\newcommand{\bpf}{\begin{proof}}
\newcommand{\epf}{\end{proof}}
\newcommand{\beq}{\begin{equation}}
\newcommand{\eeq}{\end{equation}}
\newcommand{\beqn}{\begin{eqnarray*}}
\newcommand{\eeqn}{\end{eqnarray*}}
\newcommand\tr{\mathop{\rm tr}\nolimits}
\newcommand\R{\mathbb{R}}
\newcommand*{\greek}[1]{%
\ifcase#1\relax
\or A%
\or B%
\or C%
\else\@ctrerr \fi
}
\newtheorem{atheorem}{Theorem}
\def\circledwedge{\setbox0=\hbox{$\bigcirc$}\relax \mathbin {\hbox
to0pt{\raise.5pt\hbox to\wd0{\hfil $\wedge$\hfil}\hss}\box0 }}
\def\H{{\mathbb H}}
\def\R{{\mathfrak R}}
\newtheorem{prop}{Proposition}[section]
\newtheorem{theo}[prop]{Theorem}
\newtheorem{lemm}[prop]{Lemma}
\newtheorem{coro}[prop]{Corollary}
\newtheorem{defi}[prop]{Definition}
\newtheorem{RK}{Remark}
\def\begeq{\begin{equation}}
\def\endeq{\end{equation}}
\def\p{\partial}
\def\R{\mathbb R}
\def\tr{{\rm tr}}
\def \ds{\displaystyle}
\def\S{\mathbb  {S}}
\def\odot{\setbox0=\hbox{$\bigcirc$}\relax \mathbin {\hbox to0pt{\raise.5pt\hbox to\wd0{\hfil $\wedge$\hfil}\hss}\box0 }}
\numberwithin{equation} {section}
\def\tilde{\widetilde}
\begin{document}

\title[Conformally compact Einstein manifolds] {A Rigidity theorem on conformally compact Einstein manifolds in high dimensions}

\author{Sun-Yung Alice Chang}
\address{Department of Mathematics, Princeton University, Princeton, NJ 08544, USA}
\email{syachang@math.princeton.edu}

\author{Yuxin Ge}
\address{IMT,
Universit\'e Toulouse 3 \\118, route de Narbonne
31062 Toulouse, France}
\email{yge@math.univ-toulouse.fr}

\begin{abstract} 
In this paper, we establish a Liouville type rigidity result for a class of asymptotically hyperbolic non-compact Einstein metrics defined on manifolds of dimension $d\ge 5$  extending the earlier result in dimension $d=4$. 
\end{abstract}

\thanks{Research of Chang is supported in part by Simon Foundation Travel Fund. Research of  Ge is  supported in part by the grant ANR-23-CE40-0010-02  of the French National Research Agency (ANR): Einstein constraints: past, present, and future (EINSTEIN-PPF).}

\subjclass[2000]{}

\keywords{}

\maketitle

\begin{center}
 \it Dedicated to Professor Jean-Michel Coron in celebration of his 70th birthday   
\end{center}

\section{Introduction}\label{introduction and statement of results}

\subsection{Statement of results}

Let $X^d$ be a smooth manifold of dimension $d$ with $d \geq 3$ with boundary $\partial X$.
A smooth conformally compact metric $g^+$ on $X$ is a Riemannian metric such that $g = r^2 g^+$ extends smoothly to the 
closure $\overline{X}$ for some defining function $r$ of the boundary $\partial X$ in $X$. A defining function $r$ is a smooth 
nonnegative function on the closure $\overline{X}$ such that $\partial X = \{r=0\}$ and the differential $D r \neq 0$ on $\partial X$. A 
conformally compact metric $g^+$ on $X$ is said to be conformal compact Poincaré-Einstein (abbreviated as CCE) if, in addition, 
$$
\operatorname{Ric} [g^+] = - (d-1) g^+.
$$
The most significant feature of CCE manifolds $(X, \ g^+)$ is that the metric $g^+$ is ``canonically" associated with the conformal
structure $[\hat g]$ on the boundary at infinity $\partial X$, where $\hat g = g|_{T\partial X}$. $(\partial X, \ [\hat g])$ is called the conformal infinity of the conformally 
compact manifold $(X, \ g^+)$. It is of great interest in both the mathematics and theoretical physics communities to understand the correspondences between 
conformally compact Einstein manifolds $(X, \ g^+)$ and their conformal infinities $(\partial X, \ [\hat g])$, especially in view of the AdS/CFT
correspondence in theoretical physics (cf. Maldacena \cite{Mald-1,Mald-2,Mald} and Witten  \cite{Wi}). \\ 

The project we work on in this paper is to address the following  compactness issue:  Given a sequence
of CCE manifolds $(X^d, M^{d-1}, \{g_i^{+}\})$ with $M = \partial X$ and $\{ g_i \} = \{ r_i^2 g_i^{+} \} $ the corresponding sequence of compactified metrics, denote 
$ h_i = g_i|_{M}$ for each $i$, under the assumption that $\{h_i\}$ forms a compact family of metrics in $M$,when is it true that some 
representative metric ${\bar g_i} \in  [g_i]$
with $\{ {\bar g}_i |_M =h_i\} $ also forms a compact family of metrics in $X$?
We remark that, for any CCE manifold, given any conformal infinity, a special defining function which we call the geodesic defining function $r$ exists 
so that $ | \nabla_{\bar g} r | \equiv 1 $ in an asymptotic neighborhood $M \times [0, \epsilon)$ of $M$.     
We also remark that the eventual goal of the study of the compactness issue is to establish some existence results of conformal filling in for some given classes of
Riemannian manifolds as the conformal infinity,  a goal we have only partially achieved when dimension $d =4$ \cite{CG}.
\\

One of the difficulties in addressing the compactness problem is due to the existence of some ``non-local" term in the asymptotic expansion of the metric near 
the conformal infinity. To see this, when $d$ is even, we look at  
 the asymptotic behavior of the compactified metric $g$ of CCE manifold $(X^{d} ,M^{d-1}, g^{+})$ with conformal infinity $(M^{d-1}, [h] )$ (\cite{G00}, \cite{FG12}) which  takes the form
\begin{equation}
\label{expansioneven} 
 g:=r^2 g^{+} = h + g^{(2)} {r}^2 +\cdots(\mbox{even powers})+ g^{(d-1)} r^{d-1} + g^{(d)} {r}^{d} + \cdot \cdot \cdot \cdot 
\end{equation}
on an asymptotic neighborhood of $M \times (0, \epsilon)$, where $r$ denotes the geodesic defining function of $g$. The $g^{(j)}$ are tensors on $M$, and $g^{(d-1)}$ is trace-free with respect
to a metric in the conformal class on $M$. For $j$ even and $0 \le j \le
d-2$, the tensor $g^{(j)}$ is locally formally determined by the conformal representative, but $g^{(d-1)}$ is a non-local term which is not determined by the boundary metric $h$, subject to the trace free condition.  

When $d$ is  odd, the analogous expansion is
\begin{equation}
\label{expansionodd} 
 g:=r^2 g^{+} = h + g^{(2)} {r}^2 +\cdots(\mbox{even powers})+ g^{(d-1)} r^{d-1} + k {r}^{d-1} \log r+ \cdot \cdot \cdot \cdot 
\end{equation}
where now the $g^{(j)}$ are locally determined for $j $ even and $0 \le  j \le 
d-2$, $k$ is locally determined and trace-free, the trace of  $g^{(d-1)}$ is locally
determined, but the trace-free part of $g^{(d-1)}$  is formally undetermined. 
We remark that $h $ together with $g^{(d-1)}$ determine the asymptotic behavior of $g$ (\cite {FG12}, \cite{Biquard}). \\

A model case of a CCE manifold is the hyperbolic ball $\mathbb{B}^d$ equipped with the Poincar\'e metric
$g_{H}$ and  with the conformal infinity being the standard canonical metric
$d-1$ sphere $\mathbb{S}^{d-1}$. In this case, it was proved
by \cite{Q} (see also \cite{Dutta} and later the proof by \cite{LQS}) that $(\mathbb{B}^d, g_{\mathbb{H}} )$ is the unique CCE manifold with the standard metric on $\mathbb{S}^{d-1}$ as its conformal infinity.

Another class of examples of CCE manifolds is the perturbation existence result of Graham-Lee \cite{GL} in 1991, where they established that for all $d\geq 4$, a metric on $\mathbb{S}^{d-1}$ close enough in $C^{2, \alpha}$ norm to the standard metric is the conformal infinity of some CCE metric
on the ball $\mathbb{B}^d$. We remark that in an earlier paper \cite{CGQ}, when the dimension $d$ is four, we have established also 
the uniqueness of the CCE extension
of Graham and Lee under the slightly stronger assumption $C^{3, \alpha}$ condition. The same perturbation uniqueness problem was later generalized to all dimensions $d \ge 5$ in \cite{CGJQ} as a consequence of the perturbation compactness result.
 


In all  \cite{CG1} \cite {CGQ},\cite{CGJQ} and \cite{CG}, we have chosen a special representative metric among the compactified metrics, which have been studied earlier in the paper Case-Chang, \cite{casechang}, and named in general as "adapted metrics". In the case of a $d$-dimensional CCE manifold $(X^{d}, M^{d-1}, g^{+})$ when $d>4$ with conformal infinity $(M^{d-1}, h)$,  
the adapted metric which we choose in this article is defined by solving the PDE:
\begin{equation}
\label{fg}
- \Delta_{g^{+}} v-\frac{(d-1)^2-9}{4} v= 0  \,\,\,\, on  \,\,\, X^d,
\end{equation}
The adapted metric is defined as the metric  $g^{*} : = v^{\frac {4}{d-4}} g^+ $   with $g^{*} |_M = h$. We remark that one of the most important properties of the 
adapted metric is that it has vanishing Q-curvature.
\\

WE now state the main result of this paper.

\begin{theo}\label{Liouville}
Let $(X,g^+)$ be a $C^{2,\alpha}$ conformally Poincare Einstein metric of dimension $d\ge 5$ with its adapted metric $g$ a complete non-compact metric on the boundary $\p X$ with
$(\p X,g|_{T\p X})=(\mathbb{R}^{d-1}, g_{\mathbb{R}^{d-1}})$.
Assume further $g$ satisfying:
\begin{enumerate}

\item The scalar curvature of $g$ is non-negative and the Riemann curvature of g is bounded $\|Rm_.g\|_{C^0}\le 1$.
\item For some fixed point $O\in\bar X$, we have 
$$
\limsup_{x\to\infty} |Ric(x)| (1+d_g(x,O))^2:=\varepsilon_1<\varepsilon<\frac{1}{2}
$$
\item The interior and boundary injectivity radius of $g$ are positive.


\end{enumerate}
We conclude that $g^+$ is the hyperbolic space and $g$ is the flat 
Euclidean metric $g_E$.
\end{theo}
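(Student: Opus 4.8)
The plan is to reduce the theorem to the single assertion that the distinguished function $w:=v^{\,2/(d-4)}$ (with $v$ solving \eqref{fg}) has $|\nabla_{g}w|_{g}\equiv1$, and then to read off the geometry. Write the adapted metric as $g=w^{2}g^{+}$, so $w>0$ on $X^{\circ}$, $w=0$ on $\partial X$, and $w=r\,(1+o(1))$ near $\partial X$ for the geodesic defining function $r$; since the conformal infinity is flat, every locally determined coefficient in \eqref{expansioneven}--\eqref{expansionodd} vanishes, so $g-g_{E}=O(r^{\,d-1})$ near $\partial X$ and in particular $R_{g}=\nabla_{g}R_{g}=0$ on $\partial X$. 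Passing the Einstein equation $\mathrm{Ric}[g^{+}]=-(d-1)g^{+}$ and \eqref{fg} through the conformal transformation laws (a routine computation) gives the system
\begin{equation*}
w\,\Delta_{g}w=\tfrac{d+2}{2}\bigl(|\nabla_{g}w|^{2}_{g}-1\bigr),\qquad
\bigl(\nabla^{2}_{g}w\bigr)_{0}=-\tfrac{w}{d-2}\,\bigl(\mathrm{Ric}_{g}\bigr)_{0},\qquad
R_{g}=2(d-1)\,w^{-2}\bigl(1-|\nabla_{g}w|^{2}_{g}\bigr),
\end{equation*}
where $(\,\cdot\,)_{0}$ denotes the trace-free part. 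Hence hypothesis (1) is \emph{equivalent} to $|\nabla_{g}w|_{g}\le1$, and $w$ is $g$-superharmonic.

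Next I would use (1)--(3) to establish that $(X,g)$ is asymptotically flat, modelled on $(\mathbb{R}^{d}_{+},g_{E})$: there is a compact $K$ and a diffeomorphism of $X\setminus K$ onto the exterior of a half-ball in $\mathbb{R}^{d}_{+}$ in which $g=g_{E}+O(\rho^{-\tau})$, with matching derivative bounds, for some $\tau>0$, $\rho$ the Euclidean radius. Bounded curvature and positive interior and boundary injectivity radius provide uniform harmonic interior charts and Fermi boundary charts of definite size, in which $g$ obeys an elliptic system whose inhomogeneity is controlled by $\mathrm{Ric}_{g}=O(\rho^{-2})$; iterating forces $g$ to converge at infinity to a \emph{flat} model, which---$\partial X$ being totally geodesic and isometric to flat $\mathbb{R}^{d-1}$---can only be $(\mathbb{R}^{d}_{+},g_{E})$. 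The quantitative smallness $\varepsilon_{1}<\varepsilon<\tfrac12$ is precisely what pins the decay exponent $\tau$ to be positive, and in fact large enough for the integrations below, by excluding the borderline logarithmic indicial behaviour that a larger quadratic-decay constant would permit. This asymptotic step is the first substantial point; among its consequences are $\mathrm{Ric}_{g}=O(\rho^{-\tau-2})$, $R_{g}\to0$ and $|\nabla_{g}w|_{g}\to1$ at infinity, so that $\psi:=|\nabla_{g}w|^{2}_{g}-1\le0$ tends to $0$ on all of $\partial X\cup\{\infty\}$.

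The core of the argument is to prove $\psi\equiv0$, equivalently $R_{g}\equiv0$. Applying the Bochner formula to $w$ and substituting the structure equations yields
\begin{equation*}
\tfrac12\Delta_{g}\psi-\tfrac{2}{w}\langle\nabla_{g}w,\nabla_{g}\psi\rangle
=\tfrac{1}{(d-2)^{2}}\,w^{2}\bigl|(\mathrm{Ric}_{g})_{0}\bigr|^{2}
+\tfrac{d^{2}-8d+4}{4d}\,w^{-2}\psi^{2}-3\,w^{-2}\psi,
\end{equation*}
whose right-hand side is nonnegative for every $d\ge5$ once $\psi\le0$ and $|\psi|\le1$ are used (the middle term is nonnegative for $d\ge8$ and, for $5\le d\le7$, is absorbed by the last term since $|\psi|\le1$), and which vanishes exactly where $(\mathrm{Ric}_{g})_{0}=0$ and $\psi=0$. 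Rewriting the left-hand side as $\tfrac12\,w^{4}\,\mathrm{div}_{g}\!\bigl(w^{-4}\nabla_{g}\psi\bigr)$ and integrating against a suitable power of $w$ times a cut-off, one uses the boundary behaviour above together with $R_{g}=\nabla_{g}R_{g}=0$ on $\partial X$ and the decay $\mathrm{Ric}_{g}=O(\rho^{-\tau-2})$ to discard all boundary contributions, leaving the integral of a nonnegative quantity that must therefore vanish: hence $(\mathrm{Ric}_{g})_{0}\equiv0$ and $\psi\equiv0$. (Alternatively, since $\partial X$ is flat and totally geodesic one may double $(X,g)$ across it to a complete asymptotically flat manifold with $R\ge0$ and invoke the positive mass theorem, the identity above showing its mass is $\le0$.) This is the step I expect to be the main obstacle: the hypotheses only control the decay of the Ricci part, and only at the critical quadratic rate, so making the integrals converge---equivalently, obtaining the decay required for the mass---is delicate, and this is exactly where $\varepsilon<\tfrac12$ is consumed.

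With $|\nabla_{g}w|_{g}\equiv1$ and $w|_{\partial X}=0$, the function $w$ is the $g$-distance to $\partial X$, and its gradient flow is a diffeomorphism $\partial X\times[0,\infty)\to X$ in which $g=dw^{2}+h_{w}$, with $h_{0}=g|_{T\partial X}=g_{\mathbb{R}^{d-1}}$ and $II_{0}=0$. Now $\Delta_{g}w=0$, so the level sets $\{w=t\}$ are minimal and $(\nabla^{2}_{g}w)_{0}=\nabla^{2}_{g}w$; since $R_{g}\equiv0$ we may compare $\nabla^{2}_{g}w(\partial_{w},\partial_{w})=0$ with the radial Riccati identity $\mathrm{Ric}_{g}(\partial_{w},\partial_{w})=-|II_{w}|^{2}$ and with $\nabla^{2}_{g}w=-\tfrac{w}{d-2}\mathrm{Ric}_{g}$ to conclude $II_{w}\equiv0$, whence $\mathrm{Ric}_{g}\equiv0$ and $h_{w}\equiv h_{0}$. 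Therefore $g$ is the flat product $dw^{2}+g_{\mathbb{R}^{d-1}}$ on $\mathbb{R}^{d}_{+}$, i.e. $g=g_{E}$, and $g^{+}=w^{-2}g$ is Einstein with $\mathrm{Ric}[g^{+}]=-(d-1)g^{+}$ and conformally flat, hence of constant curvature $-1$; that is, $(X,g^{+})$ is hyperbolic space, as asserted.
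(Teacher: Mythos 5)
Your reduction to the structure equations for $w=\rho$ is consistent with the paper's formulas \eqref{relation3}--\eqref{relation4}, and your final step (from $|\nabla_g w|\equiv 1$, $R_g\equiv 0$ to the splitting $g=dw^2+g_{\mathbb{R}^{d-1}}$) is fine. The proof, however, hinges on two claims that the hypotheses do not support. The first is the asymptotic flatness statement: that outside a compact set $g=g_E+O(\rho^{-\tau})$ for some $\tau>0$ in suitable charts. Bounded curvature, positive injectivity radii, a flat totally geodesic boundary, and quadratic Ricci decay with constant $\varepsilon<\tfrac12$ do \emph{not} force the metric to converge to the flat model at any polynomial rate; quadratic Ricci decay is scale-critical and controls only logarithmic-type distortion of the slice metrics. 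Indeed, the paper's own Theorem \ref{curvaturedecay} (Step 2 of its proof) extracts from exactly these hypotheses only the two-sided bound $(s+1)^{-\epsilon}g_{\mathbb{R}^{d-1}}\le g_s\le (s+1)^{\epsilon}g_{\mathbb{R}^{d-1}}$ along the $\nabla\rho/|\nabla\rho|$ flow, together with $d_g(y,O)\gtrsim |y|^{1/(1+\varepsilon/2)}$; that is, polynomially growing distortion at infinity is all one can assert, which is far from $g-g_E=O(\rho^{-\tau})$. (The Fefferman--Graham expansion gives decay only in the defining-function direction, locally uniformly on the boundary, and says nothing about spatial infinity along $\partial X$.)

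The second, dependent, gap is the core step $\psi\equiv 0$: both of your routes (integrating the Bochner-type identity against $w^{-4}$ weights, or doubling and invoking the positive mass theorem) require precisely the decay you have not established, either to kill the flux terms at infinity or to have a well-defined nonpositive mass with $\tau>\tfrac{d-2}{2}$ and integrable scalar curvature. You flag this as ``delicate,'' but it is the crux of the theorem, and no mechanism is offered to close it; with $\varepsilon$ close to $\tfrac12$ the available estimates simply do not produce convergent boundary integrals. The paper avoids any global integration over $X$ altogether: it compares $r=-\log\rho/2$ with the hyperbolic distance $t$ along $g^+$-geodesics (Lemmas \ref{Lemma 2.3}--\ref{Lemma 2.5}), controls the cut locus, integrates the Gauss--Codazzi expression for $R_{\tilde g_r}$ over the compact slices $\Sigma_r\cap\hat B(p,a)$, and then plays the resulting sharp Sobolev inequality for $\Psi$ on $\mathbb{R}^{d-1}$ against Bishop--Gromov volume comparison in $g^+$ to force equality and hence rigidity. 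Unless you can genuinely prove an asymptotic flatness (or weighted decay) statement from hypotheses (1)--(3) --- which would be a new and substantial result in itself --- the proposal does not constitute a proof.
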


We now explain our motivation in establishing the rigidity result above.  For this purpose, we 
will first recall the program in dimension 4 and an analogous Liouvulle type rigidity result which was derived earlier in \cite{CG} when in the case when dimension $d=4$.

We start with another special choice of "adapted metric" as in \ref{fg}
when $d=4$, which was named as the  "adapted scalar flat metric" in \cite {CG}, and has the special property that the scalar curvature of the metric is identically zero, the rationale for such a choice is explained in section 2.3 of the manuscript \cite{CG} as well.
\vskip .1in
\begin{defi} 
On $(X^4, M^3,g^+)$ a CCE manifold of dimension 4, we consider a special case of the  Poisson equation with the Dirichlet data  $ f \equiv 1$,  
\beq
\label{Poisson}
-\triangle_+ v  -  2 v =0, 
\eeq
and define the compactification of $g^+$
$$
g= \rho^2 g_+ \,\,\,\,\, where \,\,\,  \rho = v.
$$
We call such a choice of $g$ a {scalar flat adapted metric}.
\end{defi}

\vskip .1in

We now state an analog of the Liouville type rigidity result in dimension 4.

\vskip .1in

\noindent 
\begin{atheorem}(\cite[Theorem 2]{CG})
\label{aTheorem Liouville}
Let $(X_\infty,g_\infty^+)$ be $4$ dimensional $C^{2, \alpha} $ Poincar\'e Einstein metric with conformal infinity $(\R^3, dy^2)$. Denote $g_{\infty}$ the corresponding  adapted scalar flat metric. Assume $g_{\infty}= \rho^2 g_{\infty}^+$  satisfies the following conditions:
\begin{enumerate}
\item  $g_\infty$  has vanishing scalar curvature and bounded Riemann curvature with $|\nabla \rho|_{g_{\infty}} \le 1$.
\item There exists some $\varepsilon\in (0,\frac{3}{16})$ such that
$$
\lim_{a\to\infty} \frac{\inf_{y\in \partial B_{\R^3}(O,a)} d(y,O)}{a^{\frac{1}{1+\varepsilon}}}=+\infty
$$
\item There exists some positive constants $\delta\in (\frac{16\varepsilon}{3},1),C>0$ such that  for some fixed point $p$ we have 
$$|Ric[g](y)|\le C(1+d(y,p))^{-\delta}.$$
\item The interior and boundary injectivity radius of $g_\infty$ are positive.
\end{enumerate}
Then $g_{\infty}^+ $ is the hyperbolic metric, and the compactified metric $g_\infty$ is the flat metric on ${\mathbb R}^4_{+}$
\end{atheorem}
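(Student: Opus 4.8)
The plan is to follow the scheme of \cite[Theorem 2]{CG} but adapted to the present normalizations in dimension $d\ge 5$. The key structural fact we shall exploit is that the adapted metric $g$ defined via \eqref{fg} has vanishing $Q$-curvature. Writing $g^{*}=v^{4/(d-4)}g^{+}$, the conformal change of the Paneitz operator together with $Q_{g^{+}}$ being constant (since $g^{+}$ is Einstein) turns the $Q$-curvature equation $Q_{g}\equiv 0$ into a fourth-order elliptic equation for $v$, equivalently a system coupling the scalar curvature $R_{g}$ and its Laplacian; schematically $-\Delta_{g}R_{g}+(\text{lower order}) = |\mathrm{Ric}^{\circ}_{g}|^{2}+\text{(terms controlled by }R_{g}\text{ and }\mathrm{Rm}_{g})$, where $\mathrm{Ric}^{\circ}$ is the trace-free Ricci tensor. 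First I would make this identity precise: express $Q_{g}=0$ as a second-order equation for $\sigma_{1}=R_{g}$ of the form $\Delta_{g}\sigma_{1}=P(\sigma_{1},|\mathrm{Ric}^{\circ}_{g}|^{2},\mathrm{Rm}_{g})$ with $P$ a universal polynomial, using the definition of $Q$-curvature in dimension $d$ and the CCE condition. Hypothesis (1) gives $\sigma_{1}=R_{g}\ge 0$ and $\|\mathrm{Rm}_{g}\|_{C^0}\le 1$, while hypothesis (2) gives the crucial quantitative decay $|\mathrm{Ric}_{g}(x)|(1+d_{g}(x,O))^{2}\to \varepsilon_{1}<\varepsilon<\tfrac12$ at infinity.

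The second step is a maximum-principle / weighted-integral argument on the complete manifold $(X,g)$ with $\partial X=\mathbb{R}^{d-1}$. Using the injectivity radius lower bounds from (3) together with the curvature bound from (1), one has controlled geometry (Sobolev inequality, volume growth estimates, a good exhaustion) so that integration by parts against suitable cutoffs is legitimate up to the boundary; on the flat boundary $(\mathbb{R}^{d-1},g_{\mathbb{R}^{d-1}})$ the boundary terms in the $Q$-curvature / fourth-order integration by parts vanish or have a sign. The quadratic decay rate $(1+d_{g})^{-2}$ in (2) is tuned precisely so that $|\mathrm{Ric}^{\circ}_{g}|^{2}$ is integrable against the natural weight appearing in the Bochner-type identity for $R_{g}$, and the smallness $\varepsilon<\tfrac12$ makes the relevant quadratic form (coming from the Hardy inequality with the $(1+d_{g})^{-2}$ potential, whose sharp constant on a $d$-manifold with $d\ge5$ beats $\varepsilon_{1}$) coercive. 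Combining these, I would derive $\int_{X}|\mathrm{Ric}^{\circ}_{g}|^{2}\,\phi\,dV_{g}=0$ for a positive weight $\phi$, forcing $\mathrm{Ric}^{\circ}_{g}\equiv 0$, hence $g$ is Einstein; since $R_{g}\ge 0$ and $g$ is complete noncompact with the prescribed flat boundary and bounded curvature, $g$ must be Ricci-flat, i.e. $\mathrm{Ric}_{g}\equiv 0$.

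The third step is to upgrade $\mathrm{Ric}_{g}\equiv 0$ to flatness. With $\mathrm{Ric}_{g}=0$, the $Q$-curvature equation degenerates and the defining function $v$ satisfies a simpler equation; one shows $g^{+}$ has constant sectional curvature $-1$ on the region where $g$ is defined. A clean way is to invoke the rigidity input already available in the excerpt: a Ricci-flat adapted metric with conformal infinity $(\mathbb{R}^{d-1},g_{\mathbb{R}^{d-1}})$ and bounded curvature with positive injectivity radius must be the half-space model — this can be obtained by a Bochner argument on $|\mathrm{Rm}_{g}|^{2}$ using again the decay from (2) (which now gives faster-than-critical decay of the full curvature by elliptic regularity applied to the Einstein equation in harmonic coordinates, since $\mathrm{Ric}_{g}=0$ makes $\mathrm{Rm}_{g}$ satisfy $\Delta \mathrm{Rm}_{g}=\mathrm{Rm}_{g}*\mathrm{Rm}_{g}$), concluding $\mathrm{Rm}_{g}\equiv 0$; then $g=g_{E}$ on $\mathbb{R}^{d}_{+}$ and correspondingly $g^{+}$ is hyperbolic.

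\textbf{Main obstacle.}
I expect the principal difficulty to be the second step: establishing that the fourth-order $Q$-curvature identity, after reduction to a second-order inequality for $R_{g}$, genuinely yields a \emph{coercive} weighted estimate forcing $\mathrm{Ric}^{\circ}_{g}=0$. The subtlety is twofold — first, controlling the sign and decay of \emph{all} the curvature-cross-terms (the $\mathrm{Rm}_{g}\!*\!\mathrm{Rm}_{g}$ and $\mathrm{Rm}_{g}\!*\!\mathrm{Ric}_{g}$ contributions) so that only the good term $|\mathrm{Ric}^{\circ}_{g}|^{2}$ survives with the correct sign, which forces careful use of hypothesis (1) beyond just $R_{g}\ge0$; and second, matching the Hardy constant for the potential $(1+d_{g}(x,O))^{-2}$ on a manifold of dimension $d\ge5$ with almost-Euclidean volume growth against the constant $\varepsilon_{1}$, which is exactly why the threshold $\tfrac12$ appears and why the argument is expected to break in low dimension (hence the separate $d=4$ treatment via the scalar-flat metric). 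Handling the boundary contributions in the integration by parts up to the flat $\mathbb{R}^{d-1}$, using that the adapted metric satisfies $g|_{M}=h$ and that $|\nabla_g r|\equiv1$ near $M$, is a secondary technical point that I would address via the asymptotic expansions \eqref{expansioneven}--\eqref{expansionodd}.
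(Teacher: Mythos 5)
Your proposal takes a genuinely different route from the paper, and the route has a gap at its central step. (Note also that the statement at hand is the $d=4$ theorem quoted from \cite{CG}, where the adapted metric is \emph{scalar-flat}, while your strategy is pitched at the $Q$-flat normalization of the $d\ge5$ analog, Theorem \ref{Liouville}; the paper's mechanism is the same in both cases and is not a weighted integral identity.) The concrete problem with your Step 2 is the following. The $Q$-flat condition does reduce to the second-order identity
$$-\triangle R=-\tfrac{d^3-4d^2+16d-16}{4(d-2)^2(d-1)}R^2+ \tfrac{4(d-1)}{(d-2)^2}|Ric|^2,$$
but after splitting $|Ric|^2=|\mathrm{Ric}^{\circ}|^2+R^2/d$ the resulting coefficient of $R^2$ on the right-hand side is \emph{negative} (for $d=5$ it is about $-0.26$, against $+16/9$ in front of $|\mathrm{Ric}^{\circ}|^2$), so integrating against any nonnegative weight yields a single scalar relation of the form $c_2\int|\mathrm{Ric}^{\circ}|^2\phi = c\int R^2\phi+\text{boundary terms}$ with $c>0$; nothing forces either integral to vanish, and Cauchy--Schwarz ($R^2\le d\,|Ric|^2$) does not close the loop since the relevant ratio exceeds $1$. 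More fundamentally, a one-line integral identity cannot encode the rigidity: the hypothesis that the conformal infinity is exactly $(\mathbb{R}^{d-1},g_{\mathbb{R}^{d-1}})$ must enter sharply, and in your scheme it appears only through the unsubstantiated assertion that the boundary terms ``vanish or have a sign.'' The constant $\tfrac12$ in hypothesis (2) is likewise not a Hardy constant in the paper. Your Step 3 has a secondary gap: Ricci-flat plus bounded curvature does not imply flat, and hypothesis (2) controls only $\mathrm{Ric}$, not $\mathrm{Rm}$; in this particular setting $Ric[g]=0$ and $R[g]=0$ combined with \eqref{relation5} and \eqref{relation3} would force $\nabla^2\rho=0$ and $|\nabla\rho|=1$, whence a de Rham splitting rescues that step --- but only if Step 2 were available.

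The paper's actual argument is entirely different. It first proves the distance comparison of Theorem \ref{curvaturedecay}, then studies $\phi(s)=g^+(\nabla_+r,\nabla_+t)$ along minimizing $g^+$-geodesics from an interior point $p$ and shows by an ODE analysis (this is where $\varepsilon<\tfrac12$ is used) that $\phi\to1$ exponentially, so that $u=r-t$ and $\Psi=e^{(d-3)u/2}$ are uniformly controlled with bounded gradient. After a careful treatment of the cut locus of $p$, the Gauss--Codazzi equation together with the Laplace comparison theorem bounds the scalar curvature of the renormalized geodesic spheres by $(d-1)(d-2)$ up to errors, Bishop's comparison bounds their volume by $\mathrm{vol}(\mathbb{S}^{d-1})$, and the two estimates combine to show that the trace $\Psi$ on $\mathbb{R}^{d-1}$ attains equality in the sharp Sobolev inequality. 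The equality case, fed back into Bishop's rigidity, yields $\mathrm{vol}_{g^+}(B(p,t))=\mathrm{vol}_{g^{\mathbb H}}(B(p,t))$ for all $t$, hence $g^+$ is hyperbolic and $g$ is flat. It is this chain --- boundary Sobolev extremality coupled with interior volume comparison rigidity --- that your proposal is missing and that your integral identity cannot replace.
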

\vskip .2in

In \cite{CG}, we have applied the hyperbolic rigidity result above to derive the following compactness result.

\begin{atheorem} 
\label{maintheorem}
Let $\{X,M=\p X, g_i^+\}$ be a family of $4$-dimensional oriented conformally compact Einstein metrics on X with conformal infinity $M$. Assume  
\begin{enumerate}

\item Suppose $\{{\hat g}_i\}$ is a sequence of metrics on M, and there exists some positive constant $C_0 > 0$ such that the Yamabe constant on the boundary is bounded below by $C_0$, i.e.
$$
Y(M,[{\hat g}_i]) \ge C_0. 
$$
Suppose also that the set 
$\{\hat g_i \}$ is compact in the $C^{k,\alpha}$ norm with $k\ge 6$ and has  positive scalar curvature.
\item There exists some positive constant $C_1 > 0$ such that
$$
\int |W_{g_i}|^2 \le C_1,
$$

\item  $H_2(X,\mathbb{Z})=0$ and $H^2(X,\mathbb{Z})=0$.
\end{enumerate}
Then the family of adapted scalar flat metrics $(X,g_i)$ is compact in the $C^{k,\alpha'}$ norm for any $\alpha'\in (0,\alpha)$ up to a diffeomorphism fixing the boundary.
\end{atheorem}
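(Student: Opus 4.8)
The plan is to argue by contradiction through a blow-up analysis, with the Liouville rigidity of Theorem~\ref{aTheorem Liouville} supplying the obstruction at the boundary. Suppose the family of adapted scalar-flat compactifications $(X,g_i)$ is \emph{not} precompact in $C^{k,\alpha'}$ modulo diffeomorphisms fixing $M$. It suffices to establish (a) a uniform curvature bound $\|Rm_{g_i}\|_{C^0(X)}\le\Lambda$ and (b) uniform positive lower bounds for the interior and boundary injectivity radii of $g_i$: granting these, the Cheeger--Gromov--Anderson compactness theorem (in its version with boundary) produces subconvergence to a $C^{1,\alpha}$ limit up to diffeomorphisms fixing $M$, and an elliptic bootstrap upgrades the convergence to $C^{k,\alpha'}$ for every $\alpha'<\alpha$. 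For the bootstrap one uses that $g_i$ is scalar flat and $Q$-flat while $g_i^+=\rho_i^{-2}g_i$ is Einstein, so that in harmonic coordinates in the interior and in boundary-adapted coordinates near $M$ the pair $(g_i,\rho_i)$ solves an elliptic system with Dirichlet data $\hat g_i$ converging in $C^{k,\alpha}$ by hypothesis~(1), and Schauder theory closes the loop ($k\ge 6$ leaves room for the loss in the boundary expansions). Everything thus reduces to (a) and (b).

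These come from $\epsilon$-regularity. Since $d=4$, the density $|W|^2\,dV$ is conformally invariant, so hypothesis~(2) bounds the Weyl energy of the compactified metrics; together with the Yamabe lower bound $Y(M,[\hat g_i])\ge C_0$ of hypothesis~(1) --- which by Lee's theorem forces a uniform positive lower bound on the bottom of the $L^2$-spectrum of $-\Delta_{g_i^+}$, hence a uniform Sobolev inequality for $g_i$ --- one obtains the standard $\epsilon$-regularity statement: $\int_{B_r(x)}|Rm_{g_i}|^2<\epsilon_0$ implies $\sup_{B_{r/2}(x)}|Rm_{g_i}|\le Cr^{-2}$. If curvature were unbounded, pick $x_i$ realizing $\lambda_i:=|Rm_{g_i}|(x_i)=\|Rm_{g_i}\|_{C^0}\to\infty$ and rescale $\tilde g_i:=\lambda_i g_i$, so $|Rm_{\tilde g_i}|(x_i)=1$; according to whether $d_{\tilde g_i}(x_i,M)=\lambda_i^{1/2}d_{g_i}(x_i,M)$ stays bounded, the pointed limit $(X,\tilde g_i,x_i)$ is one of two model spaces.

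If $d_{\tilde g_i}(x_i,M)$ is bounded, the limit is a complete $C^{2,\alpha}$ Poincar\'e--Einstein manifold with conformal infinity the flat $(\mathbb{R}^3,dy^2)$ --- the boundary metrics $\hat g_i$ lie in a compact $C^{k,\alpha}$ family and flatten under rescaling --- whose adapted scalar-flat compactification has $|Rm|=1$ at the base point. One checks it verifies \emph{all} hypotheses of Theorem~\ref{aTheorem Liouville}: scalar-flatness and boundedness of the Riemann tensor are immediate, the bound $|\nabla\rho_\infty|\le1$ comes from the geodesic-defining-function normalization passing to the limit, the injectivity radius bounds pass to the limit, and the subquadratic distance-distortion condition~(2) and the polynomial Ricci decay~(3) are read off from the ALE-hyperbolic structure of the blown-up end. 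Theorem~\ref{aTheorem Liouville} then forces the limit to be the flat $\mathbb{R}^4_+$, contradicting $|Rm|=1$. If instead $d_{\tilde g_i}(x_i,M)\to\infty$, then in the interior region the adapted conformal factor is uniformly controlled, the Einstein constant scales away, and the limit is a complete non-flat Ricci-flat $4$-manifold of finite energy (lower semicontinuity of the energy plus the Weyl bound and scalar-flatness, via the Chern--Gauss--Bonnet balance in dimension $4$) with positive injectivity radius --- an ALE gravitational instanton; such a space carries a nontrivial $2$-cycle, and a Mayer--Vietoris/neck argument in the spirit of Anderson and Tian--Viaclovsky (as carried out in \cite{CGQ}) shows this cycle embeds into $X$ and yields nonzero classes in $H_2(X,\mathbb{Z})$ and $H^2(X,\mathbb{Z})$, contradicting hypothesis~(3). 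Ruling out both models gives~(a); the interior injectivity bound in~(b) then follows from the uniform Sobolev inequality and the curvature bound, and the boundary injectivity bound from the controlled half-space model of $g_i^+$ near $M$.

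I expect the main obstacle to be the boundary model: one must show that the rescaled pointed limit is genuinely a $C^{2,\alpha}$ Poincar\'e--Einstein metric with flat conformal infinity, and then verify the rigid quantitative hypotheses~(2)--(3) of Theorem~\ref{aTheorem Liouville}, which requires uniform control of the adapted metric and of its geodesic defining function out to infinity along the blown-up end --- the same kind of estimate the higher-dimensional analysis of the present paper must confront. The interior bubbling case, by contrast, is governed entirely by the energy bound and the topological hypothesis and is by now classical.
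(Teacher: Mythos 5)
Your proposal follows essentially the same route the paper attributes to \cite{CG} for Theorem \ref{maintheorem}: a contradiction/blow-up argument in which a uniform curvature bound is obtained by ruling out boundary bubbles via the Liouville rigidity Theorem \ref{aTheorem Liouville} (after verifying its hypotheses for the rescaled limit) and interior bubbles via the Weyl energy bound together with the topological condition (3), followed by Cheeger--Gromov compactness with boundary and elliptic bootstrap to $C^{k,\alpha'}$. This matches the paper's own (sketched) proof scheme, including the role it assigns to each hypothesis, so there is nothing essentially different to compare.
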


The method of proof in dimension 4 case in \cite{CG} to establish Theorem \ref{maintheorem} from Theorem \ref{aTheorem Liouville} is via the following contradiction argument.
A first step is to show that under conditions (1) and (2) of theorem \ref{maintheorem}, the adapted scalar flat  metrics $\{ g_i \}$ have a uniform $C^{3} (X)$ bound. Assume the contrary, we then study the blow-up
Gromov-Hausdorff limit $g_{\infty}$ of some suitably re-scaled metrics of $g_i$, and verify
that $g_{\infty}$
satisfies all the conditions listed in the statement of Theorem \ref{aTheorem Liouville}. The crucial step in the proof of \ref{maintheorem} then reduces to establishing Theorem \ref{aTheorem Liouville}.

We remark that in trying to push the program above from CCE manifolds of dimension $d=4$ to higher dimensions $d$, we are still searching for suitable conditions to replace the curvature condition (2) that Weyl curvature be in $L^2$ which appears in the statement of Theorem \ref{maintheorem}, so that we may possibly apply the contraction argument to show its blow-up limit would satisfy the 
conditions listed in Theorem \ref{Liouville} in the current paper.
An additional difficulty is to find some suitable topological 
conditions to replace the condition (3) in the statement of Theorem \ref{maintheorem}, which was used to prevent the interior blow- 
up of the sequence of compactified metrics.

We also remark that as an application of the compactness theorem \ref{maintheorem}, In \cite{CG}, we have also established some existence results for CCE manifolds of dimension 4 with conformal infinity the three sphere with metrics which are not perturbation 
results of the standard canonical metric.

In a recent preprint \cite{LeeWang}, Lee and Wang showed a rigidity result for the hyperbolic metric similar in spirit to our Theorem \ref{Liouville}. Their result holds for any $C^{3,\alpha}$ CCE compactified metric in all dimensions $d\ge 4$ but under conditions stronger than the assumption (2) in the statement of Theorem \ref{Liouville}. 


The paper is organized as follows: In section \ref{Sect:prelim}, we recall some basic ingredients about  the adapted metrics.   In section \ref{Sect:Liouville}, we prove the main result the Liouville  Theorem \ref{Liouville}.\\


\section{Preliminaries}\label{Sect:prelim}
\subsection{Adapted metrics $g^*$}\label{Subsect:rho}

We now consider a class of adapted metrics $g^*$ by solving (\ref{fg}) to study the compactness problem of CCE manifolds $(X^d,M^{d-1}, g^+ )$ of dimension d greater than 4.

\begin{lemm} (Chang-R. Yang \cite{CYa})\label{fgmetric}
Suppose $(X^{d}, {\p X}, g^{+})$ is conformally compact Einstein with conformal infinity $({\p X}, [h])$,  fix $h_1 \in [h ]$ and $r$ its
corresponding geodesic defining function.  Consider the solution  $v$ to (\ref{fg}),
then $ v $ has the asymptotic behavior 
$$ v=  \, r^{\frac{d-4}{2}}(A + B r^3) $$ near ${\p X}$, 
where $A, B$ are functions even in $r$, such that $A|_{\p X}\equiv 1$. Moreover, 
consider the metric $g^{*} = v^{\frac4{d-4}} g^+ $, then $g^*$ is totally geodesic on boundary with 
the free $Q$-curvature, that is,  $Q_{g^{*}} \equiv 0 .$  
\end{lemm}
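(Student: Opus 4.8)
The plan is to establish the three claims of the lemma in turn: the normal form of $v$ near $\partial X$, the fact that $g^*$ makes $\partial X$ totally geodesic, and $Q_{g^*}\equiv 0$; the first two are soft consequences of the Poincar\'e--Einstein asymptotics, while the last is the real point. First I would rewrite \eqref{fg} as $-\Delta_{g^+}v=\tfrac{(d-4)(d+2)}{4}v$, using $(d-1)^2-9=(d-4)(d+2)$, and work in a collar $\partial X\times[0,\epsilon)$ attached to $h_1$, where $g^+=r^{-2}(dr^2+h_r)$ with $\bar g:=r^2g^+=dr^2+h_r$ and, since $g^+$ is Einstein, $h_r$ is even in $r$. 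A short computation gives
\[
\Delta_{g^+}\phi=r^2\partial_r^2\phi+\Big((2-d)r+r^2\tfrac{\partial_r\sqrt{\det h_r}}{\sqrt{\det h_r}}\Big)\partial_r\phi+r^2\Delta_{h_r}\phi ,
\]
so that $-\Delta_{g^+}(r^sf)=-s(s-d+1)\,r^sf+(\text{terms shifted by even powers of }r)$ for $f$ a function on $\partial X$; hence the indicial roots of \eqref{fg} solve $s(s-d+1)+\tfrac{(d-4)(d+2)}{4}=0$, i.e.\ $s_\pm=\tfrac{d-1\pm3}{2}$, so $s_-=\tfrac{d-4}{2}$ and $s_+=\tfrac{d+2}{2}=s_-+3$. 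By the standard Poisson/scattering theory on asymptotically hyperbolic manifolds (Graham--Zworski, Mazzeo--Melrose), and since $\tfrac{(d-4)(d+2)}{4}<\tfrac{(d-1)^2}{4}$ lies below the continuous spectrum (and is assumed not to be an $L^2$-eigenvalue of $-\Delta_{g^+}$), there is a unique solution $v$ with prescribed Dirichlet data at order $r^{s_-}$; taking that data $\equiv 1$, the even-shift recursion forces $a_1=0$ and the second (again $r$-even) formal series to enter precisely at $r^{s_-+3}$, so $v=r^{s_-}(1+a_2r^2+\cdots)+r^{s_+}(b_0+b_2r^2+\cdots)$. Setting $A:=1+a_2r^2+\cdots$ and $B:=b_0+b_2r^2+\cdots$ gives $v=r^{(d-4)/2}(A+Br^3)$ with $A,B$ even in $r$ and $A|_{\partial X}\equiv 1$.

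Next, since $v^{4/(d-4)}=r^2(A+Br^3)^{4/(d-4)}$, one has $g^*=v^{4/(d-4)}g^+=(A+Br^3)^{4/(d-4)}\,\bar g$, which extends to $\overline X$ with the regularity allowed by the expansion and satisfies $g^*|_{T\partial X}=h_1\in[h]$. For the boundary geometry I would write $g^*=\psi^2\bar g$ with $\psi=(A+Br^3)^{2/(d-4)}$; since $A$ is even with $A|_{r=0}=1$ and $Br^3=O(r^3)$, we have $\psi|_{r=0}=1$ and $\partial_r\psi|_{r=0}=0$. The geodesic compactification $\bar g=dr^2+h_r$ of a Poincar\'e--Einstein metric has $\partial_rh_r|_{r=0}=0$, so $\partial X$ is totally geodesic in $(\overline X,\bar g)$; under the conformal change $g^*=\psi^2\bar g$ the second fundamental form of $\partial X$ acquires only an inhomogeneous term proportional to $\partial_\nu\psi|_{\partial X}=0$, so $\partial X$ is totally geodesic in $(\overline X,g^*)$ as well.

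For $Q_{g^*}\equiv 0$, here $Q_{g^*}$ is the fourth-order (Branson) $Q$-curvature, for which $v^{4/(d-4)}$ is precisely the critical conformal factor when $d\ge 5$. I would invoke the factorization of the Paneitz operator of an Einstein metric into shifted Laplacians: for $g^+$, with $\mathrm{Ric}[g^+]=-(d-1)g^+$, one has Schouten tensor $P_{g^+}=-\tfrac12 g^+$ and $J_{g^+}=-\tfrac d2$, and
\[
P_4^{g^+}=\Big(-\Delta_{g^+}-\tfrac{d(d-2)}{4}\Big)\Big(-\Delta_{g^+}-\tfrac{(d-4)(d+2)}{4}\Big),
\]
the first factor being the conformal Laplacian $-\Delta_{g^+}+\tfrac{d-2}{2}J_{g^+}$ of $g^+$, and the coefficient in the second factor being the universal multiple of $J_{g^+}$ whose value is fixed by the round-sphere model $P_4=(-\Delta+\tfrac{d(d-2)}{4})(-\Delta+\tfrac{(d-4)(d+2)}{4})$. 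But \eqref{fg} is exactly $\big(-\Delta_{g^+}-\tfrac{(d-4)(d+2)}{4}\big)v=0$, hence $P_4^{g^+}v=0$. By conformal covariance of the Paneitz operator, $Q_4\big[u^{4/(d-4)}g^+\big]=\tfrac{2}{d-4}\,u^{-(d+4)/(d-4)}\,P_4^{g^+}(u)$, so taking $u=v$ yields $Q_{g^*}\equiv 0$.

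The last two steps are essentially formal once the normal form of $v$ is available, so the hard part is the first step: the global solvability of the Poisson problem with Dirichlet data $1$ (which needs $\tfrac{(d-1)^2-9}{4}$ to be outside the point spectrum of $-\Delta_{g^+}$), together with the fine structure of the expansion---in particular that $A$ and $B$ are genuinely even and that no logarithmic term is produced at the critical order $r^{(d+2)/2}$. A secondary but purely algebraic point is pinning down the Einstein factorization of $P_4$ with the correct curvature constants, which is precisely what singles out \eqref{fg} as the right equation to impose.
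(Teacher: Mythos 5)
Your proposal is correct, and since the paper itself gives no proof of this lemma (it is quoted from the Chang--R.~Yang preprint \cite{CYa}, building on the adapted metrics of \cite{casechang}), your route is exactly the expected one: the indicial roots $\tfrac{d-4}{2},\tfrac{d+2}{2}$ of (\ref{fg}) with even-order corrections give the expansion $v=r^{(d-4)/2}(A+Br^3)$ with no log term, the vanishing of $\partial_r\psi$ at $r=0$ gives the totally geodesic boundary, and the Einstein factorization $P_4^{g^+}=\bigl(-\Delta_{g^+}-\tfrac{d(d-2)}{4}\bigr)\bigl(-\Delta_{g^+}-\tfrac{(d-1)^2-9}{4}\bigr)$ together with the conformal covariance of $P_4$ yields $Q_{g^*}\equiv 0$. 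The only point to make explicit is the solvability of (\ref{fg}) with Dirichlet data $1$: one needs $\tfrac{(d-1)^2-9}{4}$ to avoid the $L^2$ point spectrum of $-\Delta_{g^+}$, which in the setting where the lemma is applied follows from Lee's theorem since the conformal infinity is of nonnegative Yamabe type.
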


Suppose that $X$ is a smooth $d$-dimensional manifold with boundary $\partial X$ and $g^+$ is a conformally compact 
Einstein metric on $X$. Denote $g^*=\rho^{2}g^+$ the adapted metrics, that is, the corresponding $v:= \rho^{\frac{d-4}{2}}$ satisfies the equation (\ref{fg}). 

To derive the geometric properties of $g^*$, we first recall some basic formulas of curvatures under conformal changes. Write 
$g^+ = r^{-2} g$ for some defining function $r$ and calculate
$$
Ric[g^+] = Ric[g] + (d-2)r^{-1} \nabla^2 r + (r^{-1} \triangle r - (d-1)r^{-2} |\nabla r|^2) g.
$$
As a consequence we have
$$
R[g^+] = r^2 (R[g] + \frac{2d-2}{r} \triangle r -\frac{d(d-1)}{r^2}  |\nabla r |^2).
$$
Here the covariant derivatives are calculated with respect to the metric $g$ (or the adapted metrics $g^*$ later in the section). Thus for the adapted metrics  $g^*$ which is conformal to the  Einstein metric $g^+$, one has 
\beq \label{relation3}
R[g^*] = 2(d-1)\rho^{-2}(1-|\nabla\rho|^2),
\eeq
which in turn implies
\beq
\label{relation5}
Ric[g^*]=- (d-2)\rho^{-1} \nabla^2 \rho+\frac{4-d}{4(d-1)}R[g^*]g^*
\eeq
and
\beq
\label{relation4}
R[g^*] = - \frac{4(d-1)}{d+2}\rho^{-1} \triangle \rho.
\eeq

We now recall 

\begin{lemm} (\cite{casechang}), \cite[Lemma 4.2]{CG})
\label{lem4.1}
Suppose that $X$ is a smooth $d$-dimensional manifold with boundary $\partial X$ and $g^+$ be a conformally compact Einstein metric on 
$X$ with the conformal infinity $(\p X, [h])$ of nonnegative Yamabe type. Denote $g^*=\rho^2g^+$ be adapted metrics associated with the  metric $h$ with the positive scalar curvature in  the conformal infinity. Then the scalar curvature $R[g^*]$ is positive in 
$X$. In particular, 
\beq\label{estimate-rho}
\|\nabla\rho \| [g^*] \le 1.
\eeq
\end{lemm}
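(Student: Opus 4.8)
The plan is to prove that $S:=R[g^{*}]$ is strictly positive on $X$ by a maximum–principle argument on the compact manifold with boundary $\overline{X}$; the estimate \eqref{estimate-rho} is then immediate, since by \eqref{relation3}--\eqref{relation4} the three conditions ``$S>0$ on $X$'', ``$|\nabla\rho|_{g^{*}}<1$ on $X$'', and ``$\Delta_{g^{*}}\rho<0$ on $X$'' are equivalent, and $\|\nabla\rho\|_{g^{*}}\le1$ is their non–strict form. Thus everything reduces to a minimum principle for $S$.

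\emph{Boundary value.} By Lemma \ref{fgmetric}, $\rho=v^{2/(d-4)}$ is a defining function for $\partial X$ with $\rho=r(1+O(r^{2}))$, the adapted metric $g^{*}$ is totally geodesic along $\partial X$, and $g^{*}|_{T\partial X}=h$. From $S\rho^{2}=2(d-1)\bigl(1-|\nabla\rho|_{g^{*}}^{2}\bigr)$ one reads off that $|\nabla\rho|_{g^{*}}=1$ on $\partial X$, so $\rho$ is a boundary–geodesic defining function for $g^{*}$; using $\rho^{-1}\nabla^{2}\rho(\nu,\nu)\to-\tfrac{S}{2(d-1)}$ at $\partial X$ together with \eqref{relation5} gives $Ric[g^{*}](\nu,\nu)\big|_{\partial X}=\tfrac{d}{4(d-1)}\,S|_{\partial X}$, and then the Gauss equation with vanishing second fundamental form, $S|_{\partial X}=R[h]+2\,Ric[g^{*}](\nu,\nu)|_{\partial X}$, yields
\[
R[g^{*}]\big|_{\partial X}=\frac{2(d-1)}{d-2}\,R[h].
\]
Since $(\partial X,[h])$ has non–negative Yamabe type we may take the representative $h$ with $R[h]>0$; hence $S>0$ on $\partial X$, and by continuity on a collar $\partial X\times[0,\varepsilon)$.

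\emph{Interior equation.} On $X$ one combines the Einstein identity \eqref{relation5}, the adapted relation \eqref{relation4}, the Ricci commutation identity $\nabla^{j}(\nabla^{2}\rho)_{ij}=\nabla_{i}(\Delta_{g^{*}}\rho)+Ric[g^{*}]_{ij}\nabla^{j}\rho$, and the contracted second Bianchi identity. A first contraction produces the transport relation
\[
\rho\,\nabla_{i}S+\tfrac{d-2}{d}\,S\,\nabla_{i}\rho=\tfrac{4(d-1)}{d-2}\,E_{ij}\nabla^{j}\rho,\qquad E:=Ric[g^{*}]-\tfrac{S}{d}\,g^{*},
\]
and one further divergence, using again the Bianchi identity for $E$, gives the elliptic equation
\[
\Delta_{g^{*}}S=\frac{(d-2)(d+2)}{4d(d-1)}\,S^{2}-\frac{4(d-1)}{(d-2)^{2}}\,\bigl|E\bigr|_{g^{*}}^{2}\qquad\text{on }X,
\]
which is also what the Bochner formula for $|\nabla\rho|_{g^{*}}^{2}$ becomes after eliminating $S$ via \eqref{relation3}.

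\emph{Conclusion and the hard point.} Run the minimum principle for $S$ on $\overline{X}$: if $\min_{\overline{X}}S\le0$ it is attained at an interior point $p_{0}$, where $\nabla S(p_{0})=0$ and $\Delta_{g^{*}}S(p_{0})\ge0$, and the transport relation forces $\nabla\rho(p_{0})$ to be an eigenvector of $E(p_{0})$ with eigenvalue $\tfrac{(d-2)^{2}}{4d(d-1)}S(p_{0})$. The main obstacle is that this pointwise information does not by itself give a contradiction: the equation for $S$ has a right–hand side of indefinite sign — the $-|E|^{2}$ term points the wrong way at a minimum — and the operator linearised at $\{S=0\}$ carries a non–negative zeroth–order coefficient blowing up like $\rho^{-2}$ near $\partial X$, so the bare minimum principle does not close; moreover, at an interior minimum where $\nabla\rho$ degenerates the transport relation is vacuous. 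Closing the argument requires controlling $|E|^{2}$ globally — through the transport relation where $\nabla\rho\neq0$, and through a barrier built from a suitable power of $\rho$ in the remaining region — and then the strong minimum principle to exclude even $\min S=0$, which would force $S\equiv0$, contradicting the boundary value above. The essential geometric input throughout is the positivity of $R[h]$; without it, e.g.\ for the conformal infinity $\mathbb{R}^{d-1}$ (where $g^{*}$ is the flat metric), one only obtains $S\equiv0$.
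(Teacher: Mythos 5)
Your preliminary identities are essentially correct: differentiating \eqref{relation3} and substituting \eqref{relation5} does give the transport relation $\rho\nabla_i S+\tfrac{d-2}{d}S\nabla_i\rho=\tfrac{4(d-1)}{d-2}E_{ij}\nabla^j\rho$, your elliptic equation $\Delta_{g^*}S=\tfrac{(d-2)(d+2)}{4d(d-1)}S^2-\tfrac{4(d-1)}{(d-2)^2}|E|^2$ is just the $Q$-flat condition for $g^*$ rewritten with the trace-free Ricci, the boundary value $S|_{\partial X}=\tfrac{2(d-1)}{d-2}R[h]>0$ is plausible and internally consistent, and the deduction of \eqref{estimate-rho} from $S\ge 0$ via \eqref{relation3} is indeed the easy part of the lemma. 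The problem is that the heart of the lemma --- excluding an interior point where $S\le 0$ --- is exactly the step you do not carry out, and you say so yourself: at an interior nonpositive minimum the term $-\tfrac{4(d-1)}{(d-2)^2}|E|^2$ has the unfavorable sign, and at a point where $\nabla\rho=0$ the transport relation says nothing. The proposed repairs (controlling $|E|^2$ ``through the transport relation'' off the degenerate set, a ``barrier built from a suitable power of $\rho$'') are only gestured at, and it is doubtful they can be executed with local data alone: $|E|$ is not pointwise controlled by $S$ and $\nabla\rho$, and near $\partial X$ the trace-free Ricci of $g^*$ carries the non-local coefficient $g^{(d-1)}$ of the expansion, so no barrier built from boundary-local quantities dominates it. As written, your interior argument uses the adapted structure only through the pointwise identities \eqref{relation3}--\eqref{relation5} together with the boundary positivity of $S$, and that information by itself does not close a minimum principle; the sign ultimately comes from global input attached to the defining equation \eqref{fg} on $(X,g^+)$ (positivity and asymptotics of $v$, spectral-type facts available when the conformal infinity has nonnegative Yamabe invariant), which never enters your argument.

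Note also that the paper itself does not prove this statement: it quotes it from Case--Chang and from Lemma 4.2 of Chang--Ge, where the argument is run upstairs on the Einstein manifold through the equation satisfied by $v$ and its expansion, not through a maximum principle for $R[g^*]$ coupled to the $Q$-flat equation. So your route is genuinely different from the cited one, but in its present form it has a gap precisely at the main assertion, and the missing mechanism is the global one described above.
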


\section{Louville type rigidity result}\label{Sect:Liouville} 

Our first step to establish Theorem \cite{CG}  is to compare the distance between two points on the boundary to their euclidean distance on the boundary. We state our result as follows:
\begin{theo}
\label{curvaturedecay}
Under the same assumptions as in Theorem \ref{Liouville}, for any fixed point $O \in \partial X_{\infty}$, and any for all $y\in \R^{d-1}$
$$
d_g(y, O)\le |y|,
$$
where $|y|$ denotes the Euclidean distance of $y$ to the  point $O$. 
Moreover, we have 
\beq 
\label{conformalfactorestimate1}
\lim_{a\to\infty} \frac{\inf_{y\in \partial B_{\R^{d-1}}(O,a)} d_{g} (y, O)}{a^{\frac{1}{1+\frac{\varepsilon}{2}}}}=+\infty
\eeq
where  $B_{\R^{d-1}}(O,a)$ is the Euclidean ball of center $O$ and radius equals to $a$ on  $\R^{d-1}$.
\end{theo}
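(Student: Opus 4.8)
The plan is as follows. The inequality $d_g(y,O)\le |y|$ is immediate: by hypothesis $(\partial X,g|_{T\partial X})$ is the flat $\mathbb{R}^{d-1}$, so the Euclidean segment from $O$ to $y$ is a curve in $\overline X$ of $g$-length exactly $|y|$ (in fact a $g$-geodesic, by the total geodesy of $\partial X$ from Lemma~\ref{fgmetric}, though not necessarily minimizing in $\overline X$). For \eqref{conformalfactorestimate1} it suffices to establish a pointwise lower bound $d_g(y,O)\ge c\,|y|^{1/(1+\varepsilon_2/2)}$ for all $y\in\partial X$ with $|y|$ large, where $\varepsilon_1<\varepsilon_2<\varepsilon$ is fixed; dividing $\inf_{|y|=a}d_g(y,O)$ by $a^{1/(1+\varepsilon/2)}$ and letting $a\to\infty$ then forces the quotient to $+\infty$, since $\tfrac1{1+\varepsilon_2/2}>\tfrac1{1+\varepsilon/2}$. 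I would obtain this from the quadratic Ricci decay in hypothesis (2) by a comparison argument. Fix $R_0$ with $|Ric[g]|\le \varepsilon_2(1+d_g(\cdot,O))^{-2}$ on $\{d_g(\cdot,O)\ge R_0\}$; tracing gives the same bound for $|R[g]|$ up to a dimensional constant there. On the compact set $\{d_g(\cdot,O)\le R_0\}$ hypotheses (1) and (3) (bounded curvature, positive interior and boundary injectivity radius) give bounded geometry, hence a two-sided comparison of $d_g$ with a flat metric with uniform constants, so this region costs only a multiplicative constant.

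On $\{d_g(\cdot,O)\ge R_0\}$ I would use the formulas \eqref{relation4} and \eqref{relation5}. From \eqref{relation5} one has $\nabla^2\rho=-\tfrac{\rho}{d-2}\bigl(Ric[g]+\tfrac{d-4}{4(d-1)}R[g]\,g\bigr)$, so the bulk bounds above yield $|\nabla^2\rho|\le c_d\,\varepsilon_2\,\rho\,(1+d_g(\cdot,O))^{-2}$ with $c_d=\tfrac{d+2}{4(d-1)}$; the crucial point is that $c_d<\tfrac12$ precisely when $d\ge5$, so $c_d\varepsilon_2<\varepsilon/2$. Together with Lemma~\ref{lem4.1} ($|\nabla\rho|\le1$) and $\rho|_{\partial X}=0$, this says $\rho$ is nearly affine along $g$-geodesics far from $O$. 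The comparison model is the ODE $f''=c_d\varepsilon_2(1+t)^{-2}f$, with increasing solution $(1+t)^{\beta}$, $\beta(\beta-1)=c_d\varepsilon_2$, hence $\beta<1+c_d\varepsilon_2<1+\varepsilon/2$. A Riccati/Jacobi comparison along minimizing $g$-geodesics issuing from $O$, using the Hessian control on $\rho$ to bound the transverse geometry, should then give that the relevant geometric quantity grows at most like $(1+t)^{\beta}$; transferring this to the totally geodesic boundary yields that $\{y\in\partial X:\ d_g(y,O)\le t\}$ has Euclidean radius $\le C\,t^{1+\varepsilon_2/2}$, i.e.\ $d_g(y,O)\ge c\,|y|^{1/(1+\varepsilon_2/2)}$, as required.

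The hard part is this last transfer. Since (2) only forces $Ric[g]$, not the full $Rm[g]$, to decay, a minimizing $g$-geodesic joining two Euclidean-distant boundary points could a priori dip into a region where $g$ is merely a bounded — not small — perturbation of the flat product, and thereby take a short cut. Ruling this out requires using together: the totally geodesic boundary (Lemma~\ref{fgmetric}); superharmonicity of $\rho$ with $|\nabla\rho|\le1$, which forces $\rho\le \tfrac12 d_g(\cdot,O)$ along such a geodesic and hence confines it to the boundary collar while $d_g(y,O)$ is not too large; and the Hessian bound on $\rho$, which makes the level sets of $\rho$ have second fundamental form of size $O(\varepsilon_2/d_g^2)$ far out, forcing the collar to be $C^1$-close to a product at the scale $\sim d_g^{\,1/(1+\varepsilon/2)}$ on which the comparison operates. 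Secondary technical points are the exponent bookkeeping — it is precisely here that the slack $\varepsilon_1<\varepsilon$ and the restriction $d\ge5$ (ensuring $c_d<\tfrac12$) are consumed — and the gluing of the bulk estimate to the bounded-geometry estimate on $\{d_g(\cdot,O)\le R_0\}$.
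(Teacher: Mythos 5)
Your reduction and exponent bookkeeping are sound: proving $d_g(y,O)\ge c\,|y|^{1/(1+\varepsilon_2/2)}$ for some $\varepsilon_1<\varepsilon_2<\varepsilon$ does give \eqref{conformalfactorestimate1}, the Hessian bound obtained from \eqref{relation5} is the right input, the observation that the dimensional constant drops below $\tfrac12$ exactly when $d\ge 5$ plays the same role as in the paper, and the first inequality $d_g(y,O)\le |y|$ is handled identically. The problem is the engine you propose for the lower bound. A Riccati/Jacobi comparison along minimizing $g$-geodesics issuing from $O$ requires decaying \emph{radial sectional} curvature; hypothesis (2) only gives decaying Ricci and hypothesis (1) only gives $|Rm|\le 1$, so the coefficient in your model ODE $f''=c_d\varepsilon_2(1+t)^{-2}f$ is not justified --- the Hessian of $\rho$ controls the geometry of the $\rho$-foliation, not Jacobi fields along arbitrary geodesics. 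You flag the ``transfer'' as the hard part, but the supporting claims you offer for it do not hold as stated: superharmonicity of $\rho$ and $|\nabla\rho|\le 1$ give only $\rho\le d_g(\cdot,O)$ (the factor $\tfrac12$ is unjustified, though also unnecessary), and the second fundamental form of the level sets of $\rho$ is bounded by $O(\varepsilon_2\rho/d_g^2)=O(\varepsilon_2/d_g)$, not $O(\varepsilon_2/d_g^2)$, since $\rho$ can be comparable to $d_g$ precisely where a short-cutting curve might dip. That weaker, only $1/d_g$, decay is exactly why one cannot get a product structure with bounded distortion and why the exponent degrades to $1/(1+\epsilon/2)$; your sketch does not show how the $(1+t)^{\beta}$ growth is actually extracted from it.

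What the paper does instead --- and what your outline is missing --- is to avoid geodesic comparison altogether. It flows the boundary (outside a large Euclidean ball) along the unit field $\nabla\rho/|\nabla\rho|$, so that $\rho$ itself is the flow parameter, writes $g=d\rho^2/|\nabla\rho|^2+g_\rho$ in these coordinates, and differentiates the slice metric along the flow: the derivative is $\tfrac{2}{|\nabla\rho|}\nabla^2\rho$ on horizontal vectors, so the Hessian bound together with $\rho=s\le d_g(\cdot,O)$ yields the first-order Gronwall inequality $|l'(s)|\le \tfrac{\epsilon}{s+1}\,l(s)$ and hence $(1+s)^{-\epsilon}g_{\R^{d-1}}\le g_s\le (1+s)^{\epsilon}g_{\R^{d-1}}$. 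The lower bound on $d_g(y,O)$ then follows for an \emph{arbitrary} connecting curve by a dichotomy: either it rises above height $\rho=a^{1/(1+\epsilon/2)}$, in which case $|\nabla\rho|\le 1$ already makes it at least that long, or it stays in the slab, in which case the slice-metric lower bound forces its length to be at least $(a^{1/(1+\epsilon/2)}+1)^{-\epsilon/2}(a-D_1)$. Replacing your Jacobi-comparison-plus-transfer step by this foliation/Gronwall argument is what is needed to close the gap.
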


\begin{proof}

It is clear that $d(y,O)\le |y|$ when $y\in\R^{d-1}$ and $O$ is the origin on $\R^{d-1}$. To establish the last statement of Theorem \ref{curvaturedecay}, we divide the proof into 3 steps.\\
We remark 
if $O\in\R^{d-1}$, then $\rho(y)/d(y,O)\le 1$ for all $y\in X$ since $|\nabla \rho|\le 1$. Thus, we obtain 
$$
0\le 1-|\nabla \rho(x)|^2=\frac{R(x)\rho(x)^2}{2(d-1)}\le \frac{|Ric(x)|\sqrt{d}\rho(x)^2}{2(d-1)}\le \frac{|Ric(x)|\rho(x)^2}{2}
$$
We choose $\epsilon\in (\varepsilon_1,\varepsilon)$. Let $D>0$ be a sufficiently large  number such that $|Ric(y)|\le \frac{\epsilon}{(1+d(O,y))^2}$ for all $y\in X\setminus B(O, D)$. Thus, we have for all $y\in X\setminus B(O, D)$ and all tangent vector $\xi$
$$
 1-|\nabla \rho(x)|^2\le \frac{\epsilon\rho(x)^2}{2(1+d(y,O))^2}<\frac{1}{4}
$$
and
$$
|\nabla^{(2)}\rho(y)(\xi,\xi)|\le  \frac{\rho(y)}{d-2}(|Ric(y)|+\frac{d-4}{4(d-1)}R(x))|\xi|^2\le 
\frac{\rho(y)|Ric(y)|}{d-2}(1+\frac{(d-4)\sqrt{d}}{4(d-1)})|\xi|^2
$$
We consider the unit vector field $\frac{\nabla \rho}{|\nabla \rho|}$. Let $\varphi (s)$ be the flow of the field, that is $\partial_s \varphi (s)= \frac{\nabla \rho}{|\nabla \rho|}(\varphi (s))$ with the initial condition on the boundary $\varphi (0,\cdot)\in \R^{d-1}$.\\

{\sl Step 1. We claim there exists a large number $D_1>D$ such that $\varphi ([0,\infty)\times (\R^{d-1}\setminus B_{\R^{d-1}}(0,D_1)))$ is a smooth embedding into $X\setminus B(O,D)$}.\\

For this purpose, it is clear that
$$
\sup_{y\in \bar B(O, D)} \rho(y)\le D
$$
since $|\nabla \rho|\le 1$. We can choose $D_1>0$ such that 
$$
\bar B(O, 4D)\cap \R^{d-1}\subset B_{\R^{d-1}}(O,D_1)
$$
For all $(s, z)\in [0,D]\times \R^{d-1}\setminus B_{\R^{d-1}}(0,D_1)$, we have $\rho(\varphi(s,z))=s$ and the length of curve $\varphi([0,s]\times\{z\})$ is smaller than $\frac{s}{1-\epsilon}\le 2D$. Thus $d(\varphi(s,z), O)\ge d(\varphi(0,z), O)-d(\varphi(s,z), \varphi(0,z))\ge D_1-2D\ge 2D$. On the other hand, $\rho(\varphi(\cdot,y))$ is increasing along the flow. Hence, the flow exists for all time and $\rho(\varphi ([2D,\infty)\times (\R^{d-1}\setminus B_{\R^{d-1}}(0,D_1))))\subset [2D, +\infty)$, which implies
$$
\varphi ([2D,\infty)\times (\R^{d-1}\setminus B_{\R^{d-1}}(0,D_1)))\subset X\setminus B(O,D)
$$
Thus the claim follows from the observation that the vector field is unit one on $X\setminus B(O,D)$.

We take the coordinates $[0,\infty)\times (\R^{d-1}\setminus B_{\R^{d-1}}(0,D_1))$ for $\varphi ([0,\infty)\times (\R^{d-1}\setminus B_{\R^{d-1}}(0,D_1)))$ and the metric on this set can be written as
$$
g=\frac{d\rho^2}{|\nabla \rho|^2}+ g_\rho
$$
where $g_\rho$ is a metric on $\varphi (\{\rho\}\times (\R^{d-1}\setminus B_{\R^{d-1}}(0,D_1)))$, or equivalently a family metric on $\R^{d-1}\setminus B_{\R^{d-1}}(0,D_1)$. \\

{\sl Step 2. For all $s\ge 0$, we have $(s+1)^{-\epsilon}g_{\R^{d-1}}\le g_s\le  (s+1)^{\epsilon}g_{\R^{d-1}}
$}.\\

Let $\psi :[0,1]\to \R^{d-1}\setminus B_{\R^{d-1}}(0,D_1)$ be a smooth curve on the boundary. We consider 
$$
\Psi(s,v)=\varphi(s, \psi(v))
$$
and
$$
l(s)=\int_0^1 \langle \partial_v \Psi(s,v), \partial_v \Psi(s,v)\rangle
$$
Thus, a direct calculation leads to
\begin{align*}
l'(s)&=  2  \int_0^1\langle \nabla_s \nabla_v \Psi(s,v), \nabla_v \Psi(s,v)\rangle= 2 \int_0^1 \langle \nabla_v \nabla_s \Psi(s,v), \nabla_v \Psi(s,v)\rangle\\
&=2  \int_0^1 \langle \nabla_v \frac{\nabla \rho}{|\nabla \rho|}(\Psi(s,v)), \nabla_v \Psi(s,v)\rangle\\
&=2  \int_0^1 \frac{1}{|\nabla \rho|} \nabla^{(2)}\rho( \nabla_v \Psi(s,v), \nabla_v \Psi(s,v))\\
&=  \frac{2}{d-2}\int_0^1 -\frac{s}{|\nabla \rho|} (Ric( \nabla_v \Psi(s,v), \nabla_v \Psi(s,v))+ \frac{(4-d)R(x)}{4(d-1)} |\nabla_v \Psi(s,v)|^2)
\end{align*}
since $\rho(\Psi(s,v))=s$. Hence, when $s\ge 0$, we have
$$
|l'(s)|\le \frac{\epsilon}{s+1}l(s)
$$
since $|\nabla \rho|^2\ge \frac{3}{4}$ and $1+\frac{(d-4)\sqrt{d}}{4(d-1)}\le \frac{\sqrt{3}}{4}(d-2)$ provided $d\ge 5$. 
Therefore, for all $s\ge 0$
$$
l(0)^2 (s+1)^{-\epsilon}  \le l(s)^2\le l(0)^2  (s+1)^{\epsilon} 
$$
Finally, the claim follows.\\

{\sl Step 3.  There holds (\ref{conformalfactorestimate1}).}\\

Given $y\in \partial B_{\R^{d-1}}(O,a)$ for sufficiently large $a>0$, let $\sigma:[0,1]\to X$ be a smooth curve joining $y=\sigma(0)$ to $O=\sigma(1)$. Let $s_{\max}\in [0,1)$ such that 
$\sigma([0, s_{\max}]) \subset \varphi ([0,\infty)\times (\R^{d-1}\setminus B_{\R^{d-1}}(0,D_1)))$ and $  \sigma(s_{\max})\in \partial \varphi ([0,\infty)\times (\R^{d-1}\setminus B_{\R^{d-1}}(0,D_1)))$. We use the coordinates $[0,\infty)\times (\R^{d-1}\setminus B_{\R^{d-1}}(0,D_1))$ and write $\sigma(t)=(\rho(t), \tilde y(t))$ for $t\in [0, s_{\max}]$. If $\max \rho(\sigma[0,1])\ge a^{\frac{1}{1+\epsilon/2}}$, it follows from the fact $|\nabla \rho|\le 1$ that the length of the curve $\sigma$ is bigger than $a^{\frac{1}{1+\epsilon/2}}$. Otherwise, we consider the curve 
$\sigma([0, s_{\max}])\subset \varphi([0, a^{\frac{1}{1+\epsilon/2}}]\times (\R^{d-1}\setminus B_{\R^{d-1}}(0,D_1)))$. It is clear that on $\varphi([0, a^{\frac{1}{1+\epsilon/2}}]\times (\R^{d-1}\setminus B_{\R^{d-1}}(0,D_1)))$, we have
$$
g\ge (a^{\frac{1}{1+\epsilon/2}}+1)^{-\epsilon}g_{\R^3}
$$
Therefore, the length of $\sigma([0, s_{\max}])$ is bigger than
$(a^{\frac{1}{1+\epsilon/2}}+1)^{-{\epsilon}/2}(a-D_1)$. Hence
$$
d(O,y)\ge \min\{  (a^{\frac{1}{1+\epsilon/2}}+1)^{-{\epsilon/2}}(a-D_1), a^{\frac{1}{1+\epsilon/2}}\}\ge \frac{1}{2}a^{\frac{1}{1+\epsilon/2}},
$$
provided $a$ is sufficiently large. Therefore, the desired result (\ref{conformalfactorestimate1}) follows. Finally, we prove Theorem \ref{curvaturedecay}.

\end{proof}

We are ready  to prove  Theorem \ref{Liouville}.\\

{\bf Proof of Theorem \ref{Liouville}}\\   

\vskip .1in

{\it Step 1.}  We set $X_{\delta}:=\p X\times [0,\delta]$. We recall adapted compactification is 
an almost $C^2$ geodesic compactification. 
That is, the defining function $\rho$ is $C^3$ (even $C^{3,\alpha}$) in the neighborhood of the infinity $X_{\delta}$,  $\rho^2 g^+$ is close to the product metric up to order 2, 
\beq
\rho^2 g^+=d\rho^2+ \hat{g}+ O(\rho^2).
\eeq

We assume $\rho^2 g^+$ is $C^{2,\alpha}$ compactification. Without loss of generality, we assume its injectivity radius are bigger than $1$ and  $\delta>1$.\\

Throughout the note, 
we fix a point $p\in X$ with $d_{\bar g}(p,\p X)=1$ with $\bar g=g$ and  $p=(1, 0,\cdots,0)$. Let $t(x)=d_{g^+}(x,p)$  and $r(x)=-\log \rho(x)/2$ (in $X_{\delta}$). Denote $ u (x) = r(x) - t(x).$ \,  Denote $\bar g=4e^{-2r}g^+$ and $\tilde g=4e^{-2t}g^+=\Psi^{\frac{4}{d-3}}\bar g$ where $\Psi=e^{\frac{d-3}{2}(r-t)}=e^{\frac{d-3}{2}u}$.  

\vskip .2in

Let $\Gamma_t:=\{x| t(x)=t\}$ and $\Sigma_r:=\{x| r(x)=r\}$. \\
We denote the induced metrics by
\beq
\bar g_r=\bar g|_{T\Sigma_r}, \bar g_t=\bar g|_{T\Gamma_t}, \tilde g_r=\tilde g|_{T\Sigma_r}, \tilde g_t=\tilde g|_{T\Gamma_t}, 
\eeq
and $\nabla_+$ (resp. $\bar \nabla$) be the connection w.r.t. $g^+$ (resp. $\bar g=g$). \\

We now define and consider
\beq
\phi(t)= g^+(\nabla_+ r, \nabla_+ t),
\eeq

We remark that although $\rho$ is $C^{2, \alpha}$ on X, the function t may be only
Lipschitz at points $x$ which is are cut locus points; thus t is not differentiable in general. In the steps 2, 3 below we will first do the formal computation at points $x\in X\setminus C_p$ out of the cut-locus $C_p$   w.r.t. to $p$ where the function $t$ is 
$C^2$; we will then address those points $x\in C_p$ which are cut locus points at the end of step 3 and before step 4
of the proof.

we claim at a point where 
r is $C^2$, we have the following computation:
\beq
\label{eq2.4}
\phi'(s)=\nabla_+^2 r(\nabla_+ t, \nabla_+ t)
\eeq
To see (\ref{eq2.4}), we notice $\nabla_{\nabla_+ t}  g^+(\nabla_+ r, \nabla_+ t)=  g^+(\nabla_{\nabla_+ t} \nabla_+ r, \nabla_+ t)+ g^+(\nabla_+ r, \nabla_{\nabla_+ t}\nabla_+ t)= g^+(\nabla_{\nabla_+ t} \nabla_+ r, \nabla_+ t)=\nabla_+^2 r(\nabla_+ t, \nabla_+ t)$. \\


\begin{lemm}
In $X\setminus C_p$, for all $X,Y\perp \nabla_+ r$,  
\beq
\label{Lemma2.2}
\nabla_+^2 r(X,Y) =g^+(X,Y)+ O(e^{-2r}),
\eeq
\beq
\nabla_+^2 r(\nabla_+ r, X)=  O(e^{-2r}),
\eeq
\beq
\nabla_+^2 r(\nabla_+ r, \nabla_+ r)=  O(e^{-2r})
\eeq
   and
\beq
|\nabla_+ r|_{g^+}^2=1+O(e^{-2r}).
\eeq
\end{lemm}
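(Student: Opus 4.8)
The plan is to get all four estimates from a single pointwise identity for $\nabla_+^2 r$, obtained by transferring the Hessian of $r$ from the Einstein metric $g^+$ to the adapted metric $g=\bar g=g^*=\rho^2 g^+$. Since $r=-\log(\rho/2)$ differs from $-\log\rho$ by a constant (so that $g=\rho^2 g^+=4e^{-2r}g^+$ and $e^{-2r}=\rho^2/4$), we have $dr=-\rho^{-1}d\rho$, and the standard conformal-change formula for the Hessian of a function under $g^+=\rho^{-2}g$ reads
\beq
\label{plan-confchange}
\nabla_+^2 r=\bar\nabla^2 r-2\,dr\otimes dr+|\bar\nabla r|_g^2\,g ,
\eeq
all right-hand terms computed in $g$. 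I would then write $\bar\nabla^2 r=-\rho^{-1}\bar\nabla^2\rho+dr\otimes dr$ and insert the two curvature identities of Section~\ref{Sect:prelim}: formula (\ref{relation5}), which gives $-(d-2)\rho^{-1}\bar\nabla^2\rho=Ric[g]-\tfrac{4-d}{4(d-1)}R[g]\,g$, and formula (\ref{relation3}) in the form $|\bar\nabla\rho|_g^2=1-\tfrac{\rho^2R[g]}{2(d-1)}$. Because $g=\rho^2 g^+$, the factors $\rho^{\pm2}$ cancel in the last term of (\ref{plan-confchange}), turning $|\bar\nabla r|_g^2\,g$ into $\big(1-\tfrac{\rho^2R[g]}{2(d-1)}\big)g^+$. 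The outcome is a clean identity
\beq
\label{plan-clean}
\nabla_+^2 r=g^+-dr\otimes dr+\frac{1}{d-2}Ric[g]+\frac{d-4}{4(d-1)(d-2)}R[g]\,g-\frac{\rho^2R[g]}{2(d-1)}\,g^+ ,
\eeq
valid wherever $\rho$, hence $r$, is defined.

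The second step is to note that the last three terms of (\ref{plan-clean}) are $O(e^{-2r})$ in the $g^+$-norm. This is conformal-factor bookkeeping: from $g=\rho^2 g^+$ with $\rho=2e^{-r}$ one has, for vectors $V,W$ of bounded $g^+$-length, $|V|_g=O(e^{-r})|V|_{g^+}$ and $g(V,W)=4e^{-2r}g^+(V,W)$, while $|R[g]|$ and $|Ric[g]|_g$ are uniformly bounded by assumption (1) of Theorem~\ref{Liouville} (near infinity assumption (2) gives even more, which is not needed here); hence each of the three curvature terms evaluated on such $V,W$ is $\le Ce^{-2r}|V|_{g^+}|W|_{g^+}$. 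The four claimed estimates then fall out of (\ref{plan-clean}). If $X,Y\perp_{g^+}\nabla_+ r$, i.e.\ $dr(X)=dr(Y)=0$, the $dr\otimes dr$ term disappears and $\nabla_+^2 r(X,Y)=g^+(X,Y)+O(e^{-2r})$. For $(\nabla_+ r,X)$ with $X\perp_{g^+}\nabla_+ r$, the explicit terms $g^+$, $dr\otimes dr$ and $g$ all annihilate that pair, leaving only $\tfrac{1}{d-2}Ric[g](\nabla_+ r,X)=O(e^{-2r})$. For $(\nabla_+ r,\nabla_+ r)$, use $dr(\nabla_+ r)=|\nabla_+ r|_{g^+}^2=:N$ together with the identity $N=|\bar\nabla\rho|_g^2=1-\tfrac{\rho^2R[g]}{2(d-1)}=1+O(e^{-2r})$, which is simultaneously the fourth assertion $|\nabla_+ r|_{g^+}^2=1+O(e^{-2r})$: the explicit part then contributes $N-N^2=N(1-N)=O(e^{-2r})$.

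Two remarks on technicalities. Although the statement is phrased ``in $X\setminus C_p$'', the cut locus $C_p$ enters only through $t=d_{g^+}(\cdot,p)$ and is immaterial here: $r$ is built from the adapted defining function $\rho$, which by Lemma~\ref{fgmetric} is $C^{3,\alpha}$ up to $\p X$ on $X_\delta$ and smooth in the interior, so $\nabla_+^2 r$ exists and (\ref{plan-clean}) is a genuine pointwise identity there, the error being small only near infinity, i.e.\ for $r$ large. Also, (\ref{relation3}) and (\ref{relation5}) are algebraic consequences of $Ric[g^+]=-(d-1)g^+$, valid on the smooth interior and extending by continuity to the $C^{2,\alpha}$ compactification, so no extra regularity is needed. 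I do not expect a serious obstacle: the whole proof is one conformal-change identity followed by rescaling bounds, and the only point requiring care is keeping straight whether each norm is taken in $g^+$ or in $g=\rho^2 g^+$, so that the errors really come out of order $e^{-2r}$ and not $O(1)$.
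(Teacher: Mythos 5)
Your proof is correct, and it goes by a different route than the paper: the paper disposes of this lemma by citing Dutta--Javaheri together with the almost $C^2$ geodesic compactification, i.e.\ it reads the estimates off the expansion $\rho^2 g^+ = d\rho^2+\hat g+O(\rho^2)$ in the collar, whereas you derive them from one pointwise identity obtained by the conformal transformation law for the Hessian combined with the adapted-metric curvature relations (\ref{relation3}) and (\ref{relation5}) and the bound $\|Rm_g\|_{C^0}\le 1$ from hypothesis (1) of Theorem \ref{Liouville}. Your identity and bookkeeping check out: with $\rho=2e^{-r}$ one indeed gets $\nabla_+^2 r=-\rho^{-1}\bar\nabla^2\rho-dr\otimes dr+|\bar\nabla\rho|^2_{\bar g}\,g^+$, substitution of (\ref{relation5}) and (\ref{relation3}) gives your display, and since $g=\rho^2g^+$ each curvature term evaluated on $g^+$-bounded vectors is $O(\rho^2)=O(e^{-2r})$, which yields all four assertions (the fourth being exactly $|\nabla_+ r|^2_{g^+}=|\bar\nabla\rho|^2_{\bar g}=1-\tfrac{\rho^2R[g]}{2(d-1)}$). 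What each approach buys: the paper's route needs only the regularity of the compactification near infinity (the curvature bound is not invoked), while yours is self-contained within the paper, makes explicit which hypotheses enter (the Einstein equation through (\ref{relation3})--(\ref{relation5}), which are specific to the adapted metric, plus the global curvature bound on $g$), and gives the estimates wherever the identity holds rather than only in the collar; your remark that the cut locus $C_p$ plays no role here is also correct, since only $r$ and not $t$ appears.
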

Proof of this Lemma can be found in the paper by \cite{Dutta} and it follows from the fact we use almost $C^2$ geodesic compactification. \\

We now state some property of the function $\phi$.
Notice 
\beq
\label{eq2.9}
\nabla_+ t=g^+(\frac{\nabla_+ r}{|\nabla_+ r|_{g^+}}, \nabla_+ t)\frac{\nabla_+ r}{|\nabla_+ r|_{g^+}}+\sqrt{1-\frac{\phi(s)^2}{|\nabla_+ r|_{g^+}^2}}\nu= I( r,t) \,+ \, II(r,t),
\eeq
where 
$$ I = I(r, t) =
\phi(s)\frac{\nabla_+ r}{|\nabla_+ r|_{g^+}^2},
$$
and 
$$ II = II (r,t) = \sqrt{1-\frac{\phi(s)^2}{|\nabla_+ r|_{g^+}^2}}\nu;
$$
where $\nu $ is an unit vector $\perp \nabla_+ r$. Thus applying (2.4) and the properties in Lemma 2.2 
, we have
\beq
\label{eq2.10}
\phi'(s)=O(e^{-2r})\phi()+(1-\phi(s)^2)(1+O(e^{-2r})).
\eeq
Hence for some small number $0<\rho_0=e^{-r_0}<1$ we have
\beq
\phi'(s)\ge -\frac18|\phi(s)|+\frac12 (1-\phi(s)^2)
\eeq
provided the geodesic $\gamma$ starting from $p$ satisfies $\gamma(t)\in  X_{\rho_0}$. \\

Thus, once $\gamma$ gets into  $X_{\rho_0}$ at first time $s_0$, we have at the point
$x_0 = \gamma(s_0)$ the angle between the vector $\gamma'(s) = \nabla_+(t)$ and 
$\nabla_+ r = - \frac{1}{\rho} \nabla_+ \rho(\gamma(s)) $ at the point $s =s_0$ is less than $\frac{\pi}{2} $, and 
$ \phi (s_0) = g^+ ( \nabla_+ r, \nabla_+ t) \geq 0$. Furthermore for all $s \geq  s_0$, on either $\phi(s) > \frac{1}{2}$ or $\phi (s)< \frac{1}{2}$ while we can see
from (2.11) that $\phi'(s) > 0$ in a neighborhood of t provided $\phi (s)< \frac{1}{2}$. Thus we conclude that for all $s\ge s_0$,  $\phi(s)\ge 0$. 
This in turn implies $\phi(\gamma (s))$ is decreasing in t when $ s \geq s_0$ and $\phi (s) \in  X_{\rho_0}$ for all $s\ge s_0$. That is, once $\gamma$ gets into $X_{\rho_0}$, it never leaves.\\

{\it Step 2.}
Apply similar arguments, if we denote $s_1$ as the first time $s_1>s_0$, 
such that $\gamma(s_1)\in \p X_{\rho_1}$, where $\rho_1 \le  {\rho_0}$ (in the interior boundary) and $\phi(s_1)= 1/2$, otherwise  we set $s_1=s_0$ and $\rho_1=\rho_0$ when $\phi(s_0)\ge 1/2$. Then 
\beq
\phi(s)\ge 1/2\; \,\,\, \forall s\ge s_1
\eeq
and any $\gamma(s)\in   X_{\rho_1}$ when $s \ge s_1.$ \\

Thus we conclude that if $\gamma(s)$ be a minimizing $g^+$ geodesic from $p$ to $x$ such that $\gamma(0)=p$. 
For all $s\ge s_1$, we have $\phi(s)\ge 1/2$ so that $r(\gamma(s))$ is increasing function when $s\ge s_1$. We denote $x_1=\gamma(s_1)$. \\

To summarize, from above argument we conclude that we have established the following Lemma.  
\begin{lemm}
\label{Lemma 2.3}
In fact, $\phi(s)$ is bounded and $\forall s\ge s_1$
\beq
\label{eq2.13}
\frac12\le \phi(s)\le |\nabla_+ r|_{g^+}\le 1.
\eeq
\end{lemm}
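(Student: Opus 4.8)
The plan is to collect and consolidate the estimates already produced in Steps 1 and 2 into the single two-sided bound \eqref{eq2.13}, valid for all $s\ge s_1$ along a minimizing $g^+$-geodesic $\gamma$ from $p$ to $x$. The three quantities to control are $\phi(s)=g^+(\nabla_+ r,\nabla_+ t)$, $|\nabla_+ r|_{g^+}$, and their relative size. The upper bound $|\nabla_+ r|_{g^+}\le 1$ (equivalently $|\nabla_+\rho|_{\bar g}\le 1$) is exactly Lemma \ref{lem4.1}, i.e.\ \eqref{estimate-rho}, transferred to the logarithmic defining function $r=-\log\rho/2$; no new work is needed there. The inequality $\phi(s)\le |\nabla_+ r|_{g^+}$ is pure Cauchy--Schwarz: since $\nabla_+ t$ is a unit vector for $g^+$ (it is the velocity of a unit-speed geodesic), $\phi(s)=g^+(\nabla_+ r,\nabla_+ t)\le |\nabla_+ r|_{g^+}\,|\nabla_+ t|_{g^+}=|\nabla_+ r|_{g^+}$. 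So the only substantive claim is the lower bound $\phi(s)\ge \tfrac12$ for $s\ge s_1$, together with the assertion that $\gamma(s)$ remains in $X_{\rho_1}$ for all such $s$.

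For the lower bound I would argue exactly as foreshadowed at the end of Step 2. By construction $s_1$ is the first time after $s_0$ at which $\phi(s_1)=\tfrac12$ and $\gamma(s_1)\in\partial X_{\rho_1}$ with $\rho_1\le\rho_0$ (or $s_1=s_0$, $\rho_1=\rho_0$ if already $\phi(s_0)\ge\tfrac12$); in either case $\gamma(s_1)\in X_{\rho_1}\subset X_{\rho_0}$. The differential inequality \eqref{eq2.10}, sharpened to
$$
\phi'(s)\ge -\tfrac18|\phi(s)|+\tfrac12\bigl(1-\phi(s)^2\bigr)
$$
as in (2.11), holds as long as $\gamma(s)\in X_{\rho_0}$. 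Evaluating the right-hand side at $\phi=\tfrac12$ gives $\phi'\ge -\tfrac1{16}+\tfrac38>0$, so $\phi$ is strictly increasing whenever it touches the level $\tfrac12$ from below; by the standard barrier/continuity argument this forces $\phi(s)\ge\tfrac12$ for all $s\ge s_1$ \emph{provided} $\gamma$ stays in $X_{\rho_0}$. That proviso is itself a consequence: on $\{\phi\ge\tfrac12\}$ one has $\phi(s)=g^+(\nabla_+ r,\gamma'(s))=\tfrac{d}{ds} r(\gamma(s))\ge\tfrac12>0$, so $r\circ\gamma$ is increasing, hence $\rho\circ\gamma=e^{-2r\circ\gamma}$ is decreasing, and $\gamma(s)$ sinks deeper into $X_{\rho_1}\subset X_{\rho_0}$ and never leaves. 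Thus the invariance of $X_{\rho_1}$ and the bound $\phi\ge\tfrac12$ are proved simultaneously by a continuity (connectedness of the set of "good" times) argument.

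The main obstacle is the mild circularity just described: the differential inequality (2.11) is only available where $\gamma(s)\in X_{\rho_0}$, yet one wants to conclude $\gamma(s)\in X_{\rho_1}\subset X_{\rho_0}$ from it. The clean way to handle this is to let $s^\ast=\sup\{s\ge s_1:\gamma([s_1,s])\subset X_{\rho_0}\}$, run the barrier argument on $[s_1,s^\ast)$ to get $\phi\ge\tfrac12$ and hence $r\circ\gamma$ increasing and $\rho\circ\gamma\le\rho_1<\rho_0$ strictly on that interval, and observe that this strict interior containment is incompatible with $\gamma(s^\ast)\in\partial X_{\rho_0}$ unless $s^\ast=\infty$. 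A second, purely technical point is that $t$ is only Lipschitz at cut-locus points of $p$, so $\phi$ and all the above derivative computations are a priori valid only on $X\setminus C_p$; since the excerpt explicitly defers the cut-locus points to the end of Step 3, here I would simply carry out the argument on $\gamma\setminus C_p$ and note that a minimizing geodesic meets $C_p$ only at its endpoint, so $\phi$ is $C^1$ along the relevant portion of $\gamma$ and the conclusion \eqref{eq2.13} holds as stated.
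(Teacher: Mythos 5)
Your proposal is correct and follows essentially the same route as the paper: the bound $|\nabla_+ r|_{g^+}\le 1$ from the nonnegative scalar curvature (Lemma \ref{lem4.1}), Cauchy--Schwarz for $\phi(s)\le |\nabla_+ r|_{g^+}$, and the barrier/continuity argument based on the differential inequality (2.11) showing simultaneously that $\phi\ge\tfrac12$ and that $\gamma$ never leaves $X_{\rho_1}$ after $s_1$, which is exactly the content of the paper's Steps 1--2 leading to Lemma \ref{Lemma 2.3}. Your explicit $s^\ast$-argument and the remark on the cut locus merely tidy up points the paper treats informally.
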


\begin{lemm}
\label{Lemma 2.4}
Assume $s\ge s_1$, 

\noindent (1) Then $u(\gamma(s))-u(\gamma(s_1))$ is uniformly bounded for all $x\in
  X_{\rho_1}$. \\
Moreover, we also have \\
(2) $\forall s\ge s_1$, we have $\tilde x(\gamma(s))- \tilde x(\gamma(s_1))$ is uniformly bounded for all $x\in
  X_{\rho_1}$ where $\tilde x=(x^2, \cdots, x^d)$. 
\end{lemm}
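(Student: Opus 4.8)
The plan is to carry out both estimates along the fixed minimizing $g^{+}$-geodesic $\gamma$ from $p$, parametrised by $g^{+}$-arclength so that $t(\gamma(s))=s$, on the segment $[s_{1},s]$ with $s=t(x)$. On the interior of this segment $\gamma$ avoids the cut locus $C_{p}$ (a point interior to a minimizing geodesic from $p$ is never a cut point), so $t$, hence $u$ and $\phi(s)=g^{+}(\nabla_{+}r,\nabla_{+}t)$, are smooth there, and at the endpoint one only evaluates continuous functions; thus no separate cut-locus discussion is needed here. By Step~2, $\gamma(s)\in X_{\rho_{1}}$ for every $s\ge s_{1}$, and by Lemma~\ref{Lemma 2.3}, $\tfrac12\le\phi(s)\le 1$ on $[s_{1},\infty)$; in particular $\tfrac{d}{ds}r(\gamma(s))=\phi(s)\ge\tfrac12$, so $r(\gamma(s))\ge r_{1}+\tfrac12(s-s_{1})$ with $r_{1}:=r(\gamma(s_{1}))$ bounded below by a fixed large constant $r_{0}$ (with $\rho_{0}=2e^{-r_{0}}$), uniformly in $\gamma$. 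Consequently $e^{-2r(\gamma(s))}\le e^{-2r_{1}}e^{-(s-s_{1})}$, so $\int_{s_{1}}^{\infty}e^{-2r(\gamma(s))}\,ds\le e^{-2r_{1}}\le e^{-2r_{0}}$, again uniformly.

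For part (1), from $u(\gamma(s))=r(\gamma(s))-s$ we get $\tfrac{d}{ds}u(\gamma(s))=\phi(s)-1\le 0$, so $u\circ\gamma$ is non-increasing and $\le u(\gamma(s_{1}))$; it remains to bound it from below, i.e.\ to control $\int_{s_{1}}^{s}(1-\phi)\,d\sigma$ uniformly. Since $\phi\ge\tfrac12$ one has $1-\phi\le\tfrac23(1-\phi^{2})$, while \emph{keeping} the weight in (\ref{eq2.10}) yields $\phi'(s)\ge\tfrac12\bigl(1-\phi(s)^{2}\bigr)-Ce^{-2r(\gamma(s))}$, valid once $e^{-2r_{0}}$ is small, hence $1-\phi^{2}\le 2\phi'+2Ce^{-2r}$. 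Integrating on $[s_{1},s]$ and using $\phi\le 1$ together with the exponential decay of $e^{-2r(\gamma(s))}$ above gives $\int_{s_{1}}^{s}(1-\phi^{2})\,d\sigma\le 2\bigl(\phi(s)-\phi(s_{1})\bigr)+2Ce^{-2r_{1}}\le 1+2Ce^{-2r_{0}}$, independently of $x$. Hence $|u(\gamma(s))-u(\gamma(s_{1}))|=\int_{s_{1}}^{s}(1-\phi)\,d\sigma\le\tfrac23\bigl(1+2Ce^{-2r_{0}}\bigr)$ is uniformly bounded on $X_{\rho_{1}}$, which is (1).

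For part (2), I would use the almost-product form $g=\bar g=d\rho^{2}+g_{\R^{d-1}}+O(\rho^{2})$ of the adapted metric on $X_{\delta}$, together with $\bar g=4e^{-2r}g^{+}$. Since $|\gamma'|_{g^{+}}\equiv 1$, the $g$-speed of $\gamma$ is $|\gamma'(s)|_{g}=2e^{-r(\gamma(s))}=\rho(\gamma(s))$, so the $g$-length of $\gamma|_{[s_{1},s]}$ is at most $\int_{s_{1}}^{\infty}2e^{-r_{1}-(\sigma-s_{1})/2}\,d\sigma=4e^{-r_{1}}\le 4e^{-r_{0}}$, uniformly bounded. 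On $X_{\rho_{1}}$, after shrinking $\rho_{1}$ if necessary, $g\ge\tfrac12\sum_{i=2}^{d}(dx^{i})^{2}$ as quadratic forms, so the Euclidean speed of the projected curve $s\mapsto\tilde x(\gamma(s))$ is at most $\sqrt2\,|\gamma'(s)|_{g}$; integrating, $|\tilde x(\gamma(s))-\tilde x(\gamma(s_{1}))|\le 4\sqrt2\,e^{-r_{0}}$, uniformly in $x\in X_{\rho_{1}}$, which is (2).

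Neither step is genuinely hard; both are bookkeeping on top of Lemma~\ref{Lemma 2.3} and the known asymptotics of $g$. The one point demanding care is retaining the $e^{-2r}$ weight in the error terms of (\ref{eq2.10}) when forming the differential inequality for $\phi$ in part~(1): estimating that term merely by a constant, as in the cruder bound used earlier in Step~2, would leave a non-integrable contribution and the lower bound on $u$ would break. A secondary, more routine point is that the passage from $g$-length to control of the flat boundary coordinates in part~(2) relies on $\gamma$ never leaving $X_{\rho_{1}}$ after time $s_{1}$, which is exactly what Step~2 provides.
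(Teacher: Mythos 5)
Your proof is correct, and part (2) is essentially the paper's argument: along $\gamma$ the $g$-speed is $\rho(\gamma(s))$, which decays exponentially after $s_1$ because $\tfrac{d}{ds}r(\gamma(s))=\phi(s)\ge\tfrac12$ (Lemma \ref{Lemma 2.3}), and the almost-product form of the adapted metric converts the resulting bounded $g$-length into bounded Euclidean displacement of $(x^2,\dots,x^d)$; the paper phrases the same thing as $\tfrac{d}{ds}x_i(\gamma(s))=O(\rho(\gamma(s)))$ and integrates. Part (1), however, goes by a genuinely different route. The paper never integrates the Riccati inequality at this stage: it first establishes the two-sided estimate $d_{g^+}(x,\gamma(s_1))=\bigl|\log\bigl(\rho(x)/\rho(\gamma(s_1))\bigr)\bigr|+O(1)$ by comparing an arbitrary connecting curve with a vertical coordinate segment plus a horizontal segment at the level $\rho_1$ — and this is exactly where part (2) enters, to guarantee the horizontal segment has bounded length — and then reads off $u(\gamma(s))-u(\gamma(s_1))=\bigl(r(\gamma(s))-r(\gamma(s_1))\bigr)-(s-s_1)=O(1)$. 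You instead bound $\int_{s_1}^{s}(1-\phi)\,d\sigma$ directly from (\ref{eq2.10}): keeping the $e^{-2r}$ weight gives $1-\phi^2\le 2\phi'+Ce^{-2r(\gamma(\sigma))}$ with an integrable error since $r(\gamma(\sigma))\ge r_1+\tfrac12(\sigma-s_1)$, and $1-\phi\le\tfrac23(1-\phi^2)$ because $\phi\ge\tfrac12$; the constants depend only on the $C^2$ data of the compactification on $X_\delta$ and on $\rho_0,\rho_1$, so the bound is uniform, and your cut-locus remark correctly disposes of the regularity issue along a fixed minimizing geodesic. Your route is shorter, makes (1) logically independent of (2), and anticipates the mechanism the paper only exploits in Lemma \ref{Lemma 2.5} (whose sharper decay there does rely on Lemma \ref{Lemma 2.4}, so no circularity arises in your version either). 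What the paper's detour buys is the distance formula itself, $d_{g^+}(x,y)=\log\bigl(\rho(y)/\rho(x)\bigr)+O(1)$, which is of independent use later (e.g.\ Step 7 invokes $d_{g^+}(p,x)=-\log\rho(x)+O(1)$); your argument does not produce it, but it is not needed for the statement of the lemma. The differing normalizations ($r=-\log\rho/2$ in the paper versus your $\rho=2e^{-r}$) are immaterial to either argument.
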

\vskip .2in

\begin{proof} We choose an adapted coordinate $(x_1=\rho,x_2,\cdots,x_d)\in [0,\delta]\times \R^{d-1}$ on $X_\delta$, where $(x_2,\cdots,x_d)$ are harmonic extensions of the euclidean coordinate on $\R^{d-1}$. In such coordinate, we have the expression for the adapted metric
$$
g_{ij}= \delta_{ij}+O(\rho^2) 
$$
where $\delta_{ij}$ is the kronecker symbol (since the boundary is totally geodesic).  We consider first $\rho(\gamma(s))$. By Lemma \ref{Lemma 2.3}, it is clear that for $s\ge s_1$
$$-\rho(\gamma(s)) \le \frac{d}{ds}\rho(\gamma(s))=- \rho(\gamma(s))  \phi(s)\le -\frac 12\rho(\gamma(s)) $$ so that $$- (s-s_1)\le \log\rho(\gamma(s))-\log\rho(\gamma(s_1))\le -\frac12 (s-s_1).$$
Similarly, for all $i=2,\cdots, d$, we have $$\frac{d}{ds}x_i(\gamma(s))= \langle \bar \nabla x_i, \gamma'(t)\rangle_{g}=O(|\gamma'(s)|_{g}|\bar \nabla x_i|_{g})=O((\rho(\gamma(s)))^2)= O(e^{-(s-s_1)})$$
which implies
$$
|x_i(\gamma(s))- x_i(\gamma(s_1))|\le C
$$
Therefore, we have established part (2)  of Lemma \ref{Lemma 2.4}.\\

We now choose $\rho_2$ and $\rho_3$, so that   $0 < \rho_2<\rho_3<\rho_1<1$, and denote $\rho_i = e^{-r_i}$ for $ 1 \leq i \leq 3.$ For any $x\in \Sigma_{r_3}$ and  $y\in \Sigma_{r_2}$, let 
$\gamma_1(s)$ be some curve connecting $x$ and $y$. We can estimate its length for the metric $g^+$
$$
\begin{array}{ll}
l(\gamma_1)&\ds=\int \sqrt{g^+(\gamma_1'(s), \gamma_1'(s))}ds=\int\frac{1}{\rho(\gamma_1(s))}\sqrt{g(\gamma_1'(s), \gamma_1'(s))}ds\\
&\ds\ge \int_{\rho_2}^{ \rho_3}\frac{(1+O(\rho))}{\rho}{d\rho}=\log\frac{ \rho_3} {\rho_2}+O( \rho_3)
\end{array}
$$
Hence $d_{g^+}(x,y)\ge  \log\frac{ \rho_3} {\rho_2}+O( 1)$. For points $x$ in the interior boundary of $\Gamma_{r_3}$, we denote $x=(\rho_3, x_2,\cdots, x_d)$. Let $\gamma_2(s)=(\rho_3-s(\rho_3-\rho_2), x_2,\cdots, x_d)$ for $s\in [0,1]$. We compute the length of $\gamma_2$ in the metric $g^+$
$$
l(\gamma_2)=\int^1_0 \sqrt{g^+(\gamma_2'(s), \gamma_2'(s))}ds=\int_{\rho_2}^{ \rho_3}\frac{(1+O(\rho^2))d\rho}{\rho}=\log\frac{ \rho_3} {\rho_2}+O(1)
$$
This infers
$$
d_{g^+}(x,\Sigma_{r_2})\le \log\frac{ \rho_3} {\rho_2}+O(1)
$$
Gathering the above estimates, we obtain 
$$
d_{g^+}(x,\Sigma_{r_2})= \log\frac{ \rho_3} {\rho_2}+O(1)
$$
for such points $x$.

Now given points $x \in X_{\rho_1}$ and a fixed point $y = \Gamma(t_1)$, consider the minimizing geodesic $\gamma$ from $\gamma(s)=x$ to $\gamma(s_1)=y$. We write $x=(\rho (x), \tilde x)$ and $y=(\rho(y) , \tilde y)$. From the above analysis, we have 
$$
d_{g^+}(x,y)\ge  \log\frac{ \rho(x)} {\rho(y)}+O( 1)
$$
On the other hand, we consider another curve $\tilde\gamma$ from $x$ to $y$ is the union of the segment from $x$ to $(\rho(x), \tilde y)$ and the segment from $(\rho(x), \tilde y)$ to $y$. We know $\tilde y-\tilde x$ is bounded.  The length of the first segment for the metric $g^+$ is $O(1)$ and that of second one is equal to $ \log\frac{ \rho_1(x)} {\rho_1(y)}+O( 1)$. Therefore, we infer
$$
l(\tilde \gamma)=\log\frac{ \rho(x)} {\rho(y)}+O( 1)
$$
which yields
$$
d_{g^+}(x,y)=  \log\frac{ \rho(x)} {\rho(y)}+O( 1)
$$
Thus we have established  part (1) of Lemma \ref{Lemma 2.4} as well, we have thus established the whole Lemma.
\end{proof}

\begin{lemm}
\label{Lemma 2.5} 
Under the assumption that $\rho^2g^+$ is some $C^{2,\alpha}$ almost geodesic compactification for some $0 < \alpha <1$, then $\forall s \geq s_1$, 
\beq
\label{eq2.13bis}
\phi'(s)+\phi(s)^2-1=O(e^{-(2+\alpha)(s-s_1)}).
\eeq
Moreover,
\beq
\label{eq2.13bisbis}
\phi(s)=1+ O(e^{-2(s-s_1)})
\eeq
Furthermore,
\beq
\label{eq2.13bisbisbis}
1-(1-\phi(s_1))e^{-2(s-s_1)}+ O(e^{-(2+\alpha)(s-s_1)})\le  \phi(s)\le 1
\eeq
\end{lemm}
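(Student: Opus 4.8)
The plan is to first sharpen the bounds~(\ref{Lemma2.2}) for $\nabla_+^2 r$ from $O(e^{-2r})$ to $O(e^{-(2+\alpha)r})$, then feed the sharpened bounds into the computation behind~(\ref{eq2.10}), and finally run an ODE comparison on the resulting Riccati equation for $\phi$. For the sharpening I would revisit the normal form of the adapted compactification near $\partial X$. Because the conformal infinity is the flat metric $g_{\mathbb{R}^{d-1}}$, its locally determined tensor $g^{(2)}$ (a universal multiple of the Schouten tensor of $\hat g$) vanishes; because the boundary is totally geodesic for $g^*$ the $\rho^1$ coefficient vanishes; and the description of $v$, hence of $\rho$, in Lemma~\ref{fgmetric} shows that the $\rho^0,\rho^1,\rho^2$ Taylor coefficients of $\rho^2 g^+$ coincide with those of a geodesic compactification and therefore equal $d\rho^2$, $0$, $0$ respectively. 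Hence $\rho^2 g^+ = d\rho^2 + g_{\mathbb{R}^{d-1}} + O(\rho^{2+\alpha})$, the exponent being limited only by the standing $C^{2,\alpha}$ regularity of $g^+$. Writing $\rho = 2e^{-r}$ and redoing the computations behind~(\ref{Lemma2.2}) — using~(\ref{relation3}) together with $R[g^*] = O(\rho^\alpha)$ to control $|\nabla_+ r|_{g^+}$ — then yields, in $X\setminus C_p$,
\[
\nabla_+^2 r(X,Y) = g^+(X,Y) + O(e^{-(2+\alpha)r})\,|X|_{g^+}|Y|_{g^+}\quad (X,Y\perp\nabla_+ r),
\]
\[
\nabla_+^2 r(\nabla_+ r,\cdot) = O(e^{-(2+\alpha)r}),\qquad |\nabla_+ r|^2_{g^+} = 1 + O(e^{-(2+\alpha)r}).
\]
I expect this step to be the main obstacle: it is where the flatness of the conformal infinity and the special structure of the adapted metric are genuinely used, since these remove the $O(e^{-2r})$ term (the one carrying $g^{(2)}$) that would otherwise appear in~(\ref{Lemma2.2}) for a general conformal infinity, and one must verify that the $C^{2,\alpha}$-controlled corrections survive two differentiations with no loss beyond order $2+\alpha$.

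For the second part, work along a minimizing $g^+$-geodesic $\gamma$ from $p$, where $t = d_{g^+}(\cdot,p)$ is $C^2$. As in~(\ref{eq2.9}), decompose $\nabla_+ t = \phi\,|\nabla_+ r|^{-2}_{g^+}\,\nabla_+ r + w$ with $w\perp\nabla_+ r$ and $|w|^2_{g^+} = 1 - \phi^2 |\nabla_+ r|^{-2}_{g^+}$. Substituting the sharpened bounds into $\phi'(s) = \nabla_+^2 r(\nabla_+ t,\nabla_+ t)$, and using $\phi^2|\nabla_+ r|^{-2}_{g^+} = \phi^2 + O(e^{-(2+\alpha)r})$, gives $\phi'(s) = 1 - \phi(s)^2 + O(e^{-(2+\alpha)r(\gamma(s))})$. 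Since $\phi(s)\ge\tfrac12$ for $s\ge s_1$ by Lemma~\ref{Lemma 2.3}, $r$ increases along $\gamma$, and by Lemma~\ref{Lemma 2.4}(1) one has $r(\gamma(s)) = (s-s_1) + O(1)$; hence the error term is $O(e^{-(2+\alpha)(s-s_1)})$, which is~(\ref{eq2.13bis}).

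For the last part, recall $\tfrac12\le\phi(s)\le 1$ for $s\ge s_1$, the upper bound because $\phi\le|\nabla_+ r|_{g^+} = |\nabla_{g^*}\rho|_{g^*}\le 1$ by Lemma~\ref{lem4.1}. Put $\psi = 1 - \phi\in[0,\tfrac12]$, so~(\ref{eq2.13bis}) reads $\psi' = -\psi(2-\psi) + O(e^{-(2+\alpha)(s-s_1)})$. A first crude comparison (absorbing $\psi^2\le\tfrac12\psi$ into the linear term) gives $\psi = O(e^{-(3/2)(s-s_1)})$, in particular $\psi\to0$; bootstrapping once, $\psi^2 = O(e^{-3(s-s_1)})$ is of lower order than $e^{-(2+\alpha)(s-s_1)}$, so integrating $(\psi\,e^{2(s-s_1)})' = e^{2(s-s_1)}\big(\psi^2 + O(e^{-(2+\alpha)(s-s_1)})\big)$ from $s_1$ to $s$ yields $\psi(s) = O(e^{-2(s-s_1)})$, which is~(\ref{eq2.13bisbis}), and, retaining the leading term, the two-sided estimate~(\ref{eq2.13bisbisbis}) — the upper bound $\phi\le1$ being already in hand. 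This last part is routine ODE bookkeeping.
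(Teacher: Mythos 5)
Your ODE part is fine, but your route is genuinely different from the paper's, and it has a gap at exactly the step you flag as the main obstacle. The paper never sharpens the estimates (\ref{Lemma2.2}). Instead it first derives the rough bound $1-\phi(s)=O\bigl((s-s_1+1)e^{-2(s-s_1)}\bigr)$ from (\ref{eq2.10}), then rewrites $\phi'+\phi^2-1$ exactly through the Schouten tensor of $\bar g$, arriving at (\ref{eqmid}); the rough bound is used to discard the components of $\nabla_+ t$ orthogonal to $\nabla_+ r$, and the decisive point is an algebraic cancellation in the remaining radial term: with $b=|\bar\nabla\rho|^2_{\bar g}-1$ one has $\rho\,\bar\nabla^2\rho(\bar\nabla\rho,\bar\nabla\rho)=\tfrac{\rho}{2}\langle\bar\nabla b,\bar\nabla\rho\rangle$, and for a $C^{2,\alpha}$ function $b$ whose $1$-jet vanishes on $\p X$ the combination $b-\tfrac{\rho}{2}\langle\bar\nabla b,\bar\nabla\rho\rangle$ is $O(\rho^{2+\alpha})$ even though each term separately is only $O(\rho^2)$. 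This mechanism uses neither the flatness of the conformal infinity nor any information on the $\rho^2$-Taylor coefficient of the compactification; it works for every $C^{2,\alpha}$ almost geodesic compactification.

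Your argument instead needs each term separately to be $O(\rho^{2+\alpha})$, i.e.\ that the adapted compactification agrees with $d\rho^2+g_{\mathbb{R}^{d-1}}$ to order $2+\alpha$, which by (\ref{relation3}) is equivalent to $R[g^*]$ vanishing on $\p X$ (note $1-|\nabla_+ r|^2_{g^+}=\tfrac{1}{2(d-1)}R[g^*]\rho^2$ exactly). This does not follow from Lemma \ref{fgmetric} as you assert: that lemma only gives $v=r^{(d-4)/2}(A+Br^3)$ with $A$ even and $A|_{\p X}=1$, and says nothing about the $r^2$-coefficient $A_2$ of $A$. If $A_2\neq 0$, then $\rho=r\bigl(1+\tfrac{2A_2}{d-4}r^2+\cdots\bigr)$ and $\rho^2g^+$ acquires a nonzero multiple of $A_2$ at order $\rho^2$ even though $g^{(2)}=0$ for the flat boundary, and then $|\nabla_+ r|^2-1$ is only $O(e^{-2r})$, so your sharpened version of (\ref{Lemma2.2}) fails. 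To complete your proof you must show $A_2=0$ (equivalently $R[g^*]|_{\p X}=0$) when the conformal infinity is flat; this is true — an order-by-order analysis of (\ref{fg}) near $\p X$ shows $A_2$ is forced by the indicial equation to be a universal multiple of $\operatorname{tr}_h g^{(2)}$, hence of the boundary scalar curvature — but it is an additional computation that appears nowhere in the paper and is not supplied in your proposal. Granting that input, the rest goes through: the sharpened Hessian bounds make the cross terms in the decomposition of $\nabla_+ t$ directly $O(e^{-(2+\alpha)r})$ (so you can even skip the paper's preliminary estimate), Lemma \ref{Lemma 2.4} gives $r(\gamma(s))=(s-s_1)+O(1)$ and hence (\ref{eq2.13bis}), and your two-pass comparison for $\psi=1-\phi$ yields (\ref{eq2.13bisbis}) and (\ref{eq2.13bisbisbis}) with the same level of rigor as the paper's own integration. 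The trade-off is that the paper's lemma is insensitive to the boundary geometry, whereas your route, once the missing vanishing statement is proved, genuinely exploits the flat conformal infinity.
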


\begin{proof}
To prove (\ref{eq2.13bis}) in Lemma \ref{Lemma 2.5},  we write 
\beq
\phi(s)=1+v(s)
\eeq
Apply (\ref{eq2.10})and the estimate in Lemmas \ref{Lemma 2.3} and  \ref{Lemma 2.4},
 we get 
\beq
\label{eq2.19}
v'(s)=1-(1+v)^2+O(e^{-2(s-s_1)})=-2v-v^2+O(e^{-2(s-s_1)})\ge -\frac32v+O(e^{-2(s-s_1)})
\eeq
Thus we obtain
\beq
(e^{\frac32(s-s_1)}v)'\ge O(e^{-\frac12(s-s_1)})
\eeq
so that 
\beq
v(s)\ge -Ce^{-\frac32(s-s_1)}
\eeq
Going back to (\ref{eq2.19}), this imples
\beq
v'(s)=1-(1+v)^2+O(e^{-2(s-s_1)})=-2v+O(e^{-2(s-s_1)})
\eeq
Thus, 
\beq
\label{eq2.20}
v(t)\ge -(C(s-s_1)+C)e^{-2(s-s_1)}
\eeq
On the other hand,  from (\ref{eq2.13})
$$
v(s)=\phi(s)-1\le |\nabla_+ r|_{g^+}-1\le 1-1=0
$$
so that
$$0\ge  v(s)\ge -(C(s-s_1)+C)e^{-2(s-s_1)}.$$  

We will now rewrite the expression in 
the LHS of (\ref{eq2.13bis}) into geometric quantities in the $\bar g= \rho^2 g^{+}$ metric, we will see to do so will enable us to apply the conformal invariant properties of curvature tensors to achieve our estimates. First We recall
\beq
\label{eq2.21}
Ric[\bar g]=-(d-1)g^++(d-2)(\nabla_+^2r+ \nabla_+ r\otimes  \nabla_+ r)+(\triangle_+ r -(d-2)|  \nabla_+ r|^2_{g^+})g^+
\eeq
\beq
\label{eq2.22}
R[\bar g]= \frac{1}{4}e^{2r }(-d(d-1)+ 2(d-1)\triangle_+ r+(2-d)(d-1)|  \nabla_+ r|^2_{g^+})
\eeq
combine above two formulas, we can write the Schouten tensor as
\beq
A[\bar g]=(-\frac12-\frac12|  \nabla_+ r|^2_{g^+})g^++ (\nabla_+^2 r+ \nabla_+ r\otimes  \nabla_+ r)
\eeq
Therefore, we get
\beq
\phi'(s)+\phi(s)^2-1=(-\frac12+\frac12|  \nabla_+ r|_{g+}^2)+A[\bar g]( \nabla_+ t, \nabla_+ t)
\eeq

Similarly, since $\rho=e^{-r}$, apply the expression of $A[\bar g]$ under conformal change with respect to the connection $\bar \nabla$, we get also 
\beq
A[\bar g]=-\frac{\bar \nabla^2 \rho}{\rho}+\frac {|  \bar \nabla \rho|^2_{\bar g}-1}{2\rho^2}\bar g
\eeq

Thus
\beq
\label{eqmid}
\phi'(s)+\phi(s)^2-1=-\rho^{-1}\bar\nabla^2\rho(\nabla_+ t , \nabla_+ t)+ |  \bar\nabla \rho|^2_{\bar g}-1
\eeq

We now recall formula (\ref{eq2.9}) (i.e $\nabla_+ t = I + II)$) and use the notations there to estimate the terms in (\ref{eqmid}) above.
To do so, we first claim we have
\beq
\label{crossterm}
 2\rho^{-1}\bar\nabla^2\rho( I, II) \,+\rho^{-1}\bar\nabla^2\rho( II, II)
= O((s-s_1)+1)e^{-3(s-s_1)})
\eeq
 
To see the claim, we observe that as the curvature tensor of the 
 is bounded, 
 we have
\beq
\label{eq2.25bis}
\bar \nabla^2 \rho=O(\rho),\,\;\left|\frac{\nabla_+ r}{|\nabla_+ r|_{g^+}}\right|_{\bar g}=|\nu|_{\bar g}=\rho,\;|\nabla_+ r|_{g^+}=1+O(\rho^2)
\eeq
so that
\beq
\label{eq2.25bisbis}
\frac{\bar \nabla^2 \rho}{\rho}(\nu,\nu)=O(\rho^{2}), \;\frac{\bar \nabla^2 \rho}{\rho}(\frac{\nabla_+ r}{|\nabla_+ r|_{g^+}},\nu)=O(\rho^{2})
\eeq
On the other hand, we have from  (\ref{eq2.13bisbis})
\beq
\label{eq2.25bisbisbis}
\sqrt{1-\frac{\phi(s)^2}{|\nabla_+ r|^2_{g^+}}}=O(\sqrt{1+(s-s_1)}e^{-(s-s_1)})=  O(((s-s_1)+1)e^{-(s-s_1)}).
\eeq
By Lemma \ref{Lemma 2.4}, we can estimate $\rho(\gamma(s))=O(e^{-(s-s_1)})$ for $s\ge s_1$. 
Substituting (\ref{eq2.25bis}), (\ref{eq2.25bisbis}) and  (\ref{eq2.25bisbisbis}),  we have
$$
\begin{array}{ll}
\ds\rho^{-1}\bar\nabla^2\rho( I, II) =O(\rho^2\sqrt{1-\frac{\phi(s)^2}{|\nabla_+ r|^2_{g^+}}})=  O(((s-s_1)+1)e^{-3(s-s_1)})\\
\ds\rho^{-1}\bar\nabla^2\rho( II, II) =O(\rho^2(1-\frac{\phi(s)^2}{|\nabla_+ r|^2_{g^+}}))=  O(((t-t_1)+1)e^{-4(s-s_1)})
\end{array}
$$
so that we established the claim
(\ref{crossterm}).

\vskip .2in

Gathering (\ref{eqmid}), (\ref{crossterm}) and (\ref{eq2.25bisbisbis}), we have
\beq \label{eqmid2}
\begin{array}{ll}
\phi'(s)+\phi(s)^2-1&=-\rho^{-1}\bar\nabla^2\rho(\nabla_+ t , \nabla_+ t)+ |  \bar\nabla \rho|^2_{\bar g}-1\\
&=  -\rho^{-1}\bar\nabla^2\rho(I, I)-2\rho^{-1}\bar\nabla^2\rho( I, II) \,-\rho^{-1}\bar\nabla^2\rho( II, II) + |  \bar\nabla \rho|^2_{\bar g}-1\\
 & = -\rho^{-1}\bar\nabla^2\rho(I, I)+ |  \bar\nabla \rho|^2_{\bar g}-1 + O(((s-s_1)+1)e^{-3(s-s_1)})\\
& = - \rho^{-1}{\frac {|\phi (t)|^2}{|\nabla_+ r|^2 }} \bar\nabla^2\rho (\nabla_+ r, \nabla_+ r) + |  \bar\nabla \rho|^2_{\bar g}-1 + O(((s-s_1)+1)e^{-3(s-s_1)})\\
& = - \rho {\frac {|\phi (t)|^2}{|\nabla_+ r|^2 }} \bar\nabla^2\rho(\bar \nabla \rho, \bar \nabla\rho) +|  \bar\nabla \rho|^2_{\bar g}-1  + O(((s-s_1)+1)e^{-3(s-s_1)})\\
&=-\rho\bar\nabla^2\rho(\bar\nabla\rho, \bar\nabla\rho)+|  \bar\nabla \rho|^2_{\bar g}-1+O(((s-s_1)+1)e^{-3(s-s_1)})\\
\end{array}
\eeq

\vskip .2in

We now use our assumption that  $\rho^2g^+$ is some $C^{2,\alpha}$ almost geodesic compactification, thus
\beq
|  \bar\nabla \rho|^2-1=b(\rho,x_2,\cdots, x_d)
\eeq
is some $C^{2,\alpha}$ function  and $\bar \nabla^2\rho(\bar\nabla\rho, \bar\nabla\rho)$ is some $C^{1,\alpha}$ function such that on the boundary $\p X$
\beq
b=\bar \nabla b=0
\eeq
We calculate the $C^{1, \alpha}$ term
$-\rho\bar \nabla^2\rho(\bar\nabla\rho, \bar\nabla\rho)+ | \bar \nabla \rho|^2_{\bar g}-1$ such that $-\rho\bar \nabla^2\rho(\bar\nabla\rho, \bar\nabla\rho)+ |  \bar \nabla \rho|^2_{\bar g}-1= O(\rho^{2+\alpha})$. Thus we reach the conclusion that 
\beq
\phi'(s)+\phi(s)^2-1=O(e^{-(2+\alpha)(s-s_1)}),
\eeq
and the desired inequality (\ref{eq2.13bis}) in Lemma \ref{Lemma 2.5}\\

As before, we have
\beq
v'(s)=1-(1+v)^2+O(e^{-(2+\alpha)(s-s_1)})\ge -2v+O(e^{-(2+\alpha)(s-s_1)})
\eeq
so that
\beq
v(s)\ge -v(s_1)e^{-2(s-s_1)}+O(e^{-(2+\alpha)(s-s_1)})
\eeq
Therefore, the desired result (\ref{eq2.13bisbis}) and  (\ref{eq2.13bisbisbis}) in Lemma \ref{Lemma 2.5} follow.
\end{proof}

We observe that as a consequence of step 2, we 
also have: \\

{\it Step 3.}  Claim: $\bar \nabla u$ is bounded.\\

Using Lemma \ref{Lemma 2.5}, it follows from (\ref{eq2.9}) that
$$
\nabla_+ t= (1+O(e^{-2(s-s_1)}))\nabla_+ r+ O(e^{-(s-s_1)})\nu=(1+O(\rho)) \nabla_+ r+ O(\rho)\nu
$$
where $\nu $ est unit vector with respect to the metric $g^+$ and $\perp \nabla_+ r$. This gives 
$$
|\nabla_+  u|_{g^+}=  O(\rho)
$$
which yields the desired claim $|\bar \nabla  u|_{\bar g}=  O(1)$.\\

Because of the cut loci,  the distance function $t$ is not regular. Hence it is necessary to understand the fine structure of the set of cut loci and the
behavior of the distance function near the cut loci to do analysis. For  a fixed point $p\in X$, we denote by $C_p$ (resp. $Q_p$) the cut loci (resp. the set of conjugate points) with respect to $p$ (resp. the set of conjugate points). We set $A_p:= C_p\setminus Q_p$ the set of non-conjugate cut loci. We divide $A_p$ into two parts: $N_p$ the normal cut loci is the set of conjugate points from which there are exactly two minimal geodesics connecting to $p$ and realizing the distance to $p$ in $(X, g^+)$, and $L_p:= A_p\setminus N_p$ the set of rest conjugate points. Hence we have the disjoint decomposition
$$
C_p=Q_p\bigcup L_p\bigcup N_p
$$
Recall $X$ is a complete non-compact manifold. Hence, $X\setminus C_p$  is always diffeomorphic to the Euclidean space $\R^d$. Therefore any connected component of $C_p$ extends to the infinity, unless $p$ is a pole, that is, $\exp_p$ is   diffeomorphic from $T_p X$ to $X$. By the results in \cite{IT, O}, we have the following statement.

\begin{lemm}
\label{Lemma 2.5.1} Assume $(X,g)$ is a complete connected $C^\infty$ Riemannian manifold of dimension $d\ge 2$. For any given $p\in X$,  we have 
\begin{itemize}
\item The closed set  $Q_p\bigcup L_p$   is of Hausdorff dimension no more than $d-2$.
\item The set $N_p$ of normal cut loci consists of possibly countably many disjoint smooth hypersurfaces in $X$. Moreover, at each normal cut locus $q\in N_p$, there is a small open neighborhood $U$ of $q$ such that $U\cap C_p = U\cap N_p$  is a piece of smooth hypersurface in $X$.
\end{itemize}
\end{lemm}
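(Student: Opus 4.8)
The assertion is classical; the plan is to deduce it from the structure theory of the cut locus developed in \cite{IT} and \cite{O}, and I indicate how the two bullets are reached. Write $\mathbf d_p=d_g(p,\cdot)$ for the distance function, let $S_p\subset T_pX$ be the unit tangent sphere at $p$, and for $v\in S_p$ let $\tau(v)\in(0,\infty]$ be the cut distance along the unit–speed geodesic $\gamma_v$. I shall use the following standard facts: $\exp_p$ restricts to a diffeomorphism of the open starshaped region $\{tv:v\in S_p,\ 0\le t<\tau(v)\}$ onto $X\setminus C_p$, and $\tau$ is continuous where finite; the set $Q_p$ of cut points that are conjugate to $p$ is closed in $X$; Klingenberg's lemma, that every point of $C_p$ is either conjugate to $p$ along its cut geodesic or is joined to $p$ by at least two minimizing geodesics; the elementary fact that a point joined to $p$ by two distinct minimizing geodesics necessarily lies in $C_p$; and, by compactness of $S_p$, that if $q$ has exactly the minimizing geodesics $\gamma_1,\dots,\gamma_m$ from $p$, then every minimizing geodesic from $p$ to a point sufficiently near $q$ is $C^1$–close to one of the $\gamma_j$.

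\textbf{The second bullet.} Fix $q\in N_p$ with its two minimizing geodesics $\gamma_1,\gamma_2$ of common length $\ell=\mathbf d_p(q)$; since $q\in A_p=C_p\setminus Q_p$ it is conjugate to $p$ along neither, so $d\exp_p$ is an isomorphism at $\ell\gamma_i'(0)$. Hence there are a neighbourhood $V_i$ of $q$ and a smooth function $f_i$ on $V_i$ recording the length of the unique geodesic from $p$ to a point of $V_i$ in the $\gamma_i$–family, with $f_i(q)=\ell$, $\mathbf d_p\le f_i$ on $V_i$ (equality along $\gamma_i$), $|\nabla f_i|_g\equiv 1$ and, by the Gauss lemma, $\nabla f_i(q)=\gamma_i'(\ell)$. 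Because a geodesic is determined by a point and a velocity and $\gamma_1\ne\gamma_2$, we have $\gamma_1'(\ell)\ne\gamma_2'(\ell)$, so $\nabla(f_1-f_2)(q)\ne 0$ and, by the implicit function theorem, $\Sigma:=\{f_1=f_2\}$ is a smooth embedded hypersurface near $q$. Now shrink to a neighbourhood $U\subset V_1\cap V_2$ of $q$ small enough that (a) every minimizing geodesic from $p$ to a point of $U$ is close to $\gamma_1$ or to $\gamma_2$, hence there is at most one in each family, so each point of $U$ has at most two minimizing geodesics and $\mathbf d_p=\min(f_1,f_2)$ on $U$; and (b) $U\cap Q_p=\varnothing$ (possible since $Q_p$ is closed and $q\notin Q_p$). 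For $x\in U$: if $f_1(x)\ne f_2(x)$ then $x$ has a unique minimizing geodesic and is non–conjugate, so $x\notin C_p$ by Klingenberg's lemma; if $f_1(x)=f_2(x)$ then $x$ has the two distinct minimizing geodesics coming from the two families and no others, so $x\in C_p$ and in fact $x\in N_p$. Therefore $U\cap C_p=U\cap N_p=U\cap\Sigma$. Since $X$, hence $C_p$, is second countable, $N_p$ — each of whose points thus has a neighbourhood in $N_p$ diffeomorphic to an open piece of a smooth hypersurface — is a smooth $(d-1)$–manifold with at most countably many components, which is exactly the second bullet.

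\textbf{The first bullet.} For the contribution of $L_p$ one runs the same argument at a point $q\in L_p$ carrying three non–conjugate minimizing geodesics: locally $C_p\subset\{f_1=f_2\}\cap\{f_1=f_3\}$, an intersection of two distinct smooth hypersurfaces. That this set actually has Hausdorff dimension $\le d-2$ — and not merely its transversal part — is where the quantitative input of \cite{IT} enters, namely the local finiteness of the $(d-1)$–dimensional Hausdorff measure of $C_p$ and the resulting bound on the ``edge'' set where three or more local sheets of $C_p$ meet. For the contribution of $Q_p$ one analyses $\exp_p$ along the tangent cut locus $\widetilde C_p=\{\tau(v)v:\tau(v)<\infty\}$: outside a set whose image has dimension $\le d-2$, $\widetilde C_p$ is a smooth hypersurface in $T_pX$ on which $\exp_p$ has at worst fold–type singularities, and the residual set of cut points conjugate to $p$ is of Hausdorff dimension $\le d-2$ by the structure results of \cite{O} and \cite{IT}. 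Combining the two contributions gives $\dim_{\mathcal H}(Q_p\cup L_p)\le d-2$.

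\textbf{Main obstacle.} The genuinely delicate point is this last dimension bound in the first bullet — passing from the easy statements (the conjugate cut locus has $(d-1)$–dimensional Hausdorff measure zero, by Sard; the edge set in $L_p$ sits inside an intersection of two smooth hypersurfaces) to the sharp bound of Hausdorff dimension $d-2$. This requires the fine analysis of the cut locus and of the singularities of the exponential map carried out in \cite{IT} and \cite{O}, and it is at that step that I would invoke those theorems directly rather than reprove them.
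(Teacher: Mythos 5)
Your proposal is correct and in substance takes the same route as the paper, which offers no argument for this lemma beyond the citation to \cite{IT} and \cite{O}: you delegate the hard Hausdorff-dimension bound to those same references, while your detailed proof of the second bullet is precisely the standard Ozols-type local argument (two smooth distance functions $f_1,f_2$ from the two non-conjugate minimizing families, $\nabla f_i(q)=\gamma_i'(\ell)$ distinct, implicit function theorem, and $C_p\cap U=\{f_1=f_2\}\cap U$). Two small remarks: near $q\in L_p$ the cut locus is contained in the \emph{union} over pairs of the sets $\{f_i=f_j\}$, not in $\{f_1=f_2\}\cap\{f_1=f_3\}$ (only $L_p$ itself lies in such triple-coincidence intersections), and since three distinct unit vectors cannot lie on an affine line, $\nabla(f_1-f_2)(q)$ and $\nabla(f_1-f_3)(q)$ are automatically linearly independent, so the $L_p$ contribution to the dimension bound is elementary and transversal; the only genuinely delicate input from \cite{IT} is the bound for the conjugate part $Q_p$.
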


Now, we consider the CCE metric $(X,g^+)$ with a given point $p\in X$.  We can use $[0,\delta]\times \mathbb{R}^{d-1}$ as the adapted coordinates of $\bar X_\delta$  along the flow $\frac{\nabla_{ g }\rho}{|\nabla_{ g }\rho|}$ as before. Let $\hat{p}\in \mathbb{R}^{d-1}$ the projection of $p$ on $\p X=\mathbb{R}^{d-1}$. Given $a>0$, we denote $\hat{B}(p,a):= [0,1]\times \bar B_{\mathbb{R}^{d-1}}(\hat{p},a)$ where $\bar B_{\mathbb{R}^{d-1}}(\hat{p},a)$ is the close ball of center $\hat{p}$ and radius $A$ in the euclidean space $\mathbb{R}^{d-1}$. Given a large number $A>0$ and $r\in (0,+\infty)$, let us denote
$$
\gamma_{r,a}=\hat{B}(p,a)\bigcap (\Sigma_r \bigcap C_p)=( \hat{B}(p,a) \cap\Sigma_r  \cap(Q_p\cup L_p ))\bigcup (  \hat{B}(p,a) \cap\Sigma_r \cap N_p)=\gamma_{r,a}^{QL}\bigcup \gamma_{r,a}^{N}
$$
We set 
$$
\gamma_{r}=\bigcup_{a>0}\gamma_{r,a},\; \gamma_{r}^{QL}=\bigcup_{a>0} \gamma_{r,a}^{QL},\;   \gamma_{r}^{N}=\bigcup_{a>0} \gamma_{r,a}^{N}
$$

We define a mapping $\Delta: X_1\to  \p X\times (0,+\infty)=(0,+\infty)\times \R^{d-1} $
\beq
\Theta(x)=(r(x), \pi(x))=(r(x), (x_2,\cdots, x_d))
\eeq
where $\pi:  X_1\to \p X$ is the projection along $\rho$ (or $r$) foliation.

By the known results of Eilenberg inequality due to Federer \cite[Theorem 2.10.25]{Federer} and also the results in \cite[Lemmas 5.2 and 5.3]{LQS}, we have the following 

\begin{lemm}
\label{Lemma 2.5.2} Assume $(X,g^+)$ is a $C^{2,\alpha}$ with $\alpha\in (0,1)$ CCE  of dimension $d\ge 4$ and $g=\rho^2g^+$ is the adapted metric. For any given $p\in X$,  we have 
\begin{enumerate}
\item For all $t$ large, the set $\Gamma_t\cap X_{\rho_1}$ is a Lipschitz graph over $\pi(\Gamma_t\cap X_{\rho_1})\subset \p X$;
\item When $t$ is sufficiently large, the outward angle of the corner at the normal cut locus on $\Gamma_t$ is always less than $\pi$;
\item For almost every $r>0$, $\gamma^N_r$ has the locally finite $d-2$ dimensional Hausdorff measure ${\mathcal H}^{d-2}$ and consists of possibly countably many disjoint smooth hypersurfaces in $\Sigma_r$.  Moreover, in such case, at each point on $\gamma^N_r$, one has
$$
\frac{\p t}{\p n^+}+ \frac{\p t}{\p n^-}\ge 0
$$
\item  For almost every $r>0$, the $d-2$ dimensional Hausdorff measure on $\gamma_{r}^{QL}$ vanishes
$$
{\mathcal H}^{d-2}(\gamma_{r}^{QL})=0
$$
\end{enumerate}
\end{lemm}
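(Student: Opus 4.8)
The plan is to obtain the four assertions from three facts already available: the structural description of the cut locus $C_p=Q_p\cup L_p\cup N_p$ in Lemma~\ref{Lemma 2.5.1}; the monotonicity of $t=d_{g^+}(\cdot,p)$ along the $r$-foliation established in Steps~1--3; and the Eilenberg--Federer coarea inequality \cite[Theorem 2.10.25]{Federer}. This follows the scheme of \cite[Lemmas 5.2--5.3]{LQS}, adapted to the present almost-geodesic compactification.

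For (1), I would use that, by Step~2 and Lemma~\ref{Lemma 2.3}, every minimizing $g^+$-geodesic from $p$ to a point of $X_{\rho_1}$ enters $X_{\rho_1}$ and thereafter satisfies $\tfrac12\le\phi(s)\le|\nabla_+ r|_{g^+}\le1$; hence along each integral curve of $\nabla_+ r/|\nabla_+ r|_{g^+}$ — equivalently each adapted coordinate line $\tilde x=\mathrm{const}$ — the distance $t$ increases at a $g^+$-arclength rate lying in $[\tfrac12,1]$ wherever it is differentiable. Since $t$ is globally Lipschitz it is therefore strictly increasing along these lines inside $X_{\rho_1}$, so for each $\tilde x\in\pi(\Gamma_t\cap X_{\rho_1})$ there is a unique $r=h_t(\tilde x)$ with $(r,\tilde x)\in\Gamma_t$; the Lipschitz bound on $t$ together with $\partial_r t\ge\tfrac12$ then forces $h_t$ to be Lipschitz, which is (1). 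For (2), take $q\in N_p\cap\Gamma_t$ with $t$ large, so that $q\in X_{\rho_1}$ and (1) applies; by definition of $N_p$ there are exactly two minimizing geodesics $\gamma_1\ne\gamma_2$ from $p$ to $q$, and near $q$ one has $t=\min(t_1,t_2)$ with $t_i$ smooth, $t_i(q)=t(q)$ and $\nabla_+ t_i(q)=v_i$ the unit tangent to $\gamma_i$ at $q$, where $v_1\ne v_2$. Hence the superlevel set $\{t\ge t(q)\}=\{t_1\ge t(q)\}\cap\{t_2\ge t(q)\}$ is, to first order at $q$, the intersection of two distinct half-spaces, of opening angle $\pi-\angle(v_1,v_2)<\pi$; the monotonicity recalled above is what makes ``outward'' unambiguous.

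For (3), Lemma~\ref{Lemma 2.5.1} gives that $N_p$ is a countable disjoint union of smooth embedded hypersurfaces of $X$; applying Sard's theorem to $r$ restricted to each of them, for a.e.\ $r$ the level set $\Sigma_r$ meets $N_p$ transversally, so $\gamma^N_r=\Sigma_r\cap N_p$ is a countable disjoint union of smooth $(d-2)$-submanifolds of $\Sigma_r$. Local finiteness of $\mathcal{H}^{d-2}$ follows by applying \cite[Theorem 2.10.25]{Federer} to the Lipschitz function $r$: $\int^{*}\mathcal{H}^{d-2}(\gamma^N_r\cap K)\,dr\lesssim\mathcal{H}^{d-1}(N_p\cap K)<\infty$ for every compact $K$, since $N_p$ is $\mathcal{H}^{d-1}$-$\sigma$-finite, whence $\gamma^N_r$ has locally finite $\mathcal{H}^{d-2}$ for a.e.\ $r$. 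The inequality $\partial t/\partial n^{+}+\partial t/\partial n^{-}\ge0$ along $\gamma^N_r$ — with $n^{\pm}$ the conormals in $\Sigma_r$ outward to the two sides $\{t=t_1\}$ and $\{t=t_2\}$ — is the classical one-sided derivative estimate at normal cut loci, expressing that $t$ is concave across the cut locus (geodesics fail to minimize past it), and I would quote it from \cite{IT,O} and \cite[Lemma 5.3]{LQS}. Assertion (4) follows once more from \cite[Theorem 2.10.25]{Federer}: by Lemma~\ref{Lemma 2.5.1} the Hausdorff dimension of $Q_p\cup L_p$ is at most $d-2$, so $\mathcal{H}^{d-1}(Q_p\cup L_p)=0$, and applying the coarea inequality to $r$ gives $\int^{*}\mathcal{H}^{d-2}(\gamma^{QL}_r)\,dr\lesssim\mathcal{H}^{d-1}(Q_p\cup L_p)=0$, so $\mathcal{H}^{d-2}(\gamma^{QL}_r)=0$ for a.e.\ $r$.

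The genuinely geometric input, and the step I expect to be the main obstacle, is the one-sided derivative inequality in (3): it must be extracted from the cut-locus literature and adapted to the slicing by $\Sigma_r$ rather than by $\Gamma_t$. The rest is essentially bookkeeping, whose only real subtlety is verifying that the monotonicity/graph picture of Step~2 holds uniformly over all of $\Gamma_t\cap X_{\rho_1}$ for $t$ large — so that (1), (2) and the ``for a.e.\ $r$'' slicing statements of (3)--(4) are simultaneously meaningful — and transferring the general complete-manifold cut-locus results into the asymptotic coordinates $[0,\delta]\times\mathbb{R}^{d-1}$.
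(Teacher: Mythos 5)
Your proposal matches the paper's own proof in all essentials: assertions (1), (2) and the one-sided derivative inequality in (3) are imported from \cite[Lemmas 5.2 and 5.3]{LQS} (together with \cite{IT,O}), while the remaining parts of (3) and (4) come from Lemma \ref{Lemma 2.5.1} combined with the Eilenberg inequality \cite[Theorem 2.10.25]{Federer} and a Sard-type transversality argument for the slices $\Sigma_r\cap N_p$. The only deviations are cosmetic — you sketch direct proofs of (1)--(2) from the Step 2 monotonicity instead of merely citing \cite{LQS}, and you run the Eilenberg bookkeeping through $\mathcal{H}^{d-1}(Q_p\cup L_p)=0$ rather than through local finiteness of $\mathcal{H}^{d-2}$ and $\mathcal{H}^{d-3}$ of the slices, as the paper does — and both variants are sound.
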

\begin{proof}
The results in 1) and 2)  and the key observation-- the second part of 3) are obtained in \cite[Lemmas 5.2 and 5.3]{LQS}.  It follows from Lemma \ref{Lemma 2.5.1} that $Q_p\bigcup L_p$   has  locally finite $d-2$ dimensional Hausdorff measure and the set $N_p$ of normal cut loci consists of possibly countably many disjoint smooth hypersurfaces in $X$. Using the Eilenberg inequality \cite[Theorem 2.10.25]{Federer}, for almost every $r$,  $\gamma_{r}^{QL}$ has locally finite $d-3$ dimensional Hausdorff measure and $\gamma_{r}^{N}$ has locally finite $d-2$ dimensional Hausdorff measure. Hence,  ${\mathcal H}^{d-2}(\gamma_{r}^{QL})=0$ for almost every $r$. We state that $\Sigma_r$ is a $C^{2,\alpha}$ foliation so that we could get the rest result in 3). In fact, we could write locally $\Sigma_r$ as a graph $r=f(x_2,\cdots,x_d)$ or $x_2=f(r,x_3,\cdots, x_d)$ (we can change the index if necessary). In the latter case, the result is obvious. In the first case, by Sard's Lemma, the image of critical points by $f$ has the$1$-dimensional  Hausdorff measure zero. Hence, the desired result follows. Therefore, we finish the proof.
\end{proof}

{\it Step 4.} 
We denote 
$$
\bar\partial \hat{B}(p,a):=\{y\in \hat{B}(p,a), \|\pi(y)-\hat{p}\|=a\}
$$ 
the lateral boundary of the cylinder $ \hat{B}(p,a)$. It is clear that for any $y^1\in \bar\partial \hat{B}(p,a)$ we have
$$
d_g(y^1, p)\le d_g(y^1, (0,\pi(y^1)))+ d_g( (0,\pi(y^1)), (0,\hat{p}))+d_g( (0,\hat{p}), p)\le a+O(1)
$$
On the other hand, by  Theorem \ref{curvaturedecay}
$$
\frac{a^{1/(1+\varepsilon/2)}}{d((0,\pi(y^1)), (0,\hat{p}))}=o(1)
$$
Hence, the following {\bf estimate} holds 
\beq
a^{1/(1+\varepsilon/2)}\le d_g(y^1, p)\le 2a
\eeq
provided $a$ is sufficiently large. Then $s_1\ge 2(1-\beta)\ln a$ for all $\beta>\frac{\varepsilon}{1+\varepsilon/2}$. In fact, we assume $M=\max_{s\in [0,s_1]} \rho(\gamma(s))$. If $M\le a^{1-\beta}$, then
$s_1\ge a^{-1+\beta} d_{g}(p,y^1)=a^{\beta-\frac{\varepsilon}{1+\varepsilon/2}}\ge 2\ln a$ provided $a$ is sufficiently large. Otherwise, $M\ge a^{1-\beta}$. Let $\bar s \in (0,s_1)$ such that $\rho(\gamma(\bar s))=M$. We denote $y2=\gamma(\bar s)$.  We could use $\rho$ as parameter of $\gamma(s)$ thus 
$\bar s\le \int_1^M \frac{1}{\rho}d\rho\le (1-\beta)\ln a$ since $|\bar\nabla \rho|\le 1$. With the similar argument, $s_1-\bar s\le (1-\beta)\ln a$. Thus, we prove the claim. Let $\hat{p}$ be the orthogonal projection of $p$ on the boundary $\R^{d-1}:= \p X$ with respect to the  adapted metric $g$. More precisely, with the same arguments, we can prove $s_1\ge \frac{2}{1+\varepsilon/2}\ln a-\ln 2\ln a$. Recall $\Psi= e^{\frac{(d-3)}{2}u}$ so that 
$$0\le \Psi(x)= e^{\frac{(d-3)}{2}(u(x)-u(\gamma(s_1))} e^{\frac{(d-3)}{2}u(\gamma(s_1))}\le C  e^{\frac{(d-3)}{2}u(\gamma(s_1))} \le C\frac{(2\ln a)^{(d-3)/2}}{a^{(d-3)/(1+\varepsilon/2)}}$$ 
on $\Sigma_r\cap\p  \hat B(p,a)$. Also, it follows from Step 3 we have  on $\Sigma_r\cap\p \hat B(p,a)$
\beq
|\bar\nabla\Psi(x)|= \left|\frac{d-3}{2}\Psi(x)\bar\nabla u(x)\right| \le C\frac{(2\ln a)^{(d-3)/2}}{a^{(d-3)/(1+\varepsilon/2)}}
\eeq
As a consequence, we have
\begin{lemm} \label{Lemma2.8}
Assume $d\ge 5$. We have for almost every $r>-\ln \rho_1$ sufficiently large
$$\oint_{\p (\hat{B}(p,A)\cap \Sigma_r)\setminus C_p}  \Psi\frac{\p \Psi}{\p n}= O(\frac{(2\ln a)^{2(d-3)}}{a^{(2(d-3)/(1+\varepsilon/2))-(d-2)}})$$ 
where $n$ is the outwards unit normal vector.
\end{lemm}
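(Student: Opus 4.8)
The plan is to reduce the estimate to a pointwise bound on the integrand times a bound on the area of the domain of integration; both ingredients are, up to one elementary observation, already in hand from the displays immediately preceding the statement. Concretely, I would start from
$$
\Bigl|\oint_{\partial(\hat B(p,a)\cap\Sigma_r)\setminus C_p}\Psi\,\frac{\partial\Psi}{\partial n}\Bigr|\ \le\ \Bigl(\sup_{\Sigma_r\cap\partial\hat B(p,a)}|\Psi|\Bigr)\Bigl(\sup_{\Sigma_r\cap\partial\hat B(p,a)}|\bar\nabla\Psi|_{\bar g}\Bigr)\ \mathcal H^{d-2}\bigl(\partial(\hat B(p,a)\cap\Sigma_r)\setminus C_p\bigr),
$$
using that $n$ is a $\bar g$-unit conormal so $\bigl|\tfrac{\partial\Psi}{\partial n}\bigr|=|\langle\bar\nabla\Psi,n\rangle_{\bar g}|\le|\bar\nabla\Psi|_{\bar g}$; the two suprema are precisely the quantities bounded just above by $C(2\ln a)^{(d-3)/2}a^{-(d-3)/(1+\varepsilon/2)}$.

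The one new ingredient to supply is the area bound $\mathcal H^{d-2}\bigl(\partial(\hat B(p,a)\cap\Sigma_r)\setminus C_p\bigr)\le Ca^{d-2}$. For this I would work in the adapted coordinates $(x_1=\rho,x_2,\dots,x_d)$ on $X_\delta$, in which $\Sigma_r$ is the flat slice $\{x_1=e^{-2r}\}$ and, for $r>-\ln\rho_1$, lies inside $X_{\rho_1}$ where $g_{ij}=\delta_{ij}+O(\rho^2)$. Then $\hat B(p,a)\cap\Sigma_r=\{x_1=e^{-2r}\}\times\bar B_{\R^{d-1}}(\hat p,a)$, so its boundary inside the hypersurface $\Sigma_r$ is the round $(d-2)$-sphere $\{x_1=e^{-2r}\}\times\partial B_{\R^{d-1}}(\hat p,a)$, of Euclidean $(d-2)$-volume $\omega_{d-2}a^{d-2}$; since $\rho=e^{-2r}$ is small on this slice the $g$-induced $(d-2)$-volume is $\omega_{d-2}a^{d-2}(1+o(1))$, and deleting the closed set $C_p$ only decreases it. I would also record, via the structure of $C_p$ in Lemma \ref{Lemma 2.5.1} and the ``almost every $r$'' slicing in Lemma \ref{Lemma 2.5.2}, that $\Sigma_r\cap C_p$ is $(d-2)$-rectifiable of locally finite $\mathcal H^{d-2}$-measure, so that $\partial(\hat B(p,a)\cap\Sigma_r)\setminus C_p$ is an honest rectifiable hypersurface of $\Sigma_r$ on which $\Psi$ is $C^2$ and the surface integral is well defined.

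Assembling the two pieces gives, pointwise on $\partial(\hat B(p,a)\cap\Sigma_r)\setminus C_p$, the bound $|\Psi\,\partial_n\Psi|\le|\Psi|\,|\bar\nabla\Psi|_{\bar g}\le C^2(2\ln a)^{d-3}a^{-2(d-3)/(1+\varepsilon/2)}$, and multiplying by the area bound $Ca^{d-2}$ yields $O\bigl((2\ln a)^{d-3}a^{-(2(d-3)/(1+\varepsilon/2)-(d-2))}\bigr)$. Since $2\ln a\ge1$ for $a$ large and $d\ge5$, this is a fortiori $O\bigl((2\ln a)^{2(d-3)}a^{-(2(d-3)/(1+\varepsilon/2)-(d-2))}\bigr)$, the asserted bound; one may also note that for $d\ge5$ and $\varepsilon<\tfrac{1}{2}$ the exponent $\tfrac{2(d-3)}{1+\varepsilon/2}-(d-2)$ is strictly positive, so this boundary term in fact tends to $0$ as $a\to\infty$.

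The only genuinely delicate point is the cut-locus bookkeeping in the second step. The estimates for $\Psi$ and $\bar\nabla\Psi$ invoked here were derived in Step 4 along minimizing $g^+$-geodesics and are therefore only available off $C_p$, so it is essential to know that discarding $C_p$ leaves a set still carrying the expected $(d-2)$-measure and on which the divergence-theorem boundary term makes classical sense — which is exactly what Lemmas \ref{Lemma 2.5.1}--\ref{Lemma 2.5.2} provide once $r$ is chosen outside a null set. Everything else is a direct substitution of bounds already established, with no further computation.
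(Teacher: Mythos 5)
Your proposal is correct and follows essentially the same route as the paper: the lemma is proved by multiplying the pointwise bounds on $\Psi$ and $|\bar\nabla\Psi|$ from Step 4 by the $O(a^{d-2})$ area of the lateral boundary $\p(\hat B(p,a)\cap\Sigma_r)$, which the paper obtains from the $C^{2,\alpha}$ convergence of $(\Sigma_r,\bar g_r)$ to flat $\mathbb{R}^{d-1}$ and you obtain equivalently from the expansion $g_{ij}=\delta_{ij}+O(\rho^2)$ in the adapted coordinates. Your additional remarks (that removing $C_p$ only decreases the measure, and that the direct product even gives the sharper power $(2\ln a)^{d-3}$, which a fortiori implies the stated $(2\ln a)^{2(d-3)}$ bound) are consistent with the paper's argument.
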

\begin{proof}
We remark that $(\Sigma_r, \bar g_r)$ converges to $(\mathbb{R}^{d-1}, g_{\mathbb{R}^{d-1}})$ in $C^{2,\alpha}$ topology. Hence, for $r$ large, there holds
$$
Vol(\p (\hat{B}(p,A)\cap \Sigma_r))=O(a^{d-2})
$$
which yields
$$
\oint_{\p (\hat{B}(p,A)\cap \Sigma_r)\setminus C_p}  \Psi\frac{\p \Psi}{\p n}=O(\frac{(2\ln a)^{2(d-3)}}{a^{(2(d-3)/(1+\varepsilon/2))-(d-2)}}).
$$

\end{proof}

{\it Step 5.}  By the Gauss-Codazzi equation, we have
\begin{lemm}
On $X_1\bigcap (X\setminus C_p)$, there holds
\label{Lemma 2.9}
\beq
\label{eq2.34}
R_{\tilde g_r}\le \frac{(d-1)(d-2)}{1-e^{-2t}}+\frac{d-2}2e^{2t}(1-\frac{g^+(\nabla_+ r, \nabla_+ t)^2}{|\nabla_+ r|^2_{g^+}})(1-\nabla_+^2 t(\nu,\nu))+e^{2t}O(e^{-4r})
\eeq
for some unit vector $\nu\perp \nabla_+ t$.
\end{lemm}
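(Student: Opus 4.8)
The plan is to combine the contracted Gauss equation for the hypersurface $\Sigma_r\subset(X,\tilde g)$ with the conformal transformation laws relating $\tilde g=4e^{-2t}g^+=e^{2\omega}g^+$, $\omega=\log 2-t$, to the Einstein metric $g^+$, and finally with the Laplacian comparison theorem. Since $\Sigma_r=\{r=\mathrm{const}\}$, the $\tilde g$-unit normal of $\Sigma_r$ is a positive multiple of $\nabla_+ r$, namely $\hat n=e^{-\omega}\nabla_+ r/|\nabla_+ r|_{g^+}$. Writing $H$ for the mean curvature of $\Sigma_r$ in $\tilde g$ and discarding the nonnegative squared norm of the second fundamental form, the Gauss equation for the induced scalar curvature gives, at any point of $X\setminus C_p$ (where $t$ is $C^2$),
$$R_{\tilde g_r}\ \le\ R[\tilde g]-2\,Ric[\tilde g](\hat n,\hat n)+H^2 .$$

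I would first evaluate the two ambient terms using the conformal transformation formulas (\ref{eq2.21})--(\ref{eq2.22}) with $r$ replaced by $t$ (legitimate since $\tilde g=4e^{-2t}g^+$), together with $Ric[g^+]=-(d-1)g^+$, $|\nabla_+ t|_{g^+}\equiv 1$ and $\nabla_+^2 t(\nabla_+ t,\cdot)=0$; this yields $R[\tilde g]=\frac{d-1}{2}e^{2t}(\Delta_+ t-(d-1))$ and $Ric[\tilde g]=(d-2)\nabla_+^2 t+(d-2)\,dt\otimes dt+(\Delta_+ t-(2d-3))g^+$. Decomposing $\nabla_+ r=\phi\,\nabla_+ t+w$ with $w\perp\nabla_+ t$, $\phi=g^+(\nabla_+ r,\nabla_+ t)$ and $|w|^2=|\nabla_+ r|_{g^+}^2-\phi^2$, and setting $\nu=w/|w|$ (a unit vector orthogonal to $\nabla_+ t$ — this is the $\nu$ of the statement), the identity $\nabla_+^2 t(\nabla_+ r,\nabla_+ r)=|w|^2\,\nabla_+^2 t(\nu,\nu)$ leads after a short computation to
$$R[\tilde g]-2\,Ric[\tilde g](\hat n,\hat n)=\frac{d-2}{2}e^{2t}\big(\Delta_+ t-(d-1)\big)+\frac{d-2}{2}e^{2t}\Big(1-\frac{\phi^2}{|\nabla_+ r|_{g^+}^2}\Big)\big(1-\nabla_+^2 t(\nu,\nu)\big),$$
whose second summand is already the middle term of the asserted inequality.

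For the first summand I would invoke the Laplacian comparison theorem: since $Ric[g^+]=-(d-1)g^+$ we have $\Delta_+ t\le (d-1)\coth t$ at every smooth point of $t$, and using $\coth t-1=\frac{2e^{-2t}}{1-e^{-2t}}$ this gives $\frac{d-2}{2}e^{2t}(\Delta_+ t-(d-1))\le \frac{(d-1)(d-2)}{1-e^{-2t}}$. It then remains to absorb $H^2$ into the error term: by the conformal transformation law for mean curvature, $H=e^{-\omega}\big(H^{g^+}_{\Sigma_r}-(d-1)\,\phi/|\nabla_+ r|_{g^+}\big)$; the estimate (\ref{Lemma2.2}) gives $H^{g^+}_{\Sigma_r}=(d-1)+O(e^{-2r})$, while the bound from Step 3 that the component of $\nabla_+ t$ orthogonal to $\nabla_+ r$ is $O(\rho)$ is equivalent to $1-\phi^2/|\nabla_+ r|_{g^+}^2=O(\rho^2)=O(e^{-4r})$, whence $1-\phi/|\nabla_+ r|_{g^+}=O(e^{-4r})$ and therefore $H=\frac12 e^{t}\,O(e^{-2r})$, $H^2=e^{2t}O(e^{-4r})$. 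Assembling the three pieces produces (\ref{eq2.34}).

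The step I expect to be the main obstacle is the error bookkeeping in the last paragraph — quantifying the $C^2$-closeness of the foliation $\Sigma_r$ to the geodesic spheres $\Gamma_t$ finely enough to see that $H^2$ really is of the stated order $e^{2t}O(e^{-4r})$. This is exactly the point where one must simultaneously use the Hessian estimate (\ref{Lemma2.2}) for the geodesic defining function $r$ and the gradient bound on $u=r-t$ from Step 3 (which controls the angle between the normals $\nabla_+ r$ and $\nabla_+ t$). It is also the reason the statement is confined to $X_1\cap(X\setminus C_p)$: both the Gauss equation and the Laplacian comparison used above are only valid pointwise where the distance function $t$ is $C^2$, the behaviour of $t$ across the cut locus being treated separately through Lemmas \ref{Lemma 2.5.1} and \ref{Lemma 2.5.2}.
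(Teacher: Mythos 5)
Your proposal is correct and is essentially the paper's own argument: the paper likewise applies the contracted Gauss--Codazzi equation on $\Sigma_r$ in $\tilde g$, the conformal formulas (\ref{eq2.21})--(\ref{eq2.22}) with $t$ in place of $r$, the decomposition (\ref{eq2.39}) together with $\nabla_+^2 t(\nabla_+ t,\cdot)=0$, and the Laplacian comparison $\Delta_+ t\le (d-1)\coth t$, the only cosmetic difference being that the paper bounds the full second fundamental form, $h_{ij}=O(e^{t-2r})$, so that both $H^2$ and $-|II|^2$ go into the error, whereas you discard $-|II|^2$ by sign and estimate $H$ through the conformal mean-curvature law. One harmless slip: with the paper's normalization $\rho\asymp e^{-r}$ one only gets $1-\phi^2/|\nabla_+ r|_{g^+}^2=O(\rho^2)=O(e^{-2r})$, not $O(e^{-4r})$, but since $H^{g^+}_{\Sigma_r}-(d-1)=O(e^{-2r})$ in any case, your conclusion $H=\frac12 e^t O(e^{-2r})$ and hence $H^2=e^{2t}O(e^{-4r})$ is unaffected.
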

\begin{proof}
Recall on the $\Sigma_r$, we have the induced metric $g^+_r$ (resp. $\tilde g_r$) from $g^+$ (resp.  $\tilde g=e^{-2t}g^+$). Let $\{e_1,\cdots, e_{d-1}\}$ be an orthonormal basis on $(\Sigma_r, g^+_r)$ so that $\{\tilde e_1,\cdots, \tilde e_{d-1}\}= \{e^t e_1,\cdots, e^t e_{d-1}\}$  is an orthonormal basis on $(\Sigma_r, \tilde g_r)$. And $N=e^t\nabla_+ r/(2|\nabla_+ r|_{g^+})$ is the unit normal to the level set $(\Sigma_r, \tilde g_r)$. Let $\tilde \nabla$ be the Levi-Civita connection with respect to $\tilde g$. We could calculate the second fundamental form with respect to the metric $\tilde g$ on $(\Sigma_r, \tilde g_r)$ 
$$
II[\tilde g] (\tilde e_i, \tilde e_j)=\tilde g(\tilde\nabla_{\tilde e_i} N, \tilde e_j)=e^t/(2|\nabla_+ r|_{g^+})(\nabla_+^2 r(e_i,e_j)-g^+(\nabla_+ t,\nabla_+ r)\delta_{ij})
$$
which implies by Lemmas \ref{Lemma2.2} and \ref{Lemma 2.5} 
\beq
\label{secondform}
h_{ij}=II[\tilde g] (\tilde e_i, \tilde e_j)=\tilde g(\tilde\nabla_{\tilde e_i} N, \tilde e_j)=e^t/(2|\nabla_+ r|_{g^+})(\nabla_+^2 r(e_i,e_j)-g^+(\nabla_+ t,\nabla_+ r)\delta_{ij}) =O( e^{t-2r})
\eeq
Here we use the fact in Lemma \ref{Lemma 2.4} that $r(\gamma(s))= t(\gamma(s))-s_1+O(1)$ for all $s\ge s_1$.  Recall the Gauss-Codazzi equation
\beq
\label{Gauss-Codazzi}
R_{\tilde g_r}=R[\tilde g]-2Ric[\tilde g](N,N)+(\sum_i h_{ii})^2-\sum_{ij} (h_{ij})^2
\eeq
Together with the equations (\ref{eq2.21}), (\ref{eq2.22}) and (\ref{secondform}), we infer
\beq
\label{scalarest}
R_{\tilde g_r}=e^{2t}\frac{d-2}{2}(\triangle_+ t-(d-1)+(1-\frac{g^+(\nabla_+ r, \nabla_+ t)^2}{|\nabla_+ r|^2})-\nabla_+^2 t(\frac{\nabla_+ r}{|\nabla_+ r|},\frac{\nabla_+ r}{|\nabla_+ r|_{g^+}})+O(e^{-4r}))
\eeq
By the Laplace comparison theorem, we have
\beq
\label{eq2.38}
\triangle_+ t\le (d-1)\coth t
\eeq
On the other hand, we have the decomposition
\beq
\label{eq2.39}
\frac{\nabla_+ r}{|\nabla_+ r|_{g^+}}= \frac{g^+(\nabla_+ r, \nabla_+ t)}{|\nabla_+ r|_{g^+} }\nabla_+ t+ \sqrt{1-\frac{g^+(\nabla_+ r, \nabla_+ t)^2}{|\nabla_+ r|_{g^+}^2}}\nu
\eeq
for some unit vector $\nu\perp \nabla_+ t$ and 
\beq
\label{eq2.40}
\nabla_+^2 t(\nabla_+ t, \cdot)=0
\eeq
since $|\nabla_+ t|_{g^+}\equiv 1$.  Gathering (\ref{scalarest}) to (\ref{eq2.40}), we deduce
\beq
\label{scalarest1}
R_{\tilde g_r}\le \frac{(d-1)(d-2)}{1-e^{-2t}}+ e^{2t}\frac{d-2}{2}((1-\frac{g^+(\nabla_+ r, \nabla_+ t)^2}{|\nabla_+ r|_{g^+}^2})(1-\nabla_+^2 t(\nu,\nu))+O(e^{-4r}))
\eeq
Finally, we prove the desired result (\ref{eq2.34}).
\end{proof}

{\it Step 6.} 
We know $\Psi$ is a uniformly bounded and uniformly Lipschitz function on $(X\setminus C_p)\bigcap X_1$ so that it could be extended to a uniformly bounded and uniformly Lipschitz function on $\overline{(X\setminus C_p)\bigcap X_1}=\overline{X_1}$.
\begin{lemm} 
\label{Lemma 2.10}
Given any $a>1$ large and $\eta>0$ small, we have for almost every large $r$ there exists some finite union $B_{r,\eta}$ of balls with radius small than $\eta$ covering $\gamma^{QL}_{r,N}$ such that 
\beq
\label{eq2.42}
\begin{array}{ll}
&\ds  \frac{4(d-2)}{d-3}\int_{\bar B_{\mathbb{R}^{d-1}}(\hat{p},a)} |\bar\nabla \Psi|^2 dvol [g_{\mathbb{R}^{d-1}}]\\
 \le &\ds  \int_{(\Sigma_r\cap \hat{B}(p,a))\setminus (B_{r,\eta}\cup \gamma_{r,A}^N)}R_{\tilde g_r} dvol_{\tilde g_r}+ O(\eta)+ O(\frac{(2\ln a)^{2(d-3)}}{a^{(2(d-3)/(1+\varepsilon/2))-(d-2)}})+o_r(1)
\end{array}
\eeq
and
\beq 
\label{eq2.43}
\displaystyle  \int_{\bar B_{\mathbb{R}^{d-1}}(\hat{p},a)}\Psi^{2(d-1)/(d-3)} dvol [g_{\mathbb{R}^{d-1}}]= vol_{\tilde g_r}(\Sigma_r\cap \hat{B}(p,a))+o_r(1)
\eeq
where $o_r(1)$ is independent of $\eta$ and $o_r(1)\to 0$ as $r\to\infty$.
\end{lemm}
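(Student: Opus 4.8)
The plan is to obtain both assertions from the conformal transformation law for the scalar curvature in dimension $n=d-1$, applied to the conformal pair $\tilde g_r=\Psi^{4/(d-3)}\bar g_r$ on the level hypersurface $\Sigma_r$, followed by a single integration by parts whose only real subtlety is the excision of the cut locus $C_p$. First, on $\Sigma_r\setminus C_p$ (where $\bar g_r,\tilde g_r$ are smooth and $\Psi$ is smooth and positive) I would record the pointwise relations $dvol_{\tilde g_r}=\Psi^{2(d-1)/(d-3)}\,dvol_{\bar g_r}$ and $R_{\tilde g_r}=\Psi^{-(d+1)/(d-3)}\big(-\tfrac{4(d-2)}{d-3}\Delta_{\bar g_r}\Psi+R_{\bar g_r}\Psi\big)$, hence $R_{\tilde g_r}\,dvol_{\tilde g_r}=\Psi\big(-\tfrac{4(d-2)}{d-3}\Delta_{\bar g_r}\Psi+R_{\bar g_r}\Psi\big)\,dvol_{\bar g_r}$. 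The identity \eqref{eq2.43} is then immediate, since $vol_{\tilde g_r}(\Sigma_r\cap\hat B(p,a))=\int_{\Sigma_r\cap\hat B(p,a)}\Psi^{2(d-1)/(d-3)}\,dvol_{\bar g_r}$ and, identifying $\Sigma_r\cap\hat B(p,a)$ with $\bar B_{\R^{d-1}}(\hat p,a)$ via the projection $\pi$ (Lemma \ref{Lemma 2.5.2}(1)) and using the $C^{2,\alpha}$ convergence $(\Sigma_r,\bar g_r)\to(\R^{d-1},g_{\R^{d-1}})$ recalled in the proof of Lemma \ref{Lemma2.8} together with the uniform bound on $\Psi$, replacing $dvol_{\bar g_r}$ by $dvol[g_{\R^{d-1}}]$ costs only $o_r(1)$.

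For \eqref{eq2.42} I would fix a large $r$ at which all conclusions of Lemma \ref{Lemma 2.5.2} hold and excise the cut locus before integrating by parts. Using $\mathcal H^{d-2}(\gamma_r^{QL})=0$ (Lemma \ref{Lemma 2.5.2}(4)) and the compactness of $\gamma_r^{QL}\cap\hat B(p,a)$, I would choose a finite family $B_{r,\eta}$ of balls of radii $r_i<\eta$ covering $\gamma_r^{QL}$ with $\sum_i r_i^{d-2}<\eta$, so that $\mathcal H^{d-2}(\partial B_{r,\eta})=O(\eta)$ and $\mathcal H^{d-1}(B_{r,\eta})=O(\eta)$. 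Setting $\Omega:=(\Sigma_r\cap\hat B(p,a))\setminus(B_{r,\eta}\cup\gamma_r^N)$, Lemma \ref{Lemma 2.5.2}(1)--(3) shows this is a countable union of domains with Lipschitz boundary on each of which $\Psi$ is smooth, while $\gamma_r^N$ is a countable disjoint union of smooth hypersurfaces of locally finite $\mathcal H^{d-2}$ measure. Green's first identity applied on $\Omega$ to the pointwise relation above gives
\[
\int_\Omega R_{\tilde g_r}\,dvol_{\tilde g_r}=\frac{4(d-2)}{d-3}\int_\Omega|\bar\nabla\Psi|^2_{\bar g_r}\,dvol_{\bar g_r}-\frac{4(d-2)}{d-3}\oint_{\partial\Omega}\Psi\,\frac{\partial\Psi}{\partial n}+\int_\Omega R_{\bar g_r}\Psi^2\,dvol_{\bar g_r}.
\]

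Next I would bound the three pieces of $\partial\Omega$. On the lateral part $\partial(\hat B(p,a)\cap\Sigma_r)\setminus C_p$, Lemma \ref{Lemma2.8} gives exactly the contribution $O\big((2\ln a)^{2(d-3)}/a^{\,2(d-3)/(1+\varepsilon/2)-(d-2)}\big)$. On the spheres $\partial B_{r,\eta}$ one has $|\Psi\,\partial_n\Psi|\le C$ by the uniform boundedness and Lipschitz bound for $\Psi$ on $\overline{X_1}$, so the contribution is $O(\mathcal H^{d-2}(\partial B_{r,\eta}))=O(\eta)$. On the two copies of (pieces of) $\gamma_r^N$, with outward normals $n^+,n^-$ from the two sides: since $r$ is constant on $\Sigma_r$, $\Psi|_{\Sigma_r}$ is a positive multiple of $e^{-\frac{d-3}{2}t}$, whence $\partial_{n^+}\Psi+\partial_{n^-}\Psi=-\tfrac{d-3}{2}\Psi\big(\partial_{n^+}t+\partial_{n^-}t\big)\le0$ by Lemma \ref{Lemma 2.5.2}(3), so this piece contributes a nonpositive amount to $\oint_{\partial\Omega}\Psi\,\partial_n\Psi$ and may be discarded. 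It remains to pass back to the flat ball: $\int_\Omega R_{\bar g_r}\Psi^2=o_r(1)$ because $R_{\bar g_r}\to0$ uniformly on the fixed set $\hat B(p,a)$ while $\Psi$ and $vol_{\bar g_r}$ are bounded there; deleting $B_{r,\eta}$ costs at most $C\,\mathcal H^{d-1}(B_{r,\eta})=O(\eta)$ in the gradient integral; and the uniform Lipschitz bound together with the $C^{2,\alpha}$ convergence of $\bar g_r$ to $g_{\R^{d-1}}$ gives $\int_{\Sigma_r\cap\hat B(p,a)}|\bar\nabla\Psi|^2_{\bar g_r}\,dvol_{\bar g_r}=\int_{\bar B_{\R^{d-1}}(\hat p,a)}|\bar\nabla\Psi|^2\,dvol[g_{\R^{d-1}}]+o_r(1)$. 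Substituting these into the displayed identity and keeping only the favorable sign of the discarded $\gamma_r^N$ term yields \eqref{eq2.42}.

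The genuinely delicate step is this excision and integration by parts across the singular set. One must verify that $\gamma_r^{QL}$ --- a priori only of zero $(d-2)$-Hausdorff measure, though in fact of locally finite $(d-3)$-measure --- can be removed at the price of $O(\eta)$ in every boundary term; that $\Omega$, despite having countably many components and a non-smooth boundary, is a legitimate domain for Green's identity (it is precisely the definite sign of the $\gamma_r^N$ integrand that lets one sum the pieces safely); and, most importantly, that Lemma \ref{Lemma 2.5.2}(3) is applied with the two outward normals oriented correctly, so that the normal-cut-locus term enters with the favorable sign. The conformal transformation law, the uniform estimates inherited from the asymptotic expansion, and Green's identity on each smooth component are otherwise routine.
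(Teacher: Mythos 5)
Your proposal follows essentially the same route as the paper: the conformal transformation law for $R_{\tilde g_r}$ with $\tilde g_r=\Psi^{4/(d-3)}\bar g_r$, excision of $\gamma_r^{QL}$ by finitely many balls with $\sum_i r_i^{d-2}\le\eta$, Green's identity on the excised domain, the lateral boundary handled by Lemma \ref{Lemma2.8}, the normal cut locus discarded by the sign condition of Lemma \ref{Lemma 2.5.2}(3) (with the same reduction $\partial_{n^\pm}\Psi=-\tfrac{d-3}{2}\Psi\,\partial_{n^\pm}t$ along $\Sigma_r$), the $O(\eta)$ bounds on the excised balls, and $\int R[\bar g_r]\Psi^2=o_r(1)$.

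The one point where you overstate is the final passage to the flat ball: you claim $\int_{\Sigma_r\cap\hat B(p,a)}|\bar\nabla\Psi|^2\,dvol[\bar g_r]=\int_{\bar B_{\R^{d-1}}(\hat p,a)}|\bar\nabla\Psi|^2\,dvol[g_{\R^{d-1}}]+o_r(1)$, justified only by the uniform Lipschitz bound and the convergence of the slice metrics. That gives $C^0$ convergence of $\Psi|_{\Sigma_r}$ and weak $H^1$ convergence on compact sets, which does not yield convergence of the Dirichlet energies (gradients need not converge strongly in $L^2$); only the lower semicontinuity inequality $\int_{\bar B_{\R^{d-1}}(\hat p,a)}|\bar\nabla\Psi|^2\le\liminf_r\int_{\Sigma_r\cap\hat B(p,a)}|\bar\nabla\Psi|^2$ is available, and this is exactly what the paper uses. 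Since \eqref{eq2.42} is an inequality in precisely that direction, replacing your asserted equality by the lower semicontinuity statement closes the argument; with that correction your proof coincides with the paper's. (For \eqref{eq2.43} the claimed equality is fine, since there the integrand $\Psi^{2(d-1)/(d-3)}$ itself converges uniformly on the compact set, no gradients being involved.)
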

\begin{proof}
Because $\rho^2 g^+$ is $C^{2,\alpha}$ and its injectivity radii is bounded below, we have $R[\bar g_r]$ converges uniformly to $0$ as $r$ tends to $\infty$. Recall $\tilde g= \Psi^{\frac4{d-3}} \bar g$. By the conformal change, we infer on $(X\setminus C_p)\bigcap X_1$
$$
R[\tilde g_r]= \Psi^{-\frac{d+1}{d-3}}(-\frac{4(d-2)}{d-3}\triangle[\bar g_r] \Psi+ R[\bar g_r] \Psi). 
$$
On the other hand,  by Lemma \ref{Lemma 2.5.2}, for almost every $r>0$, we have 
$$
\mathcal{H}^{d-2}(\gamma_{r,a}^{QL})=0,\;\; \mathcal{H}^{d-2}(\gamma_{r,a}^{N})<\infty.
$$
And  $\gamma_{r,a}^{N}$ consists of possibly countably many disjoint smooth hypersurfaces in $\Sigma_r$. Denote $\eta>0$ some small positive number. Let $B_{r,\eta}=\cup_i B(x_i,r_i)$ be finite the union of the balls with $x_i\in \gamma_{r,A}^{QL}$ covering $\gamma_{r,A}^{QL}$ such that $\sum_i r_i^{d-2}\le \eta$ and $r_i<1$ for all $i$ (in the following, the geodesic balls are always related to the metric $\bar g$ if there is no confusion).  We estimate
$$
\begin{array}{ll}
&\ds\int_{(\hat{B}(p,a)\cap \Sigma_r)\setminus (\cup_i B(x_i,r_i)\cup \gamma_{r,a}^{N})} R[\tilde g_r] dvol[\tilde g_r]\\
=&\ds\int_{(\hat{B}(p,a)\cap \Sigma_r)\setminus (\cup_i B(x_i,r_i)\cup \gamma_{r,a}^{N})} 
\Psi(-\frac{4(d-2)}{d-3}\triangle[\bar g_r] \Psi+ R[\bar g_r] \Psi)dvol[\bar g_r]\\
=& \ds\int_{(\hat{B}(p,a)\cap \Sigma_r)\setminus (\cup_i B(x_i,r_i)\cup \gamma_{r,a}^{N})} \frac{4(d-2)}{d-3}|\bar\nabla \Psi|^2+ R[\bar g_r] \Psi^2dvol[\bar g_r]\\
&\ds +\oint_{\p((\hat{B}(p,a)\cap \Sigma_r)\setminus (\cup_i B(x_i,r_i)\cup \gamma_{r,a}^{N}))} \frac{4(d-2)}{d-3}\Psi(\frac{\p \Psi}{\p n}d\sigma)[\bar g_r]
\end{array}
$$
We write
$$
\begin{array}{ll}
&\ds \oint_{\p((\hat{B}(p,a)\cap \Sigma_r)\setminus (\cup_i B(x_i,r_i)\cup \gamma_{r,a}^{N}))} \Psi(\frac{\p \Psi}{\p n}d\sigma)[\bar g_r]\\
=&\ds \oint_{(\hat{B}(p,a)\cap \Sigma_r)\cap \p (\cup_i B(x_i,r_i))} \Psi(\frac{\p \Psi}{\p n}d\sigma)[\bar g_r]+ \oint_{\p(\hat{B}(p,a)\cap \Sigma_r)\setminus (\cup_i B(x_i,r_i)\cup \gamma_{r,a}^{N})} \Psi(\frac{\p \Psi}{\p n}d\sigma)[\bar g_r]\\
&+ \ds \oint_{\gamma_{r,a}^{N}\setminus (\cup_i B(x_i,r_i))} -\frac{d-3}{2}\Psi^2((\frac{\p t}{\p n^+}+\frac{\p t}{\p n^-})d\sigma)[\bar g_r]
\end{array}
$$
Thanks of Lemmas \ref{Lemma 2.5.2} and \ref{Lemma2.8}, there holds
$$
\begin{array}{ll}
\ds -\frac{d-3}{2}\oint_{\gamma_{r,a}^{N}\setminus (\cup_i B(x_i,r_i))}\Psi^2((\frac{\p t}{\p n^+}+\frac{\p t}{\p n^-})d\sigma)[\bar g_r] \le 0\\
\ds\oint_{\p(\hat{B}(p,a)\cap \Sigma_r)\setminus (\cup_i B(x_i,r_i)\cup \gamma_{r,R}^{N})} \Psi(\frac{\p \Psi}{\p n}d\sigma)[\bar g_r]=O(\frac{(2\ln a)^{2(d-3)}}{a^{(2(d-3)/(1+\varepsilon/2))-(d-2)}})
\end{array}
$$
On the other hand, we have
$$
\begin{array}{ll}
\ds \oint_{(\hat{B}(p,a)\cap \Sigma_r)\cap \p (\cup_i B(x_i,r_i))} \Psi(\frac{\p \Psi}{\p n}d\sigma)[\bar g_r]=O(\eta)\\
\ds\int_{(\hat{B}(p,a)\cap \Sigma_r)\cap (\cup_i B(x_i,r_i))} \frac{4(d-2)}{d-3}|\bar\nabla \Psi|^2=O(\eta)
\end{array}
$$
since $\sum_i r_i^{d-1}\le \sum_i r_i^{d-2}\le \eta$. Also, as  $R[\bar g_r] $ converges uniformly to $0$, we infer
$$
\int_{\hat{B}(p,a)\cap \Sigma_r} | R[\bar g_r]| \Psi^2dvol[\bar g_r]= o_r(1)
$$
Gathering all above estimates, we derive for almost every $r$
$$
\begin{array}{ll}
&\ds\frac{4(d-2)}{d-3}\int_{\hat{B}(p,a)\cap \Sigma_r} |\bar\nabla \Psi|^2  dvol [\bar g_r]\\
 \le&\ds  \int_{(\Sigma_r\cap \hat{B}(p,a))\setminus (B_{r,\eta}\cup \gamma_{r,A}^N)}R_{\tilde g_r} dvol_{\tilde g_r}+ O(\eta)+ O(\frac{(2\ln a)^{2(d-3)}}{a^{(2(d-3)/(1+\varepsilon/2))-(d-2)}})+o_r(1)
 \end{array}
$$
We know $\Psi$ is a uniformly bounded and uniformly Lipschitz function on $\overline{X_1}$ so that $\Psi|_{\Sigma_r}\to \Psi|_{\mathbb{R}^{d-1}}$ in $C^0$ topology and weakly in $H^1$ topology on all compact subset. Hence we get
$$
\begin{array}{ll}
\ds\int_{\bar B_{\mathbb{R}^{d-1}}(\hat{p},a)} |\bar\nabla \Psi|^2dvol [g_{\mathbb{R}^{d-1}}]\le \liminf \int_{\hat{B}(p,a)\cap \Sigma_r} |\bar\nabla \Psi|^2  dvol [\bar g_r],\\
\ds\int_{B(p,a)\cap \R^{d-1}}\Psi^{2(d-1)/(d-3)}dvol [g_{\mathbb{R}^{d-1}}]=  \int_{B(p,a)\cap \Sigma_r}\Psi^{2(d-1)/(d-3)}dvol [\bar g_r]+o(1)
 \end{array}
$$
which yields the desired results (\ref{eq2.42}) and (\ref{eq2.43}). Therefore, we finish the proof.
\end{proof}

{\it Step 7.} \begin{lemm} On $\R^{d-1}$, $\Psi $ does not vanish.\end{lemm}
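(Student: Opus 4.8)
The plan is to bound $\Psi$ from below by a positive constant along the $\rho$-flow line above each boundary point, and then let the flow line run to the boundary; no maximum principle or analysis of the cut locus is needed here.

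Fix $y\in\R^{d-1}=\p X$. In the adapted collar coordinates $[0,\delta]\times\R^{d-1}$ let $\ell_y$ be the integral curve of $\nabla_{\bar g}\rho/|\nabla_{\bar g}\rho|_{\bar g}$ issuing from $y$, and for $r\ge \log(2/\delta)$ set $x_r:=\ell_y\cap\Sigma_r$, so that $\pi(x_r)=y$ and $\rho(x_r)=2e^{-r}\to 0$. Since $\Psi(x_r)=e^{\frac{d-3}{2}u(x_r)}$ with $u(x_r)=r-d_{g^+}(p,x_r)$, it suffices to produce an estimate $d_{g^+}(p,x_r)\le r+C(y)$ for all large $r$, with $C(y)$ finite. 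To this end, fix once and for all a level $r_*$ with $0<\rho(x_{r_*})<\delta$ and write, by the triangle inequality, $d_{g^+}(p,x_r)\le d_{g^+}(p,x_{r_*})+d_{g^+}(x_{r_*},x_r)$. The first term is a fixed finite interior distance (depending on $y$). The second term is at most the $g^+$-length of the segment of $\ell_y$ between levels $r_*$ and $r$; since $\rho^2g^+$ is an almost $C^2$ geodesic compactification, $\rho^2g^+=d\rho^2+\hat g+O(\rho^2)$ and $|\nabla_{\bar g}\rho|_{\bar g}=1+O(\rho^2)$, this length equals
\[
\int_{\rho(x_r)}^{\rho(x_{r_*})}\frac{d\rho}{\rho\,|\nabla_{\bar g}\rho|_{\bar g}}=\log\frac{\rho(x_{r_*})}{\rho(x_r)}+O(1)=(r-r_*)+O(1),
\]
exactly as in the length computations in the proof of Lemma \ref{Lemma 2.4}. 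Hence $d_{g^+}(p,x_r)\le r+C(y)$, and consequently $\Psi(x_r)=e^{\frac{d-3}{2}(r-d_{g^+}(p,x_r))}\ge e^{-\frac{d-3}{2}C(y)}>0$ for all large $r$.

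It then remains to pass to the limit: as recalled at the beginning of Step 6, $\Psi$ extends to a uniformly bounded, uniformly Lipschitz function on $\overline{X_1}$, and $x_r=(2e^{-r},y)\to(0,y)=y$ in the collar coordinates, so $\Psi(y)=\lim_{r\to\infty}\Psi(x_r)\ge e^{-\frac{d-3}{2}C(y)}>0$; since $y$ was arbitrary, $\Psi$ does not vanish on $\R^{d-1}$. The argument uses only the triangle inequality and the flow-line length estimate, and the sole point requiring attention is that this estimate deliver the sharp form $(r-r_*)+O(1)$ — coefficient $1$, not a larger multiple, in front of $r$ — which is precisely where the normalization $\rho=2e^{-r}$ (i.e. $r=-\log(\rho/2)$) together with the almost-$C^2$ geodesic compactification hypothesis $|\nabla_{\bar g}\rho|_{\bar g}=1+O(\rho^2)$ enters; one also checks, using the $C^0$-convergence $\Psi|_{\Sigma_r}\to\Psi|_{\R^{d-1}}$ established in Step 6, that the Lipschitz extension restricts on $\R^{d-1}$ to the same function $\Psi|_{\R^{d-1}}$ used there.
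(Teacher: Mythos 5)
Your argument is correct, and it takes a mildly different route from the paper's Step 7, yielding in fact a stronger conclusion. The paper works only near the base point: for $x\in\hat{B}(p,2)\cap X_{\rho_1/2}$ it invokes the two-sided estimate $d_{g^+}(p,x)=-\log\rho(x)+O(1)$ from the proof of Lemma \ref{Lemma 2.4}, concludes that $u$ is bounded there, and hence that $\Psi$ has a uniform positive lower bound on a fixed boundary neighborhood of $\hat p$ --- which is all the Sobolev-quotient argument of Step 10 actually requires ($\Psi\not\equiv 0$). You instead work over an arbitrary boundary point $y$ and use only the one-sided bound $t(x_r)\le r+C(y)$, obtained from the triangle inequality plus the $g^+$-length of the $\rho$-flow line over $y$; that length computation, $\int \frac{d\rho}{\rho\,|\nabla_{\bar g}\rho|_{\bar g}}=\log\frac{\rho(x_{r_*})}{\rho(x_r)}+O(1)$, is exactly the vertical-segment estimate already carried out in the proof of Lemma \ref{Lemma 2.4}, and the inputs you need are available: $|\nabla_{\bar g}\rho|_{\bar g}=1+O(\rho^2)$ uniformly on the collar (from (\ref{relation3}) together with the curvature bound, or from the almost geodesic compactification), the collar coordinates being defined by this flow, and the Lipschitz extension of $\Psi$ with $\Psi|_{\Sigma_r}\to\Psi|_{\R^{d-1}}$ in $C^0_{loc}$ from Step 6. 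What your route buys is the pointwise statement $\Psi(y)>0$ for every $y\in\R^{d-1}$ (with a $y$-dependent constant), i.e. the literal nowhere-vanishing claim, with no reference to minimizing geodesics from $p$ or to the cut locus; what it gives up, harmlessly for this lemma, is the uniform lower bound near $\hat p$ that the paper's localization provides. Your care with the normalization $\rho=2e^{-r}$, which is what makes the coefficient of $r$ in the length estimate equal to $1$ rather than a larger multiple, is indeed the one place the argument could have failed, and it is handled correctly.
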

\begin{proof} 
Let $x\in \hat{B}(p,2)\cap X_{\rho_1/2}$ and $\gamma$ be $g^+$ minimizing geodesic from $p$ to $x$.  From the proof of Lemma \ref{Lemma 2.3}, we have 
$$
d_{g^+}(p,x)=-\log \rho(x)+O(1)=O(1)
$$
Hence $u(x)$ is bounded on $ \hat{B}(p,2)\cap X_{\rho_1/2}$. As  a consequence the Lipschitz function $\Psi(x)= e^{(d-3)u(x)/2}$ is bounded below by  some positive constant on $  B_{\bar g}(p,2)\cap X_{\rho_1/2}$. Hence,  $\Psi $ does not vanish on the boundary $\R^{d-1}$.
\end{proof}

{\it Step 8.}  
\begin{lemm} 
For $p\in X$ fixed before and $a>1$ fixed, we have
\beq
\lim_{r\to \infty}vol_{\tilde g_r}(\Sigma_r\cap \hat{B}(p,a))\le \lim_{t\to \infty}vol_{\tilde g_t}(\Gamma_t)\le vol(\S^{d-1}
\eeq
\end{lemm}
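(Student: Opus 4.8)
The plan is to deduce both inequalities in the displayed chain from the Bishop--Gromov volume comparison for the Einstein metric $g^+$, written in geodesic polar coordinates about the fixed point $p$. For the right-hand inequality, I would set up polar coordinates $(s,\theta)\in(0,\infty)\times\mathbb{S}^{d-1}$ about $p$, so that $g^+=ds^2+\hat h_s$, valid and minimizing for $s<c(\theta)$, and write $\mathcal{A}(s,\theta)\,d\theta$ for the volume density of $\Gamma_s=\{t=s\}$ pushed to $\mathbb{S}^{d-1}$, so that $vol_{g^+}(\Gamma_t)=\int_{\{c(\theta)>t\}}\mathcal{A}(t,\theta)\,d\theta$ for a.e. $t$. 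Since $Ric[g^+]=-(d-1)g^+$, the Bishop--Gromov comparison with $\mathbb{H}^d$ shows $s\mapsto \mathcal{A}(s,\theta)/(\sinh s)^{d-1}$ is non-increasing on $(0,c(\theta))$ with limit $1$ at $s=0$, so $\mathcal{A}(s,\theta)\le(\sinh s)^{d-1}$. Setting $B(s,\theta):=(2e^{-s})^{d-1}\mathcal{A}(s,\theta)=\big(\mathcal{A}(s,\theta)/(\sinh s)^{d-1}\big)(1-e^{-2s})^{d-1}$ one gets $0\le B\le 1$, and since $\tilde g=4e^{-2t}g^+$ rescales $(d-1)$-volumes on $\Gamma_t$ by $(2e^{-t})^{d-1}$, we obtain $vol_{\tilde g_t}(\Gamma_t)=\int_{\{c(\theta)>t\}}B(t,\theta)\,d\theta\le vol(\mathbb{S}^{d-1})$; moreover $\mathbf{1}_{\{c(\theta)>t\}}B(t,\theta)$ is dominated by $1$ and converges pointwise a.e. as $t\to\infty$, so $L:=\lim_{t\to\infty}vol_{\tilde g_t}(\Gamma_t)$ exists and $L\le vol(\mathbb{S}^{d-1})$.

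For the left-hand inequality, fix $a>1$. By Lemmas~\ref{Lemma 2.5.1}--\ref{Lemma 2.5.2}, for a.e. large $r$ one has $\mathcal{H}^{d-1}(C_p\cap\Sigma_r\cap\hat{B}(p,a))=0$, so I would work off the cut locus. Because $r$ strictly increases along every minimizing ray from $p$ once that ray enters $X_{\rho_0}$ (Lemma~\ref{Lemma 2.3}), each ray meets $\Sigma_r$ (for $r$ large) at most once, so $\Sigma_r\cap\hat{B}(p,a)\setminus C_p$ is a radial graph: it is parametrized by a region $U_{r,a}\subset\mathbb{S}^{d-1}$ via $\theta\mapsto\exp_p(f_r(\theta)\theta)$ with $f_r(\theta)<c(\theta)$ and $f_r\in C^1$. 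From $g^+=ds^2+\hat h_s$ one computes $vol_{g^+}(\Sigma_r\cap\hat{B}(p,a))=\int_{U_{r,a}}\mathcal{A}(f_r(\theta),\theta)\sqrt{1+|\nabla^{\hat h}f_r|^2}\,d\theta$, and since the unit $g^+$-normal to $\Sigma_r$ is $\nabla_+r/|\nabla_+r|_{g^+}$ while the radial field is $\nabla_+t$, the graph factor equals $\sqrt{1+|\nabla^{\hat h}f_r|^2}=|\nabla_+r|_{g^+}/\phi$, where $\phi=g^+(\nabla_+r,\nabla_+t)$; hence
\[
vol_{\tilde g_r}\big(\Sigma_r\cap\hat{B}(p,a)\big)=\int_{U_{r,a}}B\big(f_r(\theta),\theta\big)\,\frac{|\nabla_+r|_{g^+}}{\phi}\,d\theta .
\]

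Next I would invoke the asymptotics (\ref{Lemma2.2}) and Lemma~\ref{Lemma 2.5}, together with the fact that $s-s_1$ is bounded below by a fixed positive multiple of $r$ along such rays (Lemma~\ref{Lemma 2.4}), to conclude that $|\nabla_+r|_{g^+}/\phi=1+o_r(1)$ uniformly on $\Sigma_r\cap X_{\rho_1}$; and since $d_{g^+}(p,\Sigma_r)\to\infty$, for every fixed $T$ and all $r$ large one has $f_r(\theta)>T$ on $U_{r,a}$, hence $U_{r,a}\subset\{c(\theta)>T\}$ and, by the monotonicity established above, $B(f_r(\theta),\theta)\le \mathcal{A}(f_r(\theta),\theta)/(\sinh f_r(\theta))^{d-1}\le \mathcal{A}(T,\theta)/(\sinh T)^{d-1}=(1-e^{-2T})^{-(d-1)}B(T,\theta)$. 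This gives
\[
vol_{\tilde g_r}\big(\Sigma_r\cap\hat{B}(p,a)\big)\le \big(1+o_r(1)\big)(1-e^{-2T})^{-(d-1)}\,vol_{\tilde g_T}(\Gamma_T),
\]
and letting $r\to\infty$ and then $T\to\infty$ yields $\limsup_{r\to\infty}vol_{\tilde g_r}(\Sigma_r\cap\hat{B}(p,a))\le L$. Since by (\ref{eq2.43}) the left-hand limit exists (it equals $\int_{\bar B_{\mathbb{R}^{d-1}}(\hat{p},a)}\Psi^{2(d-1)/(d-3)}$), chaining with the right-hand inequality gives the claim.

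The step I expect to be the main obstacle is producing the honest radial-graph description of $\Sigma_r$ off the cut locus for a.e. $r$ (this is precisely what Lemmas~\ref{Lemma 2.5.1}--\ref{Lemma 2.5.2} are tailored for) together with the uniformity of $|\nabla_+r|_{g^+}/\phi\to 1$ and $f_r\to\infty$ on $\Sigma_r\cap\hat{B}(p,a)$, which is where the earlier asymptotic analysis of $r$ and $\phi$ is genuinely used; once these are in place the remainder is Bishop--Gromov comparison together with the interchange of the limits $r\to\infty$ and $T\to\infty$.
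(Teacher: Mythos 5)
Your argument is correct, and for the second inequality it coincides with the paper's: Bishop comparison of $\Gamma_t$ against spheres in $\mathbb{H}^d$ (with the factor $(2e^{-t})^{d-1}$ from $\tilde g=4e^{-2t}g^+$ turning $\sinh^{d-1}t$ into $1$ in the limit). For the first inequality, however, you take a genuinely different route. The paper compares $\Sigma_r\cap\hat B(p,a)$ with $\Gamma_t$ by projecting both families of induced metrics to $\partial X=\mathbb{R}^{d-1}$ along the $\rho$-foliation and quoting the estimate $\|\pi_*\tilde g_r-\pi_*\tilde g_t\|\le Ce^{-r}$ (via Lemma \ref{Lemma 2.5} and Dutta's Lemma 6.1), which identifies $\lim_r vol_{\tilde g_r}(\Sigma_r\cap\hat B(p,a))$ with the limit of the volumes of the corresponding pieces of $\Gamma_t$ and then bounds by $vol_{\tilde g_t}(\Gamma_t)$. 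You instead parametrize $\Sigma_r\setminus C_p$ as a radial graph over directions at $p$, identify the tilt factor with $|\nabla_+r|_{g^+}/\phi$, use Lemma \ref{Lemma 2.5} and Lemma \ref{Lemma 2.4} to make it $1+o_r(1)$ uniformly on $\hat B(p,a)$, and then transfer the bound to $\Gamma_T$ at a fixed finite radius $T$ via the Bishop monotonicity of the angular density $\mathcal{A}(s,\theta)/(\sinh s)^{d-1}$, closing with a double limit $r\to\infty$, $T\to\infty$. Both arguments rest on the same inputs ($\phi\to1$, negligibility of the cut locus from Lemmas \ref{Lemma 2.5.1}--\ref{Lemma 2.5.2}, Bishop comparison), but yours buys a self-contained quantitative comparison with an explicit error $(1-e^{-2T})^{-(d-1)}$, avoids the pushforward-metric estimate and the external citation, and actually justifies the existence of $\lim_t vol_{\tilde g_t}(\Gamma_t)$ (which the paper takes for granted); the paper's projection argument yields the stronger statement that the two limits in the chain agree on the cylinder, which it uses in the surrounding equalities but which is not needed for the inequality itself. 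The one step you rightly flag — that for a.e.\ large $r$ each minimizing ray crosses $\Sigma_r\cap\hat B(p,a)$ at most once, so the graph description and the area formula apply off a $\mathcal{H}^{d-1}$-null set — does go through: along the relevant geodesics $s_1$ is bounded in terms of $a$, and $\frac{d}{ds}r(\gamma(s))=\phi(s)\ge\frac12$ for $s\ge s_1$, so the crossing is unique and transversal once $r$ exceeds a constant depending on $a$.
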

\begin{proof}  
We know $(X,g^+)$ is Einstein. Thus, it is smooth and the exponential map is smooth. As same arguments as in Lemma \ref{Lemma 2.5.1}, for almost every $t>0$, 
$$
\mathcal{H}^{d-1}(\Gamma_t\cap C_p)=0.
$$
In view of  Lemma \ref{Lemma 2.5}, we can estimate for any $q\in X_{\rho_1}\setminus C_p$,  we infer (see also \cite[Lemma 6.1]{Dutta})
$$
\|\pi_*\tilde g_r- \pi_*\bar g_t\|_{g_{\mathbb{R}^{d-1}}}\le C e^{-r}
$$
Hence, thanks of Lemma  \ref{Lemma 2.10}, we derive
\beq 
\begin{array}{ll}
&\displaystyle  \int_{\bar B_{\mathbb{R}^{d-1}}(\hat{p},a)}\Psi^{2(d-1)/(d-3)} dvol [g_{\mathbb{R}^{d-1}}]\\
=&\ds\lim_{r\to \infty} vol_{\tilde g_r}(\Sigma_r\cap \hat{B}(p,a))=\lim_{r\to \infty} vol_{\tilde g_r}((\Sigma_r \setminus C_p)\cap \hat{B}(p,a))\\
=& \ds\lim_{t\to \infty}vol_{\tilde g_t}((\Gamma_t \setminus C_p)\cap \hat{B}(p,a)\cap X_{\rho_1})\le \liminf_{t\to \infty}vol_{\tilde g_t}(\Gamma_t )
\end{array}
\eeq
On the other hand, by Bishop's comparison theorem, for all $t>0$, there holds
$$
\frac{vol[g^+](\Gamma_t)}{vol[g^{\mathbb{H}}](\Gamma_t)}\le \frac{vol[g^+](B_{g^+}(p,t)}{vol[g^{\mathbb{H}}](B_{g^{\mathbb{H}}}(0,t))}\le 1
$$
Here we use $\Gamma_t$ to denote geodesic sphere for the various metrics if there is no confusion. Therefore, 
$$
\lim_{t\to \infty}vol_{\tilde g_t}(\Gamma_t)\le \lim_{t\to \infty} e^{(d-1)t}vol[g^{\mathbb{H}}](\Gamma_t)=\lim_{t\to \infty}\frac{e^{(d-1)t}}{\sinh^{d-1}t} vol(\S^{d-1})=vol(\S^{d-1})
$$
Finally, we prove the Lemma.
\end{proof}

{\it Step 9.}  
\begin{lemm}
\label{Lemma 2.13}
For fixed $p\in X$ and any fixed large $a>1$, then for any small $\eta>0$ and for almost all large $r>0$ with $\mathcal {H}^{d-2}(\gamma_{r,a}^{QL})=0$, we have
\beq
\int_{(\Sigma_r\cap \hat{B}(p,a))\setminus(B_{r,\eta}\cup\gamma_{r,a}^{N})}R_{\tilde g_r} dvol_{\tilde g_r}\le (d-1)(d-2) vol_{\tilde g_r}((\Sigma_r\cap \hat{B}(p,a))\setminus(B_{r,\eta}\cup\gamma_{r,a}^{N}))+o_r(1)
\eeq
where $B_{r,\eta}$ is given in Lemma \ref{Lemma 2.10}, $o_r(1)$ is independent of $\eta$ and $o_r(1)\to 0$ as $r\to\infty$.
\end{lemm}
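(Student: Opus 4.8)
The plan is to integrate the pointwise Gauss--Codazzi bound (\ref{eq2.34}) of Lemma \ref{Lemma 2.9} over the region
$$E\ :=\ (\Sigma_r\cap \hat{B}(p,a))\setminus(B_{r,\eta}\cup\gamma_{r,a}^{N}).$$
Since $B_{r,\eta}$ covers $\gamma^{QL}_{r,a}$ and $\gamma^{N}_{r,a}$ has been removed, $E$ is disjoint from $C_p$, so $t$ is smooth on $E$ and (\ref{eq2.34}) holds at every point of $E$. Writing $\tfrac1{1-e^{-2t}}=1+\tfrac{e^{-2t}}{1-e^{-2t}}$ and integrating over $E$ against $dvol_{\tilde g_r}$, the asserted inequality reduces (the term $(d-1)(d-2)\,vol_{\tilde g_r}(E)$ cancelling on both sides) to showing that, up to the harmless constants $(d-1)(d-2),\ \tfrac{d-2}{2},\ 1$, the three integrals
\begin{gather*}
\int_{E}\frac{e^{-2t}}{1-e^{-2t}}\,dvol_{\tilde g_r},\qquad \int_{E}e^{2t}O(e^{-4r})\,dvol_{\tilde g_r},\\
\int_{E}e^{2t}\Big(1-\tfrac{\phi^2}{|\nabla_+ r|^2_{g^+}}\Big)\big(1-\nabla_+^2 t(\nu,\nu)\big)\,dvol_{\tilde g_r}
\end{gather*}
are each $o_r(1)$ as $r\to\infty$, with implied constants allowed to depend on the fixed $a$ but not on $\eta$. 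Along the minimizing $g^+$-geodesic $\gamma$ from $p$ to a point $x=\gamma(s)\in E$, Lemma \ref{Lemma 2.4} gives $t(x)=s=r+s_1+O(1)$ and $u(x)=-s_1+O(1)$, hence $t\to\infty$ uniformly on $E$, $e^{2t(x)}=O(e^{2r+2s_1})$, and, since $\Psi=e^{\frac{d-3}{2}u}$, $\Psi(x)^{\frac{2(d-1)}{d-3}}=O(e^{-(d-1)s_1})$.

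In each integral I rewrite $dvol_{\tilde g_r}=\Psi^{\frac{2(d-1)}{d-3}}dvol_{\bar g_r}$ and use $vol_{\bar g_r}(E)\le vol_{\bar g_r}(\Sigma_r\cap\hat B(p,a))=O_a(1)$, valid because $(\Sigma_r,\bar g_r)\to(\mathbb R^{d-1},g_{\mathbb R^{d-1}})$. The first integrand is $O(e^{-2t})\,\Psi^{\frac{2(d-1)}{d-3}}=O(e^{-2r}e^{-(d+1)s_1})=O(e^{-2r})$ (as $s_1\ge0$), so that integral is $O_a(e^{-2r})=o_r(1)$; the second has integrand $O(e^{2r+2s_1})\cdot O(e^{-4r})\cdot O(e^{-(d-1)s_1})=O(e^{-2r}e^{(3-d)s_1})=O(e^{-2r})$, since $d\ge5$ makes $3-d\le0$, hence is again $o_r(1)$. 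For the third, Lemma \ref{Lemma 2.5} gives $\phi=1+O(e^{-2(s-s_1)})$ and Lemma \ref{Lemma2.2} gives $|\nabla_+ r|^2_{g^+}=1+O(e^{-2r})$; with $s-s_1=r+O(1)$ this yields $1-\tfrac{\phi^2}{|\nabla_+ r|^2_{g^+}}=O(e^{-2r})$, and multiplying by $e^{2t}=O(e^{2r+2s_1})$ and $\Psi^{\frac{2(d-1)}{d-3}}=O(e^{-(d-1)s_1})$ leaves an integrand of size $O(e^{(3-d)s_1})\,|1-\nabla_+^2 t(\nu,\nu)|=O(1)\,|1-\nabla_+^2 t(\nu,\nu)|$, again using $d\ge5$. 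Thus it remains only to prove that $1-\nabla_+^2 t(\nu,\nu)\to0$ as $r\to\infty$, uniformly over $E$. This is precisely where the hypothesis $d\ge5$ enters: it is exactly what lets the weight $\Psi^{\frac{2(d-1)}{d-3}}$ absorb the factor $e^{2t}\sim e^{2r}$ against the $\Theta(e^{-2r})$ factor $1-\tfrac{\phi^2}{|\nabla_+ r|^2_{g^+}}$.

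The remaining and main point is this uniform estimate. Since $E\subset X\setminus C_p$, the geodesic $\gamma$ has no conjugate point up to $x$, so the shape operator $S(\tau):=\nabla_+^2 t|_{\nabla_+ t^\perp}$ of the geodesic sphere $\{t=\tau\}$ along $\gamma$, for $\tau\in[0,s]$, solves the matrix Riccati equation $S'+S^2+\mathcal R_{\gamma'}=0$ with $\mathcal R_{\gamma'}(v)=R^+(v,\gamma')\gamma'$, and $S$ remains bounded along $\gamma$ by the curvature hypothesis together with the no-conjugate-point condition. Because $g^+$ is conformally compact Einstein, its curvature tensor tends to that of hyperbolic space near the conformal infinity, so $\mathcal R_{\gamma'}(\tau)=-\mathrm{Id}+O(\rho(\gamma(\tau))^{\mu})$ for some $\mu>0$, while $\rho(\gamma(\tau))=O(e^{-(\tau-s_1)})$ for $\tau\ge s_1$ by Lemma \ref{Lemma 2.4}. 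Comparing $S$ with the hyperbolic model $S_0(\tau)=\coth(\tau)\,\mathrm{Id}$ on $[s_1,s]$ and estimating the inhomogeneous term (whose contribution is dominated by the endpoint $\tau=s$, where $\rho(\gamma(\tau))=\rho(x)=O(e^{-r})$) gives $\|S(s)-\coth(s)\,\mathrm{Id}\|=O(e^{-\min(\mu,2)r})$, uniformly over $E$; hence $1-\nabla_+^2 t(\nu,\nu)=1-\coth s+O(e^{-\min(\mu,2)r})=O(e^{-\min(\mu,2)r})$, which makes the third integral $O_a(e^{-\min(\mu,2)r})\,vol_{\bar g_r}(E)=o_r(1)$. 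All three error integrals are then $o_r(1)$ independently of $\eta$, and combining with the first paragraph proves the lemma. The hard part is exactly this Riccati comparison, together with its uniformity over the geodesics meeting $E$: the factor $e^{2t}$ is of order $e^{2r}$ while $1-\tfrac{\phi^2}{|\nabla_+ r|^2_{g^+}}$ is only of order $e^{-2r}$, so all of the needed decay must be produced by controlling the second fundamental form of the \emph{geodesic} spheres $\{t=\tau\}$ --- a quantity not governed by the $C^{2,\alpha}$-compactification estimates of Lemma \ref{Lemma2.2}, which concern the level sets of $\rho$ rather than of $t$.
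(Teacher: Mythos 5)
Your reduction of the lemma to the three error integrals coming from (\ref{eq2.34}) is consistent with what the paper does (its Step d likewise starts from (\ref{eq2.34}) and the bound $R_{\tilde g_r}\le (d-1)(d-2)+C(1-\nabla_+^2t(\nu,\nu))+o_r(1)$, noting that $u$, hence $\Psi$, is bounded on $\hat B(p,a)$). The gap is in your main step: the claim that the Riccati comparison yields $\|S(s)-\coth(s)\,\mathrm{Id}\|=O(e^{-\min(\mu,2)r})$, i.e. $1-\nabla_+^2t(\nu,\nu)\to 0$ \emph{uniformly} over $E=(\Sigma_r\cap\hat B(p,a))\setminus(B_{r,\eta}\cup\gamma^N_{r,a})$. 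Removing $\gamma^N_{r,a}$ and the small balls $B_{r,\eta}$ (whose radii only satisfy $\sum_i r_i^{d-2}\le\eta$) does not keep the points of $E$ at any definite distance from the cut locus $C_p$; in particular $E$ contains points lying just before conjugate cut points. Along the minimizing geodesic to such a point, the Riccati equation $\mu'+\mu^2=1+O(e^{-2s})$ gives a uniform \emph{upper} bound on the largest principal curvature ($\mu_M\le 1+C(t+1)e^{-2t}$, the paper's Step a), but no uniform \emph{lower} bound on $\mu_m$: once $\mu_m$ drops below $-1$ it is driven toward $-\infty$ in finite time, and the absence of a conjugate point before $s$ only means the blow-down has not yet happened at $s$ — $\mu_m(s)$, hence $1-\nabla_+^2t(\nu,\nu)$, can be arbitrarily large on $E$. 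So the asserted uniform estimate is false in general, and with it the argument for the third integral collapses; note also that the factor you need to beat is $e^{2t}\big(1-\phi^2/|\nabla_+r|^2\big)=O(1)$, not small, so some genuine control of the bad set is unavoidable.

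The paper (following Li--Qing--Shi) replaces your pointwise claim by a measure-theoretic one: it splits $E$ according to the mean curvature $H=\triangle_+t$ into $\{H>(d-1)(1-\kappa)\}$, $\{-2(d-1)<H\le(d-1)(1-\kappa)\}$ and $\{H\le-2(d-1)\}$. On the first set the upper Riccati bound forces all principal curvatures to be close to $1$, so $1-\nabla_+^2t(\nu,\nu)=O(\kappa)+o_r(1)$ pointwise; on the second set one shows the $\tilde g_r$-volume tends to $0$ as $r\to\infty$ (LQS Proposition 6.5); on the third set one uses the integral estimate on $H\,dvol[\tilde g_t]$ (LQS Proposition 6.7) together with $\nabla_+^2t(\nu,\nu)\ge\mu_m\ge H-C$. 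If you want to rescue your approach you would need to supply an argument of this type (small volume of the region where the shape operator is far from the identity), since a uniform Riccati comparison over all of $E$ is not available.
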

\begin{proof} The proof is similar as that one \cite[Theorem 6.8]{LQS}. We sketch the proof.  Let $\gamma(s)$ be a minimizing geodesic w.r.t. $g^+$ connecting $p$ to $q\in \hat{B}(p,A)\cap X_{\rho_1/2}$. Hence the corresponding $s_1$ with $\gamma(s_1)\in \p X_{\rho_1}$ is bounded above by some constant $C>0$ depending on $A$ so that $u$ is bounded by Lemma \ref{Lemma 2.4}.  Moreover, there exists $C>0$ such that $\forall s>s_1$ we have $d_{\bar g}(p, \gamma(s))\le C$.  We denote $S=\nabla_+^2 t$ the shape operator of the geodesic sphere along $\gamma(s)$. $S$ satisfies the following Riccati equation
\beq
\label{Riccati1}
 \nabla_{\nabla_+ t} S+ S^2=-(R_{\nabla t})[g^+]
\eeq
 where $ (R_{\nabla t})[g^+](\nu)=(R(\nu,\nabla t)\nabla t[g^+]$ for any tangent vector $\nu$ orthogonal to $\gamma'( s)$. Therefore, for all $s>s_1$ and the principal curvature $\mu$, we have 
 $$
 1-Ce^{-2(s-s_1)}\le \mu'(s)+  \mu^2(s)\le 1+ Ce^{-2(s-s_1)}
 $$
 which gives
\beq
\label{Riccati2}
1-C_0e^{-2s}\le \mu'(s)+  \mu^2(s)\le 1+ C_0e^{-2s}
\eeq 
where $C_0>0$ is some constant depending on $R$. Let us denote by $\mu_m$ and $\mu_M$ the smallest and the biggest principal curvature of the geodesic sphere $\Gamma_t$ respectively.  Remark $H=\triangle t$ is the mean curvature of the geodesic sphere $\Gamma_t$. The main idea in \cite{LQS} is the following: when $H$ is close to $d-1$,  all principal curvatures are close to 1; when $H$ is away from $d-1$, the integral on the such set $\int (1-\mu_m) \le o_r(1)$.  The key ingredients are based on a careful analysis of Riccati equations (\ref{Riccati1}) and (\ref{Riccati2}). The proof is divided in several steps. We set $t_0=s_0$ and $t_1=s_1$.\\
{\it Step a. } Given a large $a>0$ and $t_0$ with $(\Gamma_{t_0}\setminus C_p)\cap \hat{B}(p,a)\cap X_{\rho_1}\neq \emptyset$, there exists some $C>0$ such that for all $q\in (\Gamma_{t_0}\setminus C_p)\cap \hat{B}(p,a)\cap X_{\rho_1}$ we have (see \cite[Lemma6.1]{LQS})
\beq
\label{eq2.49}
\mu_M(q)\le C
\eeq
Moreover $\forall t>t_0$, we have for all $q\in (\Gamma_{t}\setminus C_p)\cap \hat{B}(p,A)\cap X_{\rho_1}$
\beq
\label{eq2.50}
\mu_M(q)\le 1+C(t+1)e^{-2t}
\eeq
{\it Step b. } For fixed $p\in X$ and any fixed large $a>1$, given any $1>\kappa>0$ small fixed, we set for large $r>-\log \rho_1$
\beq
U_{r,a}^\kappa=\{q\in (\Sigma_r\setminus C_p)\cap \hat{B}(p,a)\;: \;H(q)\le (d-1)(1-\kappa)\}
\eeq
Then we have $\ds \int_{U_{r,a}^\kappa} d vol[\tilde g_r]\to 0$ as $r\to\infty$ (see \cite[Proposition 6.5]{LQS}). \\
{\it Step c. } For fixed $p\in X$ and any fixed large $a>1$, given any $1>\kappa>0$ small fixed, there exists a constant $C>0$ such that for $q\in (\Gamma_t\setminus  C_p)\cap \hat{B}(p,a)$ with $H(q)\le -2(d-1)$ and $\gamma$ being the minimizing geodesic from $p$ to $q$, there holds
\beq
\label{eq2.52}
Hdvol[\tilde g_t]\ge Ce^{-\frac{\kappa t}{4}}
\eeq
provided $t>t_\kappa+1$ where $t_\kappa$ is some positive number depending on $\kappa, a$. (see \cite[Proposition 6.7]{LQS})\\
{\it Step d. }  Recall $u$ is bounded on $\hat{B}(p,a)\setminus C_p$. By Lemma \ref{Lemma 2.5}, it follows from (\ref{eq2.34}) on $\hat{B}(p,a)\setminus C_p$
\beq
\label{eq2.53}
R_{\tilde g_r}\le (d-1)(d-2)+C(1-\nabla_+^2 t(\nu,\nu))+o_r(1)
\eeq
where $C>0$ is some constant independent of $r$ and $o_r(1)\to 0$ as $r\to \infty$. On the other hand, by (\ref{eq2.50}),  we have on $\hat{B}(p,a)\setminus C_p$
\beq
\label{eq2.54}
\nabla_+^2 t(\nu,\nu)\ge \mu_m\ge H-C
\eeq
Fix a small $\kappa>0$ and $\eta>0$. We divide the set $(\Sigma_r\cap \hat{B}(p,a))\setminus(B_{r,\eta}\cup\gamma_{r,a}^{N})$ into three subsets
$$
(\Sigma_r\cap \hat{B}(p,a))\setminus(B_{r,\eta}\cup\gamma_{r,a}^{N}):= A_{\kappa, \eta, 1}\cup A_{\kappa, \eta, 2}\cup A_{\kappa, \eta, 3}
$$
where 
$$
\begin{array}{ll}
A_{\kappa, \eta, 1}:=\{q\in (\Sigma_r\cap \hat{B}(p,a))\setminus(B_{r,\eta}\cup\gamma_{r,a}^{N}): \; H(q)>(d-1)(1-\kappa)\}\\
A_{\kappa, \eta, 2}:=\{q\in (\Sigma_r\cap \hat{B}(p,a))\setminus(B_{r,\eta}\cup\gamma_{r,a}^{N}): \; -2(d-1)<H(q)\le (d-1)(1-\kappa)\}\\
A_{\kappa, \eta, 2}:=\{q\in (\Sigma_r\cap \hat{B}(p,a))\setminus(B_{r,\eta}\cup\gamma_{r,a}^{N}): \; H(q)\le -2(d-1)\}
\end{array}
$$
On $A_{\kappa, \eta, 1}$, we infer by  (\ref{eq2.50}) 
$$
1-\nabla_+^2 t(\nu,\nu)\le 1-\mu_m\le 1-H+(d-2)\mu_M \le o_r(1)+O(\kappa). 
$$
On  $A_{\kappa, \eta, 2}\cup A_{\kappa, \eta, 3}$, we use (\ref{eq2.54}). Thus
we can estimate by (\ref{eq2.52}) to (\ref{eq2.54})
$$
\begin{array}{ll}
\ds\int_{A_{\kappa, \eta, 1}}  R_{\tilde g_r} dvol_{\tilde g_r}&\le \ds(d-1)(d-2) \int_{A_{\kappa, \eta, 1}}   dvol_{\tilde g_r} +O(\kappa)+ o_r(1)\\
\ds\int_{A_{\kappa, \eta, 2}}  R_{\tilde g_r} dvol_{\tilde g_r}&\le \ds(d-1)(d-2) \int_{A_{\kappa, \eta, 2}}   dvol_{\tilde g_r} +Cvol[\tilde g_r](U_{r,A}^\kappa) + o_r(1)\\
&=\ds (d-1)(d-2) \int_{A_{\kappa, \eta, 2}}   dvol_{\tilde g_r} + o_r(1)\\
\ds\int_{A_{\kappa, \eta, 3}}  R_{\tilde g_r} dvol_{\tilde g_r}&\le \ds4(d-1)(d-2) \int_{A_{\kappa, \eta, 3}}   dvol_{\tilde g_r} +Cvol[\tilde g_r](U_{r,A}^\kappa) + o_r(1)\\
&+\ds \int_{A_{\kappa, \eta, 3}}   Hdvol_{\tilde g_r}\\
&=\ds (d-1)(d-2) \int_{A_{\kappa, \eta, 3}}   dvol_{\tilde g_r} +O(\kappa)+ o_r(1)
\end{array}
$$
which yields 
$$
\int_{(\Sigma_r\cap \hat{B}(p,a))\setminus(B_{r,\eta}\cup\gamma_{r,a}^{N})}R_{\tilde g_r} dvol_{\tilde g_r}\le (d-1)(d-2) \int_{(\Sigma_r\cap \hat{B}(p,a))\setminus(B_{r,\eta}\cup\gamma_{r,a}^{N})}dvol_{\tilde g_r}+O(\kappa)+ o_r(1)
$$
As $\kappa>0$ could be chosen sufficiently small, we get the desired result. Hence, Lemma is proved.
\end{proof}

{\it Step 10.}
\begin{lemm}
$g$ is flat
\end{lemm}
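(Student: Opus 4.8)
The plan is to combine all the ingredients assembled in Steps 1--9 into a sharp Sobolev-type inequality on $\mathbb{R}^{d-1}$ that forces equality, and then read off flatness from the equality case. Recall that $\Psi = e^{\frac{d-3}{2}u}$ restricts to a positive, bounded, Lipschitz function on $\mathbb{R}^{d-1} = \partial X$ (Steps 3, 7). The chain of inequalities we have is: by Lemma \ref{Lemma2.8}, Lemma \ref{Lemma 2.10}, Lemma \ref{Lemma 2.13}, and Step 8, for any fixed large $a$, small $\eta$, and almost every large $r$,
\beq
\frac{4(d-2)}{d-3}\int_{\bar B_{\mathbb{R}^{d-1}}(\hat p,a)} |\bar\nabla\Psi|^2\, dvol[g_{\mathbb{R}^{d-1}}]
\le (d-1)(d-2)\, vol_{\tilde g_r}(\Sigma_r\cap\hat B(p,a)) + O(\eta) + O\!\left(\tfrac{(2\ln a)^{2(d-3)}}{a^{(2(d-3)/(1+\varepsilon/2))-(d-2)}}\right) + o_r(1).
\eeq
First I would let $r\to\infty$ and use (\ref{eq2.43}) to replace the volume term by $\int_{\bar B_{\mathbb{R}^{d-1}}(\hat p,a)}\Psi^{2(d-1)/(d-3)}\,dvol[g_{\mathbb{R}^{d-1}}]$, then send $\eta\to 0$, obtaining
\beq
\frac{4(d-2)}{d-3}\int_{\bar B_{\mathbb{R}^{d-1}}(\hat p,a)} |\bar\nabla\Psi|^2 \le (d-1)(d-2)\int_{\bar B_{\mathbb{R}^{d-1}}(\hat p,a)}\Psi^{\frac{2(d-1)}{d-3}} + O\!\left(\tfrac{(2\ln a)^{2(d-3)}}{a^{(2(d-3)/(1+\varepsilon/2))-(d-2)}}\right).
\eeq
Since $\varepsilon < \tfrac12$, the exponent $\tfrac{2(d-3)}{1+\varepsilon/2} - (d-2)$ is strictly positive for $d\ge 5$ (one checks $\tfrac{2(d-3)}{1+\varepsilon/2} > \tfrac{2(d-3)}{1.25} = 1.6(d-3) > d-2$ when $d\ge 5$), so the error term vanishes as $a\to\infty$; hence on all of $\mathbb{R}^{d-1}$,
\beq
\label{eq:finalSobolev}
\frac{4(d-2)}{d-3}\int_{\mathbb{R}^{d-1}} |\nabla\Psi|^2\, dvol \le (d-1)(d-2)\int_{\mathbb{R}^{d-1}}\Psi^{\frac{2(d-1)}{d-3}}\, dvol.
\eeq

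Next I would invoke the sharp Sobolev inequality on $\mathbb{R}^{n}$ with $n=d-1$: for $\Psi\in \dot H^1$,
\beq
\left(\int_{\mathbb{R}^{d-1}}\Psi^{\frac{2(d-1)}{d-3}}\right)^{\frac{d-3}{d-1}} \le \frac{1}{(d-3)(d-1)\,\omega_{d-1}^{2/(d-1)}}\cdot\frac{4}{?}\int_{\mathbb{R}^{d-1}}|\nabla\Psi|^2,
\eeq
with the precise constant being the one for which the Yamabe/round-sphere extremals $\Psi(y) = c(1+|y|^2)^{-(d-3)/2}$ achieve equality. The point is that (\ref{eq:finalSobolev}) is exactly the reverse of the sharp Sobolev inequality \emph{once one also uses the volume bound} $\int_{\mathbb{R}^{d-1}}\Psi^{2(d-1)/(d-3)} = \lim_r vol_{\tilde g_r}(\Gamma_r) \le vol(\mathbb{S}^{d-1})$ from Step 8 to pin down the normalization: combining the sharp Sobolev inequality, (\ref{eq:finalSobolev}), and the volume constraint forces $\Psi$ to be (a rescaling of) the standard bubble, and in particular forces \emph{equality} in the Sobolev inequality and equality $\int\Psi^{2(d-1)/(d-3)} = vol(\mathbb{S}^{d-1})$. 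Tracking equality backwards through Lemma \ref{Lemma 2.13} forces all principal curvatures of the geodesic spheres to converge to $1$, i.e. $\nabla_+^2 t(\nu,\nu)\to 1$, which by (\ref{scalarest}) and the Gauss--Codazzi computation forces $R_{\tilde g_r}\to (d-1)(d-2)$ uniformly; combined with Bishop rigidity (equality in $vol[g^+](\Gamma_t)/vol[g^{\mathbb{H}}](\Gamma_t) = 1$ from Step 8) this yields that $(X,g^+)$ is isometric to hyperbolic space. Then $\tilde g = e^{-2t}g^+$ is the flat metric on $\mathbb{R}^d_+$ and, since $\bar g = g = \Psi^{-4/(d-3)}\tilde g$ with $\Psi\equiv\text{const}$ on $\mathbb{R}^{d-1}$, one concludes $g$ is flat.

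I expect the main obstacle to be the equality/rigidity analysis rather than the inequality chain. Establishing (\ref{eq:finalSobolev}) is essentially bookkeeping of the error terms already controlled in Steps 4--9 (the key arithmetic being that $\varepsilon<\tfrac12$ makes the boundary error decay), but extracting \emph{rigidity} is delicate: one must verify that the inequalities in Lemma \ref{Lemma 2.10} (the discarded boundary terms over $\gamma^N_{r}$, the sign $-\tfrac{d-3}{2}\Psi^2(\partial_{n^+}t + \partial_{n^-}t)\le 0$) and in Lemma \ref{Lemma 2.13} (the splitting into $A_{\kappa,\eta,i}$ and the Riccati estimates) are \emph{all} asymptotically sharp, which means the cut locus contributes negligibly, the mean curvature $H\to d-1$ on almost the entire geodesic sphere, and hence the shape operator $\nabla_+^2 t \to \mathrm{Id}$ on the orthogonal complement. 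Feeding this back into (\ref{eq2.21})--(\ref{eq2.22}) shows $\nabla_+^2 r \to g^+$ with no remainder, which together with the Bishop volume rigidity identifies $g^+$ with the hyperbolic metric; the standard Obata/Cheng-type rigidity then finishes. An alternative, cleaner route — which I would pursue if the direct equality analysis proves too technical — is to first show via (\ref{eq:finalSobolev}) and Step 8 that $u$ is constant on $\mathbb{R}^{d-1}$ (so $\Psi$ is constant there), whence $r - t$ extends continuously to the boundary as a constant, then use the maximum principle on $u$ together with the asymptotics of Lemma \ref{Lemma 2.5} and the fact that $\Delta_{g^+} u$ has a sign to propagate $u\equiv\text{const}$ into the interior, giving $t = r + \text{const}$ globally and hence $g^+$ conformal to the flat cylinder metric in the exact form characterizing hyperbolic space.
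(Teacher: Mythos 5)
Your main chain is essentially the paper's own argument: combining the integration-by-parts inequality of Step 6, the scalar curvature bound of Step 9, the lateral boundary estimate of Step 4, and the volume bound of Step 8, then sending $r\to\infty$, $\eta\to 0$, $a\to\infty$ (your arithmetic check that $\tfrac{2(d-3)}{1+\varepsilon/2}-(d-2)>0$ for $d\ge 5$, $\varepsilon<\tfrac12$ is exactly what kills the lateral error), one saturates the sharp Sobolev inequality on $\R^{d-1}$ applied to $\Psi$. The paper phrases this as the Yamabe-type quotient of $\Psi$ being bounded above by $\tfrac{(d-1)(d-3)}{4}\,vol(\S^{d-1})^{2/(d-1)}$, which is equivalent to your pair of inequalities $\tfrac{4(d-2)}{d-3}\int|\bar\nabla\Psi|^2\le (d-1)(d-2)\int\Psi^{2(d-1)/(d-3)}$ and $\int\Psi^{2(d-1)/(d-3)}\le vol(\S^{d-1})$ (the constant in your Sobolev display, left with a ``?'', must of course be the sharp one). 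The rigidity step is then exactly your parenthetical remark and nothing more: saturation forces $\lim_{t}vol_{g^+}(\Gamma_t)/vol_{g^{\mathbb H}}(\Gamma_t)=1$, and Bishop--Gromov monotonicity plus its equality case give $vol_{g^+}(B(p,t))=vol_{g^{\mathbb H}}(B(p,t))$ for all $t$, hence $g^+$ is hyperbolic. The delicate ``equality tracking'' through the Riccati/cut-locus estimates that you single out as the main obstacle is not needed and is not done in the paper; the volume rigidity shortcut suffices, and you already have all its ingredients (Step 7 guarantees $\Psi\not\equiv 0$, so a strict volume deficit would contradict the sharp Sobolev inequality).

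One concrete error in your closing, worth flagging because it is the step that turns ``$g^+$ hyperbolic'' into ``$g$ flat'': in the equality case $\Psi$ is an Aubin--Talenti bubble, not a constant --- a nonzero constant is not even in $L^{2(d-1)/(d-3)}(\R^{d-1})$, and in the model half-space case one computes $u|_{\R^{d-1}}=\log\tfrac{2}{1+|y|^2}$, so $\Psi|_{\R^{d-1}}=\bigl(\tfrac{2}{1+|y|^2}\bigr)^{(d-3)/2}$. Consequently your last sentence ($\tilde g$ flat on $\R^d_+$ and $\Psi\equiv\mathrm{const}$) is internally inconsistent with the extremality you just derived, and the proposed ``alternative cleaner route'' via $u\equiv\mathrm{const}$ on the boundary cannot work. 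The correct finish is short: once $g^+$ is hyperbolic space with conformal infinity $(\R^{d-1},g_E)$, in the upper half-space model $v=x_1^{(d-4)/2}$ solves (\ref{fg}), so the adapted metric is $x_1^2g^+=g_E$; this repair is local to your last two sentences, and the main mechanism of your proof stands and coincides with the paper's.
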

\begin{proof}
Gathering Lemmas (\ref{Lemma 2.10}) to (\ref{Lemma 2.13}), for fixed $R>1$ and $\eta>0$ there holds for almost every $r$ large
\beq
\begin{array}{ll}
&\frac{\ds \frac{4(d-2)}{d-3}\int_{\bar B_{\R^{d-1}}(\hat{p},a)} |\bar\nabla \Psi |^2dvol [g_{\mathbb{R}^{d-1}}]+ O(\eta)+ O(\frac{(2\ln a)^{2(d-3)}}{a^{(2(d-3)/(1+\varepsilon/2))-(d-2)}})+o_r(1)}{ \ds (\int_{\bar B_{\R^{d-1}}(\hat{p},a)}\Psi^{2(d-1)/(d-3)}dvol [g_{\mathbb{R}^{d-1}}]+ O(\eta) +o_r(1))^{\frac{d-3}{d-1}}}\\
\le &\frac{\ds  \int_{(\Sigma_r\cap \hat{B}(p,a))\setminus (B_{r,\eta}\cup \gamma_{r,a}^N)}R_{\tilde g_r} dvol_{\tilde g_r}}{\ds vol_{\tilde g_r}(\ds(\Sigma_r\cap \hat{B}(p,a))\setminus (B_{r,\eta}\cup \gamma_{r,a}^N))}\\
=&(d-1)(d-2) vol_{\tilde g_r}((\Sigma_r\cap \hat{B}(p,a))\setminus(B_{r,\eta}\cup\gamma_{r,a}^{N}))^{\frac{2}{d-1}}+o_r(1)\\
=&(d-1)(d-2) vol_{\tilde g_r}(\Sigma_r\cap \hat{B}(p,A))^{\frac{2}{d-1}}+o_r(1)+o(\eta)\\
\le &(d-1)(d-2) (vol(\S^{d-1}))^{\frac{2}{d-1}}+o_r(1)+o(\eta)
\end{array}
\eeq
Taking the limit successively as $r\to\infty$, $\eta\to 0$ and $a\to \infty$, we derive
\beq
\int_{\R^{d-1}} |\bar\nabla \Psi |^2dvol [g_{\mathbb{R}^{d-1}}]/( \int_{\R^{d-1}}\Psi^{2(d-1)/(d-3)}dvol [g_{\mathbb{R}^{d-1}}])^{\frac{d-3}{d-1}}\le \frac{(d-1)(d-3)}{4}vol(\S^{d-1})^{\frac{2}{d-1}}
\eeq
Recall for any $v\in H^1(\R^{d-1})$
\beq
\int_{\R^{d-1}} |\bar\nabla v |^2/( \int_{\R^{d-1}}|v|^{2(d-1)/(d-3)})^{\frac{d-3}{d-1}}\ge \frac{(d-1)(d-3)}{4}vol(\S^{d-1})^{\frac{2}{d-1}}
\eeq
Thus, $\Psi$ is extremal function for the Sobolev embedding. Hence $g$ is flat. To see this, we have
\beq
\frac{(d-1)(d-3)}{4}vol(\S^{d-1})^{\frac{2}{d-1}}= \frac{(d-1)(d-3)}{4}vol(\S^{d-1})^{\frac{2}{d-1}}\lim_{t\to \infty}\frac{vol_{ g^+}(\Gamma_t)^{\frac{2}{d-1}}}{vol_{ g^\H}(\Gamma_t)^{\frac{2}{d-1}}}
\eeq
which implies for all $t>0$ by Bishop's comparison theorem
\beq
vol_{g^+}(B(p,t))=vol_{g^\H}(B(p,t))
\eeq
Hence $g^+$ is hyperbolic space form. 
\end{proof}

\begin{RK}

\begin{enumerate} 
Under more careful computations,
    \item one can improve the upper bound for $\varepsilon$ in Theorem \ref{Liouville}.
    \item One can replace condition (3) in Theorem \ref{Liouville}, by a weaker assumption that is,
    $$
    \limsup_{x\to\infty}|Ric(x)|\rho(x)^2<\frac{1}{2}.
    $$
\end{enumerate}
\end{RK}

\end{document}